\newcommand*\linenomathpatch[1]{%
  \cspreto{#1}{\linenomath}%
  \cspreto{#1*}{\linenomath}%
  \csappto{end#1}{\endlinenomath}%
  \csappto{end#1*}{\endlinenomath}%
}
\newcommand*\linenomathpatchAMS[1]{%
  \cspreto{#1}{\linenomathAMS}%
  \cspreto{#1*}{\linenomathAMS}%
  \csappto{end#1}{\endlinenomath}%
  \csappto{end#1*}{\endlinenomath}%
}
  \let\linenomathAMS\linenomathWithnumbers
  \patchcmd\linenomathAMS{\advance\postdisplaypenalty\linenopenalty}{}{}{}
  \let\linenomathAMS\linenomathNonumbers
\patchcmd{\mmeasure@}{\measuring@true}{
  \measuring@true
  \ifnum-\linenopenaltypar>\interdisplaylinepenalty
    \advance\interdisplaylinepenalty-\linenopenalty
  \fi
  }{}{}
\tikzset{every picture/.style={line width=0.11mm}}
\newcommand{\operpsymbol}{\begin{tikzpicture}[scale=0.112]
  \draw (0,-0.5)--(0,1); \draw (-0.866,-0.5)--(0.866,-0.5);
  \draw (0,0) circle [radius=1];
\end{tikzpicture}}
\newcommand{\operp}{\mathbin{\raisebox{-1pt}{\operpsymbol}}}
\newcommand{\vertiii}[1]{{\left\vert\kern-0.25ex\left\vert\kern-0.25ex\left\vert #1 
    \right\vert\kern-0.25ex\right\vert\kern-0.25ex\right\vert}}
\let\save@mathaccent\mathaccent
\newcommand*\if@single[3]{%
  \setbox0\hbox{${\mathaccent"0362{#1}}^H$}%
  \setbox2\hbox{${\mathaccent"0362{\kern0pt#1}}^H$}%
  \ifdim\ht0=\ht2 #3\else #2\fi
  }
\newcommand*\rel@kern[1]{\kern#1\dimexpr\macc@kerna}
\newcommand*\widebar[1]{\@ifnextchar^{{\wide@bar{#1}{0}}}{\wide@bar{#1}{1}}}
\newcommand*\wide@bar[2]{\if@single{#1}{\wide@bar@{#1}{#2}{1}}{\wide@bar@{#1}{#2}{2}}}
\newcommand*\wide@bar@[3]{%
  \begingroup
  \def\mathaccent##1##2{%
    \let\mathaccent\save@mathaccent
    \if#32 \let\macc@nucleus\first@char \fi
    \setbox\z@\hbox{$\macc@style{\macc@nucleus}_{}$}%
    \setbox\tw@\hbox{$\macc@style{\macc@nucleus}{}_{}$}%
    \dimen@\wd\tw@
    \advance\dimen@-\wd\z@
    \divide\dimen@ 3
    \@tempdima\wd\tw@
    \advance\@tempdima-\scriptspace
    \divide\@tempdima 10
    \advance\dimen@-\@tempdima
    \ifdim\dimen@>\z@ \dimen@0pt\fi
    \rel@kern{0.6}\kern-\dimen@
    \if#31
      \overline{\rel@kern{-0.6}\kern\dimen@\macc@nucleus\rel@kern{0.4}\kern\dimen@}%
      \advance\dimen@0.4\dimexpr\macc@kerna
      \let\final@kern#2%
      \ifdim\dimen@<\z@ \let\final@kern1\fi
      \if\final@kern1 \kern-\dimen@\fi
    \else
      \overline{\rel@kern{-0.6}\kern\dimen@#1}%
    \fi
  }%
  \macc@depth\@ne
  \let\math@bgroup\@empty \let\math@egroup\macc@set@skewchar
  \mathsurround\z@ \frozen@everymath{\mathgroup\macc@group\relax}%
  \macc@set@skewchar\relax
  \let\mathaccentV\macc@nested@a
  \if#31
    \macc@nested@a\relax111{#1}%
  \else
    \def\gobble@till@marker##1\endmarker{}%
    \futurelet\first@char\gobble@till@marker#1\endmarker
    \ifcat\noexpand\first@char A\else
      \def\first@char{}%
    \fi
    \macc@nested@a\relax111{\first@char}%
  \fi
  \endgroup
}
\newcommand{\logLogSlopeTriangle}[5]
{

    \pgfplotsextra
    {
        \pgfkeysgetvalue{/pgfplots/xmin}{\xmin}
        \pgfkeysgetvalue{/pgfplots/xmax}{\xmax}
        \pgfkeysgetvalue{/pgfplots/ymin}{\ymin}
        \pgfkeysgetvalue{/pgfplots/ymax}{\ymax}

        \pgfmathsetmacro{\xArel}{#1}
        \pgfmathsetmacro{\yArel}{#3}
        \pgfmathsetmacro{\xBrel}{#1-#2}
        \pgfmathsetmacro{\yBrel}{\yArel}
        \pgfmathsetmacro{\xCrel}{\xArel}

        \pgfmathsetmacro{\lnxB}{\xmin*(1-(#1-#2))+\xmax*(#1-#2)} 
        \pgfmathsetmacro{\lnxA}{\xmin*(1-#1)+\xmax*#1} 
        \pgfmathsetmacro{\lnyA}{\ymin*(1-#3)+\ymax*#3} 
        \pgfmathsetmacro{\lnyC}{\lnyA+#4*(\lnxA-\lnxB)}
        \pgfmathsetmacro{\yCrel}{\lnyC-\ymin)/(\ymax-\ymin)} 

        \coordinate (A) at (rel axis cs:\xArel,\yArel);
        \coordinate (B) at (rel axis cs:\xBrel,\yBrel);
        \coordinate (C) at (rel axis cs:\xCrel,\yCrel);

        \draw[black]   (A)-- node[pos=0.5,anchor=north] {\scriptsize{1}}
                    (B)-- 
                    (C)-- node[pos=0.,anchor=west] {\scriptsize{\color{#5}#4}} 
                    (A);
    }
}
\numberwithin{equation}{section}
\renewcommand*{\eqref}[1]{%
  \hyperref[{#1}]{\textup{\tagform@{\ref*{#1}}}}%
}
\title{An arbitrary order and pointwise divergence-free finite element scheme for the incompressible 3D Navier-Stokes equations.}
\author{Marien-Lorenzo HANOT.\thanks{ IMAG, UMR 5149, 
 Universit\'e de Montpellier, 
34090 Montpellier, FRANCE
(\email{marien-lorenzo.hanot@umontpellier.fr}).
}}
\begin{document}
\maketitle
\begin{abstract}
In this paper we discretize the incompressible Navier-Stokes equations in the framework of finite element exterior calculus.
We make use of the Lamb identity to rewrite the equations into a vorticity-velocity-pressure form which fits into the de Rham complex of minimal regularity.
We propose a discretization on a large class of finite elements, 
including arbitrary order polynomial spaces readily available in many libraries.
The main advantage of this discretization is that the divergence of the fluid velocity is pointwise zero at the discrete level.
This exactness ensures pressure robustness. 
We focus the analysis on a class of linearized equations for which we prove well-posedness and provide a priori error estimates.
The results are validated with numerical simulations.
\end{abstract}
\begin{keywords} Finite Element, exterior calculus, incompressible Navier-Stokes, Hodge decomposition, de Rham complex, mixed element.
\end{keywords}

\begin{AMS} 35Q30, 65N30, 76D07, 76M10.
\end{AMS}

\section{Introduction} 
We are interested in numerical schemes preserving the algebraic structure of the incompressible Navier-Stokes equations.
Recently much work has been done to design structure preserving methods, but
while the construction of such methods was found early on in two dimensions,
the three-dimensional case remained difficult and the introduction of the finite element exterior calculus brought a significant breakthrough.
An excellent review is given by V. John et al.\ \cite{John2017}.
The general idea taken from the finite element exterior calculus is to use a subcomplex of the De Rham complex.
There are well-known discrete counterparts of this complex with minimal regularity, however the discretization of smoother variants is still an active topic 
usually leading to shape functions of high degree, see e.g.\ \cite{Neilan2015}.
We chose to use the complex with minimal regularity, as this is often done for electromagnetism or recently for magnetohydrodynamics (see \cite{Hu2021}).

The main difference from usual schemes lies in the regularity of the velocity field
since we only require it to be in $H(\mathrm{div})$ and in the discrete adjoint of $H(\mathrm{curl})$.
Although the continuous space regularity is the same as the usual one, since the adjoint of $(\mathrm{curl},H(\mathrm{curl}))$ is $(\mathrm{curl},H_0(\mathrm{curl}))$, 
and the velocity is sought in $H(\mathrm{div}) \cap H_0(\mathrm{curl}) \subset H^1$ (for a smooth enough domain, see \cite[Part 3.2]{GiraultRaviart}).
This does not hold (in general) in the discrete case since for $V_h \subset H(\mathrm{curl})$ and $(\mathrm{curl}^*,V_h^*)$
the adjoint of $(\mathrm{curl},V_h)$ we no longer have $V_h^* \subset H(\mathrm{curl})$.
This has a fundamental impact both from the philosophical and practical point of view.
In practice $v \in H(\mathrm{div})$ (resp. $v \in H(\mathrm{curl})$) does not impose continuity of the tangential (resp. normal) components on faces.
This suggests that 
we will not have any degree of freedom corresponding to these
and lack any way to set them in a Dirichlet boundary condition.
This means that the normal and the tangential part of the boundary condition must be treated in two different ways.
It makes more sense in the exterior algebra and means that the fluid velocity is really sought as a $2$-form 
(mostly defined by its flux across cell boundaries) which happens to be regular enough to also be in the domain of the exterior derivative adjoint.

Let us summarize the main idea of the algorithm. In order to preserve the divergence free constraint, we have to consider $u$ as a $2$-form,
which can be discretized by face elements. 
Then it is not straightforward to discretize the Laplacian in the usual way $\langle \nabla u, \nabla v \rangle$ because $\nabla u$ is not a natural quantity for a $2$-form.
Our simple trick is to use $\nabla \cdot u = 0$ to rewrite the Laplacian: 
\begin{equation} \label{eq:laplace}
\begin{aligned}
\Delta u =&\, \nabla (\nabla \cdot u) - \nabla \times (\nabla \times u) 
=\, - \nabla \times (\nabla \times u).
\end{aligned}
\end{equation}

Let $\Omega$ be a bounded domain of $\mathbb{R}^3$ and $T > 0$, we recall the Navier-Stokes equations:
\begin{equation}
\begin{aligned}
u_t + (u \cdot \nabla) u - \nu \Delta u + \nabla p =&\ f \text{ on } \Omega \times (0,T),\\
\nabla \cdot u =&\ 0 \text{ on } \Omega \times (0,T)
\end{aligned}
\end{equation}
together with some boundary and initial conditions,
where $u$ is the velocity of the fluid,
$p$ the pressure, $\nu$ the kinematic viscosity and $f$ an external force.\\
Using the Lamb identity $(u \cdot \nabla) u = (\nabla \times u) \times u + \frac{1}{2} \nabla (u \cdot u)$ and Equation \eqref{eq:laplace}, we get the following formulation:
\begin{equation}
\begin{aligned}
u_t + (\nabla \times u) \times u + \nu \nabla \times (\nabla \times u) + \nabla P =&\ f \text{ on } \Omega \times (0,T),\\
\nabla \cdot u =&\ 0 \text{ on } \Omega \times (0,T)
\end{aligned}
\end{equation}
where $P := p + \frac{1}{2} u \cdot u$ is the Bernoulli pressure.

Since $u$ is a $2$-form it is not natural to take $\nabla \times u$ (and it is unadvisable for reasons detailed in Remark \ref{rem:removedstar}).
Therefore, we introduce an auxiliary variable $\omega = \nabla \times u$ (namely the vorticity) and work with a mixed problem.
This is known as the vorticity-velocity-pressure formulation and was considered by many others (see \cite{ARNOLD2012,Amoura,Kreeft,Dubois,Anaya}).
The finite element exterior calculus framework is very flexible because it
allows us to work in abstract  spaces which can be discretized easily provided that some exactness properties are fulfilled.
Therefore, we shall use a generic name for our spaces, here $V^1 \times V^2 \times V^3$ for the continuous spaces and $V_h^1 \times V^2_h \times V^3_h$ for the discrete spaces (indexed by the mesh size $h$).
Stating the exact requirements for these spaces requires introducing some concepts and notation, 
hence for the sake of readability we postpone the definition to Section \ref{Spaces}.
Typically a valid choice is to take $V^1 = H(\text{curl},\Omega)$, $V^2 = H(\text{div},\Omega)$ and $V^3 = L^2(\Omega)$.
We also need another space $\mathfrak{H}^3 \subset V^3$. This is a vector space that does not only depend on $V^3$ but on the couple $V^2 \times V^3$,
and which is typically of small dimension (i.e.\ $0$ or $1$).
\begin{remark}
The choice of boundary conditions is encoded in the choice of $V^1 \times V^2 \times V^3$.
More details are given later in Section \ref{Boundaryconditions}.
\end{remark}

An example of discrete in time, mixed and linearized weak formulation is:\\
Given $f^n \in L^2(\Omega)$, find $(\omega^n,u^n,p^n,\phi^n) \in V^1 \times V^2 \times V^3 \times \mathfrak{H}^3$ such that 
$\forall (\tau,v,q,\chi) \in V^1 \times V^2 \times V^3 \times \mathfrak{H}^3$,

\begin{subequations} \label{eq:NS_Eulercont}
\begin{align}
\langle \omega^n,\tau \rangle - \langle u^n,\nabla \times \tau \rangle =&\ 0, \label{eq:weakl1}\\
\langle \frac{1}{\delta t} u^n, v \rangle + \langle \nu \nabla \times \omega^n + \theta \omega^n \times u^{n-1} + (1 - \theta) \omega^{n-1} \times u^n,v \rangle& \nonumber \\
- \langle p^n, \nabla \cdot v \rangle
=&\ \langle \frac{1}{\delta t} u^{n-1}, v \rangle + \langle f^n,v \rangle, \label{eq:weakl2}\\
\langle \nabla \cdot u^n + \phi^n,q \rangle =&\ 0, \label{eq:weakl3}\\
\langle p^n,\chi \rangle =&\ 0. \label{eq:weakl4}
\end{align}
\end{subequations}
Here $n \geq 1$ is the index of a linearly implicit time discretization 
and $\theta \in [0,1]$ is an arbitrary parameter.
In the following we take $\theta = \frac{1}{2}$. 
Let us look at a specific example:
If we take $V^1 = H(\text{curl},\Omega)$, $V^2 = H(\text{div},\Omega)$ and $V^3 = L^2(\Omega)$ 
then we must have $\mathfrak{H}^3 = \lbrace 0 \rbrace$.
Equation \eqref{eq:weakl1} is equivalent to $\omega^n = \nabla \times u^n$ and $u^n \in H_0(\mathrm{curl}, \Omega)$,
\eqref{eq:weakl2} is equivalent to 
$\frac{u^n - u^{n-1}}{\delta t} + \nu \nabla \times (\nabla \times u^n) + \frac{1}{2} (\omega^n \times u^{n-1} + \omega^{n-1} \times u^n) + \nabla p^n = f^n$ and $p^n \in H_0^1$,
\eqref{eq:weakl3} is equivalent to $\nabla \cdot u^n = 0$,
and \eqref{eq:weakl4} is trivial here (since $\mathfrak{H}^3 = \lbrace 0 \rbrace$).
This formulation is similar to the one studied by Anaya et al.\ \cite{Anaya}. 
The main difference is that our formulation is studied in the framework of finite element exterior calculus and for arbitrary low order perturbations (see Equation \eqref{eq:introformulation} below). 
In particular the abstraction made on discrete spaces allows using any discrete subcomplex (as defined in Section \ref{Notations}). 
Two families are given as examples in Section \ref{Spaces} but more exist on different kind of meshes (e.g.\ the cubical elements \cite[Chapter~7.7]{feec-cbms}).
The construction of such families is still an active topic 
and the independence over the choice of discrete subcomplex is a great feature of finite element exterior calculus 
allowing to choose any family without modification to the proofs.

\begin{remark}[On Equations \eqref{eq:weakl3} and \eqref{eq:weakl4}]
From the exterior calculus point of view $\phi$ and $\chi$ are harmonic forms of $V^3$, 
in practice they can be viewed as Lagrange multipliers.
For the given example $\mathfrak{H}^3$ is trivial but if instead we consider 
$V^1 = H_0(\text{curl},\Omega)$, $V^2 = H_0(\text{div},\Omega)$ and $V^3 = L^2(\Omega)$ 
then we must have $\mathfrak{H}^3 \approx \mathbb{R}$ (the space of constant functions).
Equation \eqref{eq:weakl3} ensures that the system is onto, although here the right-hand side is null so $\phi^n$ will always be zero.
Equation \eqref{eq:weakl4} ensures that $p^n$ is orthogonal to the space of harmonic $3$-forms (i.e.\ here that $\int_\Omega p^n = 0$).
\end{remark}

Abstracting the linearization and time discretization scheme we simply consider two linear maps: 
$l_3$ and $l_5$ defined on $L^2(\Omega) \rightarrow L^2(\Omega)$ 
(the name of which follows the convention of Arnold \& Li \cite{Arnold2016}).
And we define the problem:
Given $f_2, f_3 \in L^2(\Omega)$, find $(\omega,u,p,\phi) \in V^1 \times V^2 \times V^3 \times \mathfrak{H}^3$ such that 
$\forall (\tau,v,q,\chi) \in V^1 \times V^2 \times V^3 \times \mathfrak{H}^3$,
\begin{equation} \label{eq:introformulation} 
\begin{aligned}
\langle \omega,\tau \rangle - \langle u,\nabla \times \tau \rangle =&\ 0, \\
\langle \nu \nabla \times \omega + l_3 \omega + l_5 u,v \rangle - \langle p, \nabla \cdot v \rangle
=&\ \langle f_2,v \rangle, \\
\langle \nabla \cdot u,q \rangle + \langle \phi,q \rangle =&\ \langle f_3, q \rangle,\\
\langle p,\chi \rangle =&\ 0.
\end{aligned}
\end{equation}
We easily see that a suitable choice of $l_3$ and $l_5$ (namely $l_3 = (v \rightarrow \frac{1}{2} v \times u^{n-1})$ and $l_5 = (v \rightarrow \frac{1}{\delta t} v + \frac{1}{2} \omega^{n-1} \times v$)
allows us to recover \eqref{eq:NS_Eulercont} for $\theta = \frac{1}{2}$.
We redefine the problem in the framework of exterior calculus in \eqref{eq:defmixedcontinuous}. 
Under mild assumptions on $l_3$, $l_5$ and $\Omega$ detailed in \eqref{eq:regularity35} we prove the well-posedness of the problem \eqref{eq:defmixedcontinuous} (or equivalently \eqref{eq:introformulation}) and of its discrete counterpart \eqref{eq:defmixeddiscrete}.
If we write $(\omega,u,p,\phi)$ (resp. $(\omega_h,u_h,p_h,\phi_h)$) the solution of \eqref{eq:defmixedcontinuous} (resp. \eqref{eq:defmixeddiscrete}) then we derive an optimal a priori error estimate on the energy norm 
proportional to the approximation properties of the discrete spaces used.
The result is stated in Corollary \ref{corollary:pertubedestimate}.
When $f_3 = 0$ (which is the case for \eqref{eq:NS_Eulercont}) we show that the velocity is exactly divergence free even at the discrete level, i.e.\ that
$ \nabla \cdot u = \nabla \cdot u_h = 0 $ holds pointwise. 
We also show that the scheme is pressure-robust (see Section \ref{Pressurerobustness}). 
This allows deriving an error estimate for the vorticity and velocity independent of the pressure in Theorem \ref{th:pressurerobustestimate}.

The remaining of this paper is divided as follows:
We define the notation used in the paper and discuss some applications of the scheme in Section \ref{Setting}.
We show the well-posedness and derive error estimates for an intermediary problem akin to a Stokes problem in Section \ref{Unpertubedproblem}.
Section \ref{Pertubedproblem} is dedicated to the analysis of problem \eqref{eq:introformulation}.
This is the most technical section.
We derive some additional results in Section \ref{Conservedquantities},
and finally we present a variety of numerical simulations done with our scheme in Section \ref{Numericalsimulations} that validate our results and give some perspectives.
The exterior calculus formalism is introduced in Section \ref{Setting} and heavily used in Section \ref{Unpertubedproblem} and \ref{Pertubedproblem}. 
We assume that the reader has some familiarity with exterior calculus in those two sections (\ref{Unpertubedproblem} and \ref{Pertubedproblem}).
However, no prior knowledge of exterior calculus is expected in Section \ref{Conservedquantities} and \ref{Numericalsimulations}.

\section{Setting} \label{Setting}
The exterior calculus framework allows getting a uniform vision on many objects. 
As such it may appear abstract and confusing at first.
Therefore, we will start by giving some explicit examples of spaces to fix the ideas,
and discuss how to deal with boundary conditions.
Only then we will introduce the full notation and specifications of exterior calculus which will be used in the remaining part of the paper.

Throughout the paper we consider the problem in $3$ dimensions.
However, with usual modifications for the definition of $\omega$,
everything said and proved also works in $2$ dimensions.
This is an advantage of exterior calculus formalism.

\subsection{Function spaces} \label{Spaces}
Our scheme does not rely on a particular choice of discrete spaces, instead we make some assumptions (given in Section \ref{Notations}) on them and any spaces fulfilling these assumptions can be used.
Adequate spaces are readily available on simplicial and cubic meshes (they are given e.g.\ in the 
periodic table of finite elements \cite{periodictable}).
We illustrate here an example of sensible choices. 
Since the discrete spaces depend on the continuous ones (through boundary conditions) 
we begin by setting the continuous spaces for this example.
Let $V^1 = H_0(\mathrm{curl},\Omega)$, 
$V^2 = H_0(\mathrm{div},\Omega)$, $V^3 = L^2(\Omega)$ and $\mathfrak{H}^3 = \mathbb{R} \subset L^2(\Omega)$.
The discrete spaces depend on a polynomial degree $r \in \mathbb{N}$.
Let $\mathbf{T}_h$ be a simplicial triangulation of $\Omega$, we choose the following discretization:
\begin{itemize} \label{defaultspaces}
\item The curl space $V^1_h$ is built upon Nedelec's edge elements of the first kind of degree $r$ (or $P^-_r\Lambda^1$ in the periodic table),
$V_h^1 = \lbrace \omega \in H_0(\mathrm{curl},\Omega);\; \omega_{\vert T} \in P_r^-\Lambda^1(T), \forall T \in \mathbf{T}_h \rbrace$.
\item The velocity space $V^2_h$ is built upon Nedelec's face elements of the first kind of degree $r$ (or $P^-_r\Lambda^2$),
$V_h^2 = \lbrace u \in H_0(\mathrm{div},\Omega);\; u_{\vert T} \in P_r^-\Lambda^2(T), \forall T \in \mathbf{T}_h \rbrace$.
\item The pressure space $V^3_h$ is built upon, discontinuous Galerkin elements of degree $r-1$ (or $P^-_r\Lambda^3$),
$V_h^3 = \lbrace p \in L^2(\Omega);\; p_{\vert T} \in P_r^-\Lambda^3(T), \forall T \in \mathbf{T}_h \rbrace$.
\item The space of discrete harmonic forms $\mathfrak{H}^3_h = \mathbb{R} \subset L^2(\Omega)$ 
is the $L^2$-orthogonal complement of $\mathrm{div}(V^2_h)$ in $V^3_h$.
\end{itemize}
We can replace the first kind with the second, at the expense of increasing the polynomial degrees.
Nedelec elements of first and second kind are respectively the $3$-dimensional equivalent of Raviart-Thomas and of Brezzi-Douglas-Marini elements.
The space $\mathfrak{H}^3_h$ is just the natural way of fixing the pressure (which is defined up to an arbitrary constant when the normal component of the velocity is fixed on the boundary).
Figure \ref{fig:elements} shows the degrees of freedom of this choice of finite elements for $r = 1$ and $r = 2$.
The first element shown on the left corresponds to the space $V^0_h$ that plays no role here.

\begin{figure}
\centering
\includegraphics[width=0.7\textwidth]{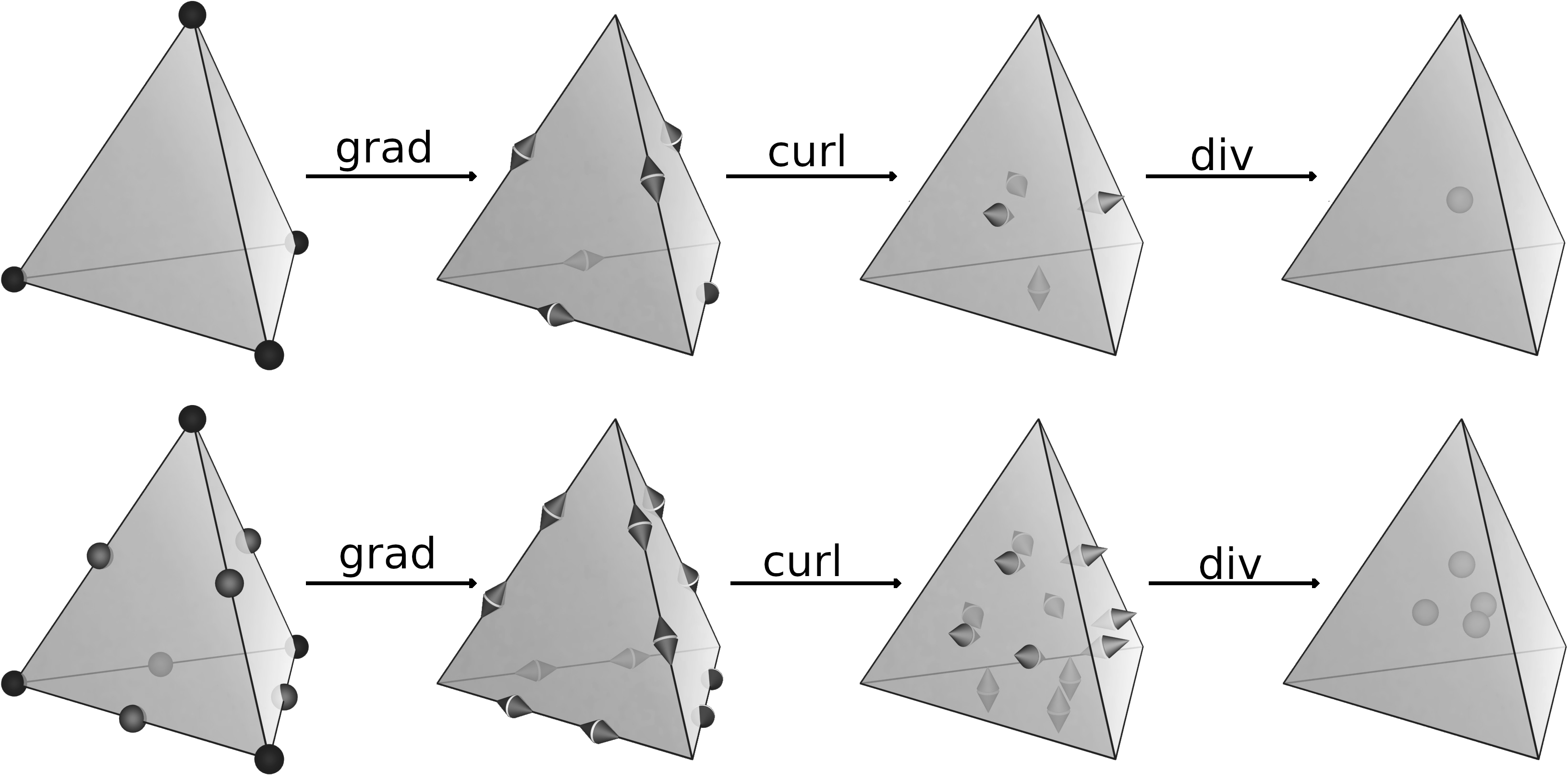}
\caption{Degrees of freedom on reference elements for the polynomial degrees $r = 1$ and $r = 2$.}
\label{fig:elements}
\end{figure}

\begin{remark}
In full generality $\mathfrak{H}^3_h$ is simply the space of the discrete harmonic 3-forms.
If we were to take $V^1 = H(\mathrm{curl},\Omega)$, 
$V^2 = H(\mathrm{div},\Omega)$ and $V^3 = L^2(\Omega)$ instead, we would have $\mathfrak{H}^3 = \mathfrak{H}^3_h = \lbrace 0 \rbrace$.
\end{remark}

For this choice we can make explicit the approximation properties in terms of the size of the mesh $h$ and of the polynomial degree $r$.
For $(\omega,u,p,\phi)$ (resp. $(\omega_h,u_h,p_h,\phi_h)$) the solution of \eqref{eq:defmixedcontinuous} 
(resp. \eqref{eq:defmixeddiscrete}) the estimate of Corollary \ref{corollary:pertubedestimate} reads:
\[ 
\Vert \omega - \omega_h \Vert_{H(\text{curl})} + \Vert u - u_h \Vert_{H(\text{div})} + \Vert p - p_h\Vert_{L^2} + \Vert \phi - \phi_h \Vert_{L^2} = \mathcal{O}(h^{r}) .
\] 

\subsection{Boundary conditions} \label{Boundaryconditions}
We give a quick review of the boundary conditions readily available.
Let $n$ be the unit outward vector normal to the boundary and
$g$ and $h$ arbitrary functions in a suitable space.
Equalities below are always understood $\text{on } \partial \Omega$ and
any combination of the following conditions can be used:
\begin{align}
\omega \times n &=\ g,\  u \cdot n =\ h, \label{eq:bc1} \\
\omega \times n &=\ g,\  p =\ h, \label{eq:bc2} \\
u &=\ g, \label{eq:bc3} \\
u \times n &=\ g,\  p =\ h. \label{eq:bc4}
\end{align}
Condition \eqref{eq:bc3} allows enforcing the no slip condition
with $u \cdot n = 0$, $u \times n = 0$.

In order to implement them, we use essential Dirichlet boundary conditions for: 
\begin{itemize}
\item $\omega \times n = g$ and take test functions in $H_0(\mathrm{curl})$.
\item $u \cdot n = g$ and take test functions in $H_0(\mathrm{div})$.
\end{itemize}
And natural conditions for the other two:
\begin{itemize}
\item $u \times n = g$ by adding $- \int_{\partial \Omega} (g \times \tau) \cdot n ds$ to the left-hand side of equations \eqref{eq:NS_Eulercont}.
\item $p = h$ by adding $\int_{\partial \Omega} h v \cdot n ds$ to the left-hand side of equations \eqref{eq:NS_Eulercont}.
\end{itemize}

\begin{remark}
Although they can all be imposed, only conditions \eqref{eq:bc1} and \eqref{eq:bc4} give a complex.
The following proofs rely on the use of complexes, so we assume that one of these two conditions is chosen.
When we do not use a complex we still have the well-posedness, however the rate of convergence can be impacted.
The case of condition \eqref{eq:bc3}, which is used to impose the no slip condition, 
is studied by Arnold et al.\ \cite{ARNOLD2012,Chen2014}. 
They show that the rate of convergence on the vorticity (for the $H(\text{curl})$ norm) 
and on the pressure (for the $L^2$ norm) is slightly degraded.
However, for a polynomial degree of at least $2$ the rate of convergence of the velocity (for the $H(\text{div})$ norm) is not impacted.
\end{remark}
\subsection{Notation} \label{Notations} 
Finally we state the full framework of exterior calculus which will be used.
We make extensive use of the following notation introduced by D. Arnold \cite{feec-cbms}. 
They are explained in greater detail in the original reference.

\begin{itemize}
\item $d$ is the exterior derivative, $d^*$ its adjoint or codifferential,
\item $W^0 \rightarrow W^1 \rightarrow W^2 \rightarrow W^3$ is the $L^2$ de Rham complex of a bounded domain of $\mathbb{R}^3$.
We shall only use the last three of them.
\item $V^0 \rightarrow V^1 \rightarrow V^2 \rightarrow V^3$ is a dense subcomplex on which the exterior derivative is defined 
(and not just densely defined). Here we have $H^1(\Omega) \rightarrow H(\mathrm{curl},\Omega) \rightarrow H(\mathrm{div},\Omega) \rightarrow L^2(\Omega)$.
\item $V^*_k$ is the domain of the adjoint of $(d,V^k)$.
\item $\Vert \cdot \Vert$ is the $L^2$-norm for scalar or vector valued functions.
\item $\Vert \cdot \Vert_V$ is the $V$ norm, defined by $\Vert \cdot \Vert_V := \Vert \cdot \Vert + \Vert d \cdot \Vert$.
\item Sometimes we take the norm $\Vert \cdot \Vert_{V\cap V^\star}$ on $V \cap V^*$, it is given by $\Vert \cdot \Vert_V + \Vert d^* \cdot \Vert$.
\item $\Vert (u,v) \Vert_{A \times B} := \Vert u \Vert_A + \Vert v \Vert_B$ is the norm on the product space $A \times B$.
\item $V^0_h \rightarrow V^1_h \rightarrow V^2_h \rightarrow V^3_h$ is a discrete subcomplex parametrized by $h$.
\item $P_h$ is the $L^2$-orthogonal projection on the discrete subcomplex, and in general $P_A$ is the $L^2$-orthogonal projection on $A$.
\end{itemize}

We assume that our complexes have the compactness property, 
which means that the inclusion $V^k \cap V_k^* \subset W^k$ is compact for each $k$.
We also assume that there exists a cochain projection $\pi_h^k: V^k \rightarrow V^k_h$, bounded for the $W$-norm, uniformly in $h$.
Each space has the Hodge-decomposition $W^k = \mathfrak{B}^k \operp \mathfrak{H}^k \operp \mathfrak{B}^*_k$,
where $\mathfrak{B}^k := d (V^{k-1})$, $\mathfrak{B}^*_k := d^*(V^*_{k+1})$ 
and $\mathfrak{H}^k := \mathfrak{Z}^k \cap (\mathfrak{B}^k)^\perp$, $\mathfrak{Z}^k$ being the kernel of $d: V^k \rightarrow V^{k+1}$.

In order to measure the approximation properties of the discrete subspaces we introduce the notation $E := E^k$,
defined by:
\begin{equation} \label{eq:approxprop}
\forall k, \forall \sigma \in V^k,\ E^k(\sigma) := \inf_{\tau \in V^k_h} \Vert \sigma - \tau \Vert\ .
\end{equation}

The choice of spaces introduced in Section \ref{defaultspaces} does indeed verify all these properties
with $E = \mathcal{O}(h^r)$ on a dense subset (see \cite{Arnold_2010}).
Each space has a Poincaré inequality (see \cite[Theorem~4.6]{feec-cbms}). 
That is to say, for each $k$, $\exists c_p^k > 0$ such that $\forall z \in \mathfrak{Z}^{k \perp_{V^k}}$, $\Vert z \Vert_{V^k} \leq c_p^k \Vert d z \Vert_{L^2}$.
Moreover this also holds for the discrete subcomplex with constants bounded by $c_p^k\, \Vert \pi_h^k \Vert_{W^k}$ (see \cite[Theorem~5.3]{feec-cbms}).
We set $c_p := \max_{k,h} c_p^k\, \Vert \pi_h^k \Vert_{W^k}$, so that the Poincaré inequality holds for $c_p$ regardless of the space.

For reference let $v_1 \in V^1$, $v_2 \in V^2$, $v_3 \in V^3$. 
The exterior derivative acts as $dv_1 := \text{curl} (v_1)$, $dv_2 := \text{div} (v_2)$ and $dv_3 := 0$.
When it is obvious from the context we shall drop the exponent, 
hence $\pi_h v_1 := \pi_h^1 v_1$, $\pi_h v_2 := \pi_h^2 v_2$.
When we apply an operator (such as $\pi_h$, $d$ or $d^*$) 
to a product space, we mean to apply it to each component (i.e.\ $d (v_1,v_2,v_3) := (d v_1, d v_2, 0)$).
When we add a suffix $_h$ such as $d_h$ instead of $d$ we refer 
to the discrete counterpart of the object.
Since we use a discrete subcomplex, $d_h$ is just the restriction of $d$ to $V_h$.
We will often add a numerical suffix such as $z_2$ for $z \in V^2 \times V^3$,
this means we take the $V^2$ component of $z$.

Usually when dealing with a primal formulation
we will use the variables $(u,p)$, and they can indeed be seen as the velocity and pressure.
However, when we deal with mixed formulations we will frequently write $(u_1,u_2,u_3)$. 
Here $1$,$2$ and $3$ refer to 1-form, 2-form and 3-form and have nothing to do with 
components in a frame of the velocity field.
Specifically we will have the identification $u_1 = \omega$, $u_2 = u$ and $u_3 = p$.

The symbol $A \lesssim B$ means that there exists a constant $C > 0$ independent of $A$ and $B$ 
(depending solely on few specified parameters) such that $A \leq C B$.

\section{Linear steady problem} \label{Unpertubedproblem}
We first study a simpler problem, analogous to a Stokes problem, which is also 
closely related to the Hodge-Laplace problem (see \cite{feec-cbms}).
\begin{definition}
Let $D_0 := \lbrace (u,p) \in (V^2 \cap V^*_2) \times V^*_3 \vert\  d^*u \in V^1 \rbrace$
, $f \in W^2 \times W^3$,
\begin{equation} \label{eq:L0}
L_0 := \begin{bmatrix}
\nu d d^* & - d^*\\
 d & 0 
\end{bmatrix} =
\begin{bmatrix}
\nu \nabla \times \nabla \times &  \nabla\\
\nabla \cdot & 0
\end{bmatrix}.
\end{equation}
\end{definition}
The problem is to find $(u,p) \in P_{(\mathfrak{H}^2 \times \mathfrak{H}^3)^\perp} D_0$, such that
\begin{equation} \label{eq:linearsteadypnotation}
\forall (v,q) \in P_{(\mathfrak{H}^2 \times \mathfrak{H}^3)^\perp} D_0,\ \langle L_0 (u,p),(v,q) \rangle = \langle f,(v,q)\rangle. 
\end{equation}
In vector calculus notation we want $(u,p)$ such that 
\[\nu \nabla \times (\nabla \times u) + \nabla p = f_2,\quad \nabla \cdot u = f_3. \]
\subsection{Continuous Well-posedness}
Now we introduce the mixed formulation.
Recall that $u_2$ (resp. $u_3$) defined below corresponds to $u$ (resp. $p$) in \eqref{eq:linearsteadypnotation}.
The mixed formulation is characterized by the introduction of the auxiliary variable $u_1$ corresponding to $\nabla \times u_2$ or $d^* u_2$.
We define $B_0$ by:
\begin{equation}
\begin{aligned}
B_0&((u_1,u_2,u_3,\phi_2,\phi_3),(v_1,v_2,v_3,\chi_2,\chi_3)) := \\ 
&\langle u_1,v_1\rangle  - \langle u_2,d v_1\rangle
+ \nu \langle  d u_1,v_2\rangle  - \langle u_3, d v_2\rangle + \langle \phi_2,v_2\rangle \\
&\ + \langle d u_2,v_3\rangle+ \langle \phi_3,v_3\rangle 
+ \langle u_2,\chi_2\rangle  + \langle u_3,\chi_3\rangle.\\
\end{aligned}
\end{equation}
The problem reads: 
Given $(f_2,f_3) \in W^2 \times W^3$,
find $(u_1,u_2,u_3,\phi_2,\phi_3) \in V^1 \times V^2 \times V^3 \times \mathfrak{H}^2 \times \mathfrak{H}^3 $ such that $\forall (v_1,v_2,v_3,\chi_2,\chi_3) \in V^1 \times V^2 \times V^3 \times \mathfrak{H}^2 \times \mathfrak{H}^3$,
\begin{equation}
\begin{aligned} \label{eq:pbB0}
B_0((u_1,u_2,u_3,\phi_2,\phi_3),(v_1,v_2,v_3,\chi_2,\chi_3)) =& 
\ \langle f_2,v_2\rangle + \langle f_3, v_3 \rangle.
\end{aligned}
\end{equation}

\begin{remark} \label{rem:removedstar}
We removed $d^*$ from the formulation to prepare for the discrete formulation \eqref{eq:discretelinearsteady}.
Indeed $d^*$ and $d_h^*$ are barely related.
In fact $d_h^*$ is a global operator which would greatly deteriorate the sparsity pattern of the system.
\end{remark}

\begin{lemma} \label{lemma:discretewp1}
There is $\alpha > 0$, such that $\forall (u_1,u_2,u_3,\phi_2,\phi_3) \in V^1 \times V^2 \times V^3 \times \mathfrak{H}^2 \times \mathfrak{H}^3$
\[ 
\begin{aligned}
\sup_{(v_1,v_2,v_3,\chi_2,\chi_3)} &\frac{B_0((u_1,u_2,u_3,\phi_2,\phi_3),(v_1,v_2,v_3,\chi_2,\chi_3))}{\Vert (v_1,v_2,v_3,\chi_2,\chi_3) \Vert_V} 
\geq \alpha \Vert (u_1,u_2,u_3,\phi_2,\phi_3) \Vert_V.
\end{aligned}
\] 
\end{lemma}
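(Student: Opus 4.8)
The plan is to prove the inf-sup condition by explicitly constructing, for each fixed $(u_1,u_2,u_3,\phi_2,\phi_3)$, a test tuple $(v_1,v_2,v_3,\chi_2,\chi_3)$ whose $V$-norm is $\lesssim \Vert (u_1,u_2,u_3,\phi_2,\phi_3)\Vert_V$ and for which $B_0$ is bounded below by a positive multiple of $\Vert (u_1,u_2,u_3,\phi_2,\phi_3)\Vert_V^2$; dividing the two estimates then yields $\alpha$. Since $\phi_2,\phi_3$ are harmonic (hence closed) and $d u_3 = 0$, the target norm reduces to $\Vert u_1\Vert + \Vert d u_1\Vert + \Vert u_2\Vert + \Vert d u_2\Vert + \Vert u_3\Vert + \Vert \phi_2\Vert + \Vert \phi_3\Vert$. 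I will repeatedly use the $L^2$-orthogonality $\mathfrak{B}^k \operp \mathfrak{H}^k$, the identity $d\circ d = 0$, and the Poincaré inequality with constant $c_p$.

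First I would isolate the \emph{clean} diagonal controls that come with no harmful cross terms. Choosing $\chi_2 = P_{\mathfrak{H}^2} u_2$ and $\chi_3 = P_{\mathfrak{H}^3} u_3$ returns $\Vert P_{\mathfrak{H}^2} u_2\Vert^2$ and $\Vert P_{\mathfrak{H}^3} u_3\Vert^2$; the components $v_2 = \phi_2$ and $v_3 = \phi_3$ return exactly $\Vert \phi_2\Vert^2$ and $\Vert \phi_3\Vert^2$, because $\langle d u_1,\phi_2\rangle = \langle d u_2,\phi_3\rangle = 0$ and $d\phi_2 = 0$; the component $v_3 = d u_2$ returns $\Vert d u_2\Vert^2$ (since $\phi_3 \perp d u_2$); and a component $\mu\, d u_1$ in $v_2$ returns $\nu\mu\Vert d u_1\Vert^2$, again with all cross terms vanishing since $d(d u_1) = 0$ and $d u_1 \perp \phi_2$. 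For the coupled unknowns I invoke the Poincaré inequality to pick potentials $z_1\in V^1$, $z_2\in V^2$ with $d z_1 = P_{\mathfrak{B}^2} u_2$, $d z_2 = P_{\mathfrak{B}^3} u_3$ and $\Vert z_1\Vert_{V^1}\le c_p\Vert P_{\mathfrak{B}^2} u_2\Vert$, $\Vert z_2\Vert_{V^2}\le c_p\Vert P_{\mathfrak{B}^3} u_3\Vert$.

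I then assemble the full test tuple $v_1 = u_1 - \eta_1 z_1$, $v_2 = \mu\, d u_1 - \eta_2 z_2 + \phi_2$, $v_3 = d u_2 + \phi_3$, $\chi_2 = P_{\mathfrak{H}^2} u_2$, $\chi_3 = P_{\mathfrak{H}^3} u_3$, and expand $B_0$. The $u_1$ part of $v_1$ yields $\Vert u_1\Vert^2$, while $-\eta_1 z_1$ and $-\eta_2 z_2$ yield $\eta_1\Vert P_{\mathfrak{B}^2} u_2\Vert^2$ and $\eta_2\Vert P_{\mathfrak{B}^3} u_3\Vert^2$. The leftover cross terms are $\langle P_{\mathfrak{B}^2} u_2, d u_1\rangle$, $\eta_1\langle u_1,z_1\rangle$, $\nu\eta_2\langle d u_1,z_2\rangle$ and $\eta_2\langle \phi_2,z_2\rangle$; each is handled by Young's inequality together with the potential bounds above. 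The constants are fixed hierarchically ($\eta_1$ small, then $\mu$ large depending on $\nu,\eta_1$, then $\eta_2$ small depending on $\mu$) so that every cross term is absorbed, leaving a positive multiple of $\Vert u_1\Vert^2 + \Vert d u_1\Vert^2 + \Vert P_{\mathfrak{B}^2} u_2\Vert^2 + \Vert P_{\mathfrak{H}^2} u_2\Vert^2 + \Vert d u_2\Vert^2 + \Vert u_3\Vert^2 + \Vert \phi_2\Vert^2 + \Vert \phi_3\Vert^2$.

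The remaining, and I expect the main, obstacle is that this lower bound does not yet see the full $\Vert u_2\Vert$: it lacks the co-exact part of $u_2$, for which no test function directly returns a square. The key observation is that this part is controlled \emph{for free}: in the Hodge decomposition $V^2 = \mathfrak{B}^2 \operp \mathfrak{H}^2 \operp \mathfrak{Z}^{2\perp_{V^2}}$ the co-exact component lies in $\mathfrak{Z}^{2\perp_{V^2}}$, so Poincaré gives $\Vert u_2 - P_{\mathfrak{B}^2} u_2 - P_{\mathfrak{H}^2} u_2\Vert \le c_p\Vert d u_2\Vert$; hence the already-controlled $\Vert d u_2\Vert^2$ also bounds it, and the estimate upgrades to a multiple of $\Vert (u_1,u_2,u_3,\phi_2,\phi_3)\Vert_V^2$. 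A final triangle-inequality check gives $\Vert (v_1,v_2,v_3,\chi_2,\chi_3)\Vert_V \lesssim \Vert (u_1,u_2,u_3,\phi_2,\phi_3)\Vert_V$ (using $d(\mu\, d u_1) = 0$, $d v_3 = 0$ and the potential bounds), and dividing the two estimates produces the claimed $\alpha$. The only delicate points are the orthogonality bookkeeping in expanding $B_0$ and the consistent ordering of the absorption constants.
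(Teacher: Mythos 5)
Your proposal is correct and follows essentially the same route as the paper's proof: an explicit test tuple built from the Hodge decomposition ($\chi_2 = P_{\mathfrak{H}}u_2$, $\chi_3 = P_{\mathfrak{H}}u_3$, harmonic parts $\phi_2,\phi_3$ reinserted into $v_2,v_3$), Poincar\'e potentials for $P_{\mathfrak{B}}u_2$ and $P_{\mathfrak{B}}u_3$, Young absorption of the cross terms, and the final observation $\Vert P_{\mathfrak{B}^*}u_2\Vert \leq c_p\Vert d u_2\Vert$ to recover the full $\Vert u_2\Vert$. The only (cosmetic) deviations are that the paper cancels the $\langle u_2, d u_1\rangle$ cross term exactly by including $P_{\mathfrak{B}}u_2$ in $v_2$ (with $d\rho_2 = P_{\mathfrak{B}}u_3 + du_2$ and $v_3 = du_2 - P_{\mathfrak{B}}u_3 + \phi_3$) and takes its potentials in $\mathfrak{B}^*$ so several cross terms vanish identically, whereas you absorb them via your hierarchy of constants $\eta_1,\mu,\eta_2$ --- both versions yield the same $\alpha$ depending only on $\nu$ and $c_p$.
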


\begin{proof}
For any $(u_1,u_2,u_3,\phi_2,\phi_3) \in V^1 \times V^2 \times V^3 \times \mathfrak{H}^2 \times \mathfrak{H}^3$, 
let $\rho_1 \in \mathfrak{B}^{*}_1 \cap V^1$ be such that $d \rho_1 = P_{\mathfrak{B}} u_2$,
and $\rho_2 \in \mathfrak{B}^{*}_2 \cap V^2$ be such that $d \rho_2 = P_{\mathfrak{B}} u_3 + d u_2$.
And take $v_1 = \nu u_1 - \frac{\nu}{c_p^2} \rho_1$,
$v_2 = P_{\mathfrak{B}} u_2 + d u_1 - \rho_2 + \phi_2$,
$v_3 = d u_2 - P_{\mathfrak{B}} u_3 + \phi_3$,
$\chi_2 = P_{\mathfrak{H}} u_2$,
$\chi_3 = P_{\mathfrak{H}} u_3$.
The Poincaré inequality gives:
\begin{equation} \label{eq:Poincare_rho1}
\Vert \rho_1 \Vert_{V^1} \leq c_p \Vert P_{\mathfrak{B}} u_2 \Vert,\quad
\Vert \rho_2 \Vert_{V^2} \leq c_p \Vert P_{\mathfrak{B}} u_3 \Vert + c_p \Vert d u_2 \Vert. 
\end{equation}
And we easily see that, for hidden constants depending only on $\nu$ and $c_p$,
\[ 
\Vert v_1 \Vert_{V^1} + \Vert v_2 \Vert_{V^2} + \Vert v_3 \Vert_{V^3} + \Vert \chi_2 \Vert + \Vert \chi_3 \Vert \lesssim
\Vert u_1 \Vert_{V^1} + \Vert u_2 \Vert_{V^2} + \Vert u_3 \Vert_{V^3} + \Vert \phi_2 \Vert + \Vert \phi_3 \Vert.
\] 
Using the orthogonality of the Hodge decomposition we get:
\begin{equation} \label{eq:B0_wp}
\begin{aligned}
&B_0((u_1,u_2,u_3,\phi_2,\phi_3),(v_1,v_2,v_3,\chi_2,\chi_3))\\
&\,=\langle u_1,\nu u_1 - \frac{\nu}{c_p^2} \rho_1\rangle  - \langle u_2,d (\nu u_1 - \frac{\nu}{c_p^2} \rho_1)\rangle 
+ \nu \langle  d u_1,P_{\mathfrak{B}} u_2 + d u_1 - \rho_2 + \phi_2\rangle \\
&\quad - \langle u_3, d (P_{\mathfrak{B}} u_2 + d u_1 - \rho_2 + \phi_2)\rangle  
+ \langle d u_2,d u_2 - P_{\mathfrak{B}} u_3 + \phi_3\rangle + \langle u_2,P_{\mathfrak{H}} u_2\rangle \\
&\quad + \langle u_3,P_{\mathfrak{H}} u_3\rangle
+ \langle \phi_2,P_{\mathfrak{B}} u_2 + d u_1 - \rho_2 + \phi_2\rangle  + \langle \phi_3,d u_2 - P_{\mathfrak{B}} u_3 + \phi_3\rangle\\
&\,= \nu \langle u_1, u_1\rangle  - \frac{\nu}{c_p^2} \langle u_1, \rho_1\rangle  - \nu \langle u_2,d u_1\rangle  + \frac{\nu}{c_p^2}\langle u_2, P_{\mathfrak{B}} u_2\rangle 
+ \nu \langle  d u_1,u_2\rangle \\
&\quad + \nu \langle d u_1,d u_1\rangle
+ \langle u_3,P_{\mathfrak{B}} u_3\rangle + \langle u_3, d u_2 \rangle  
+ \langle d u_2,d u_2 \rangle - \langle d u_2, u_3 \rangle \\
&\quad+ \langle u_2,P_{\mathfrak{H}} u_2\rangle 
+ \langle u_3,P_{\mathfrak{H}} u_3\rangle 
+ \langle \phi_2, \phi_2\rangle  + \langle \phi_3,\phi_3\rangle \\
&\,= \nu \Vert u_1 \Vert ^2 - \frac{\nu}{c_p^2} \langle u_1, \rho_1\rangle 
+ \frac{\nu}{c_p^2} \Vert P_{\mathfrak{B}} u_2 \Vert ^2
+ \nu \Vert d u_1 \Vert ^2 + \Vert P_{\mathfrak{B}} u_3 \Vert ^2
+ \Vert d u_2 \Vert ^2 \\
&\quad+ \Vert P_{\mathfrak{H}} u_2 \Vert^2
+ \Vert P_{\mathfrak{H}} u_3 \Vert ^2
+ \Vert \phi_2 \Vert ^2 + \Vert \phi_3 \Vert ^2 . 
\end{aligned}
\end{equation}
Applying \eqref{eq:Poincare_rho1} and $\langle u_1,\rho_1\rangle  \leq \Vert u_1 \Vert \Vert \rho_1 \Vert \leq \frac{c_p^2}{2} \Vert u_1 \Vert ^2 + \frac{1}{2 c_p^2} \Vert \rho_1 \Vert ^2$
to \eqref{eq:B0_wp} yields:
\[
\begin{aligned}
&B_0((u_1,u_2,u_3,\phi_2,\phi_3),(v_1,v_2,v_3,\chi_2,\chi_3)) \geq 
\frac{\nu}{2} \Vert u_1 \Vert ^2 + \frac{\nu}{2 c_p^2} \Vert P_{\mathfrak{B}} u_2 \Vert ^2\\
&\quad\quad+ \nu \Vert d u_1 \Vert ^2 + \Vert u_3 \Vert ^2
+ \Vert d u_2 \Vert ^2
+ \Vert P_{\mathfrak{H}} u_2 \Vert^2 + \Vert P_{\mathfrak{H}} u_3 \Vert ^2
+ \Vert \phi_2 \Vert ^2 + \Vert \phi_3 \Vert ^2 . 
\end{aligned}
\]

Finally, $d u_3 = 0$, $d \phi_2 = d \phi_3 = 0$ and $\Vert P_{\mathfrak{B}^*} u_2 \Vert \leq c_p \Vert d P_{\mathfrak{B}^*} u_2 \Vert = c_p \Vert d u_2 \Vert $, hence
\[B_0((u_1,u_2,u_3,\phi_2,\phi_3),(v_1,v_2,v_3,\chi_2,\chi_3)) \gtrsim
\Vert (u_1,u_2,u_3,\phi_2,\phi_3) \Vert ^2 _V, \]
where the hidden constant depends only on $c_p$ and $\nu$.
\end{proof}

\begin{lemma} \label{lemma:discretewp2}
Given any $(v_1,v_2,v_3,\chi_2,\chi_3) \in V^1 \times V^2 \times V^3 \times \mathfrak{H}^2 \times \mathfrak{H}^3$ there is 
$(u_1,u_2,u_3,\phi_2,\phi_3) \in V^1 \times V^2 \times V^3 \times \mathfrak{H}^2 \times \mathfrak{H}^3$ such that 
\[B_0((u_1,u_2,u_3,\phi_2,\phi_3),(v_1,v_2,v_3,\chi_2,\chi_3)) > 0\ .
\]
\end{lemma}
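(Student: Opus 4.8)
The statement is the nondegeneracy condition which, together with the inf-sup bound of Lemma~\ref{lemma:discretewp1}, makes \eqref{eq:pbB0} well posed; so the cleanest route is to prove the equivalent contrapositive. Suppose a test tuple $(v_1,v_2,v_3,\chi_2,\chi_3)$ satisfies $B_0((u_1,u_2,u_3,\phi_2,\phi_3),(v_1,v_2,v_3,\chi_2,\chi_3)) = 0$ for \emph{every} trial tuple $(u_1,u_2,u_3,\phi_2,\phi_3)$; I would show it must vanish. This gives the lemma: for a nonzero test tuple one then finds a trial tuple with $B_0 \neq 0$, and replacing it by its opposite makes $B_0 > 0$.

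First I would read off the Euler--Lagrange relations by varying one trial component at a time. Varying $u_1 \in V^1$ gives $\langle u_1,v_1\rangle + \nu\langle d u_1,v_2\rangle = 0$, whence $u_1 \mapsto \langle d u_1,v_2\rangle$ is $L^2$-bounded, so $v_2 \in V_2^*$ and $v_1 = -\nu\, d^*v_2$. Varying $u_2 \in V^2$ gives $\langle d u_2,v_3\rangle = \langle u_2, d v_1 - \chi_2\rangle$, so $v_3 \in V_3^*$ and $d^*v_3 = d v_1 - \chi_2$. Varying $u_3 \in V^3 = W^3$ gives $d v_2 = \chi_3$, and varying $\phi_2 \in \mathfrak{H}^2$ and $\phi_3 \in \mathfrak{H}^3$ gives $P_{\mathfrak{H}^2}v_2 = 0$ and $P_{\mathfrak{H}^3}v_3 = 0$.

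Next I would peel off the pieces using the orthogonality of the Hodge decomposition. In $d v_2 = \chi_3$ the left-hand side lies in $\mathfrak{B}^3$ and the right-hand side in $\mathfrak{H}^3$, so both are zero. Rewriting the second relation as $d^*v_3 - d v_1 + \chi_2 = 0$ and noting that $d^*v_3 \in \mathfrak{B}_2^*$, $d v_1 \in \mathfrak{B}^2$, $\chi_2 \in \mathfrak{H}^2$ lie in the three mutually orthogonal summands of $W^2$ forces $d^*v_3 = 0$, $d v_1 = 0$ and $\chi_2 = 0$. Then $v_1 = -\nu\, d^*v_2 \in \mathfrak{B}_1^*$ while $d v_1 = 0$ places $v_1$ in $\mathfrak{Z}^1 = (\mathfrak{B}_1^*)^\perp$, so $v_1 = 0$ and hence $d^*v_2 = 0$, i.e.\ $v_2 \perp \mathfrak{B}^2$; combined with $v_2 \in \mathfrak{Z}^2$ (from $d v_2 = 0$) and $v_2 \perp \mathfrak{H}^2$ this yields $v_2 = 0$. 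Finally $v_3 \perp \mathfrak{H}^3$ together with $d^*v_3 = 0$, that is $v_3 \perp \mathfrak{B}^3$, gives $v_3 = 0$ because $W^3 = \mathfrak{B}^3 \operp \mathfrak{H}^3$.

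The variational bookkeeping is routine; the points that need care are, first, verifying that the tested functionals are genuinely $L^2$-bounded so that $v_2$ and $v_3$ really lie in $V_2^*$ and $V_3^*$ (which is what legitimizes the identities $\langle d u, w\rangle = \langle u, d^*w\rangle$ used above), and second, placing each quantity in the correct Hodge summand so that orthogonality annihilates it. The only dimension-specific step is the vanishing of $v_3$, which uses $\mathfrak{B}_3^* = \{0\}$ in three dimensions. A constructive alternative mirroring Lemma~\ref{lemma:discretewp1}, exhibiting an explicit trial tuple with $B_0 \gtrsim \Vert (v_1,v_2,v_3,\chi_2,\chi_3)\Vert_V^2$, is also available but less transparent.
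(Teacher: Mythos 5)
Your proposal is correct, but it proves the lemma by a genuinely different route than the paper. The paper's proof is constructive: it splits into the cases $P_{\mathfrak{B}}v_2 \neq 0$ and $P_{\mathfrak{B}}v_2 = 0$ and in each case exhibits an explicit trial tuple (e.g.\ $u_1 \in \mathfrak{B}^*_1$ with $du_1 = P_{\mathfrak{B}}v_2$, or $u_2$ prescribed componentwise via the Hodge decomposition) whose pairing with the test tuple is visibly positive; it never leaves the trial space $V^1 \times V^2 \times V^3 \times \mathfrak{H}^2 \times \mathfrak{H}^3$ and never invokes the adjoint domains. You instead prove the contrapositive: assuming $B_0(\cdot,\mathbf{v}) \equiv 0$, you vary one trial slot at a time, observe that the resulting functionals $u_1 \mapsto \langle du_1,v_2\rangle$ and $u_2 \mapsto \langle du_2,v_3\rangle$ are $L^2$-bounded so that $v_2 \in V^*_2$, $v_3 \in V^*_3$ with $v_1 = -\nu\,d^*v_2$ and $d^*v_3 = dv_1 - \chi_2$, and then annihilate each piece using the orthogonality of $W^k = \mathfrak{B}^k \operp \mathfrak{H}^k \operp \mathfrak{B}^*_k$; the sign-flip remark correctly converts $B_0 \neq 0$ into $B_0 > 0$ for a nonzero test tuple (and, as in the paper, the lemma is tacitly restricted to nonzero $\mathbf{v}$, since the zero tuple trivially defeats strict positivity). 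I checked your Euler--Lagrange identities and each Hodge-peeling step against the bilinear form, including the degree-$3$ step where $\mathfrak{B}^*_3 = \{0\}$ forces $W^3 = \mathfrak{B}^3 \operp \mathfrak{H}^3$, and they are all sound; one pedantic point is that $V^3$ is only assumed dense in $W^3$ in the abstract framework rather than equal to it, but density is all your variation in $u_3$ actually uses. The trade-off: the paper's construction is elementary and self-contained within the trial spaces, while your argument is more systematic, proves the strictly stronger fact that the transpose problem is injective (the only degenerate test tuple is zero), but leans on the closed-operator/adjoint machinery and on the closedness of ranges guaranteed by the compactness property.
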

\begin{proof}
If $P_{\mathfrak{B}} v_2 \neq 0$ take $u_1 \in \mathfrak{B}^*_1$ such that $d u_1 = P_\mathfrak{B} v_2$, $\phi_2 = 0$, $u_3 = \phi_3 = 0$. 
Then if $\langle u_1,v_1 \rangle = 0$ take $u_2 = 0$ else take $u_2 = d v_1 \frac{\langle u_1,v_1 \rangle}{\langle d v_1, d v_1 \rangle}$ 
($d v_1 \neq 0$ since $P_{\mathfrak{B}^*} v_1 \neq 0$ as $\langle u_1,v_1 \rangle \neq 0$).
If $P_\mathfrak{B} v_2 = 0$, simply take $u_1 = v_1$,
$u_2$ such that $d u_2 = P_{\mathfrak{B}} v_3, P_{\mathfrak{B}} u_2 = - d v_1, P_{\mathfrak{H}} u_2 = \chi_2$ (this is possible by the Hodge decomposition),
$u_3$ such that $P_{\mathfrak{B}} u_3 = - d v_2, P_{\mathfrak{H}} u_3 = \chi_3$,
$\phi_2 = P_{\mathfrak{H}} v_2$ and
$\phi_3 = P_{\mathfrak{H}} v_3$.
\end{proof}

Lemma \ref{lemma:discretewp1} and Lemma \ref{lemma:discretewp2} 
together with the 
Babuška–Lax–Milgram theorem
give the well-posedness of \eqref{eq:pbB0}.
Moreover, we have for $c$ depending only on $c_p$ and $\nu$:
\begin{equation}
\Vert u_1 \Vert_{V^1} + \Vert u_2 \Vert_{V^2} + \Vert u_3 \Vert_{V^3} + \Vert \phi_2 \Vert_{V^2} 
+ \Vert \phi_3 \Vert_{V^3} \leq c\, \Vert f \Vert.
\end{equation}
\subsection{Discrete problem and error estimate} 
The discrete problem reads: \newline
Given $(f_2,f_3) \in W^2 \times W^3$,
find $(u_{1h},u_{2h},u_{3h},\phi_{2h},\phi_{3h}) \in V^1_h \times V^2_h \times V^3_h \times \mathfrak{H}^2_h \times \mathfrak{H}^3_h $ such that 
$\forall (v_{1h},v_{2h},v_{3h},\chi_{2h},\chi_{3h}) \in V^1_h \times V^2_h \times V^3_h \times \mathfrak{H}^2_h \times \mathfrak{H}^3_h$,
\begin{equation} \label{eq:discretelinearsteady}
B_0((u_{1h},u_{2h},u_{3h},\phi_{2h},\phi_{3h}),(v_{1h},v_{2h},v_{3h},\chi_{2h},\chi_{3h})) = \langle f_2,v_{2h}\rangle + \langle f_3,v_{3h} \rangle.
\end{equation}
\begin{lemma}
Problem \eqref{eq:discretelinearsteady} is well-posed.
\end{lemma}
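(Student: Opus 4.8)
The plan is to observe that \eqref{eq:discretelinearsteady} is exactly problem \eqref{eq:pbB0} with the bilinear form $B_0$ restricted to the discrete subcomplex $V^1_h \times V^2_h \times V^3_h \times \mathfrak{H}^2_h \times \mathfrak{H}^3_h$, so it suffices to re-run the proofs of Lemma \ref{lemma:discretewp1} and Lemma \ref{lemma:discretewp2} with every object replaced by its discrete counterpart. The crucial point is that neither of those two proofs uses anything specific to the continuous spaces: they rely only on (i) the orthogonal Hodge decomposition of the ambient space and (ii) the Poincaré inequality. Both are available for the discrete subcomplex — the discrete Hodge decomposition $W^k_h = \mathfrak{B}^k_h \operp \mathfrak{H}^k_h \operp \mathfrak{B}^*_{k,h}$ holds because $V_h$ is a genuine subcomplex ($d V^{k-1}_h \subset V^k_h$), and the discrete Poincaré inequality holds with the \emph{same} constant $c_p$, by the very definition $c_p := \max_{k,h} c_p^k\, \Vert \pi_h^k \Vert_{W^k}$.

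Concretely, for the discrete analogue of Lemma \ref{lemma:discretewp1}, given $(u_{1h},\dots,\phi_{3h})$ in the discrete space I would build the test tuple by the same formulas, taking $\rho_{1h} \in \mathfrak{B}^*_{1h} \cap V^1_h$ with $d\rho_{1h} = P_{\mathfrak{B}_h} u_{2h}$ and $\rho_{2h} \in \mathfrak{B}^*_{2h} \cap V^2_h$ with $d\rho_{2h} = P_{\mathfrak{B}_h} u_{3h} + d u_{2h}$, and understanding every projection $P_{\mathfrak{B}}$, $P_{\mathfrak{H}}$ as a projection onto the discrete pieces. Since $d V^{k-1}_h \subset V^k_h$ and $dd=0$, each constructed component stays inside the corresponding discrete space, so the test tuple is admissible. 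The chain of equalities culminating in \eqref{eq:B0_wp} and the subsequent absorption of $\langle u_{1h},\rho_{1h}\rangle$ via Cauchy–Schwarz and Young's inequality are then identical, and the closing estimate $\Vert P_{\mathfrak{B}^*_{2,h}} u_{2h}\Vert \leq c_p \Vert d u_{2h}\Vert$ is just the discrete Poincaré inequality. This yields the discrete inf-sup bound with the same $\alpha$ depending only on $c_p$ and $\nu$, hence uniformly in $h$.

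For the discrete analogue of Lemma \ref{lemma:discretewp2} the construction of a test-dependent trial tuple with $B_0 > 0$ transfers verbatim, each required element again being furnished by the discrete Hodge decomposition. With both conditions in hand, Babuška–Lax–Milgram delivers existence, uniqueness and a stability bound, with a constant independent of $h$ because $c_p$ and $\alpha$ are.

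The only step requiring care — and the main, if modest, obstacle — is the bookkeeping: one must consistently replace the continuous orthogonal decomposition and its projections by the discrete ones and verify that no step silently leaves $V_h$; in particular $\mathfrak{B}^k_h$, $\mathfrak{H}^k_h$ and $\mathfrak{B}^*_{k,h}$ are the spaces of the discrete subcomplex, not restrictions of the continuous ones. Alternatively, since the discrete spaces are finite dimensional and the trial and test spaces coincide, the discrete inf-sup estimate alone forces the associated square matrix to be injective, hence invertible, so the analogue of Lemma \ref{lemma:discretewp2} can be bypassed entirely; I would nonetheless mirror the continuous argument for uniformity of exposition and to keep the explicit $h$-independent stability constant visible for the later error estimates.
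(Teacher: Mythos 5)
Your proposal is correct and matches the paper's argument: the paper's proof consists precisely of the observation that, because $V_h$ is a subcomplex equipped with a discrete Poincar\'e inequality (with constant absorbed into $c_p$ by its definition), the proofs of Lemma \ref{lemma:discretewp1} and Lemma \ref{lemma:discretewp2} transfer verbatim to the discrete setting. Your added remark that the second Babu\v{s}ka condition is automatic in finite dimension from the inf-sup estimate alone is a valid simplification, but the substance of your argument is exactly the paper's.
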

\begin{proof}
Since we have a discrete Poincaré inequality (see \cite{feec-cbms}) and use a subcomplex 
we can apply exactly the same proof as in the continuous case. 
\end{proof}

Next we derive a basic error estimate for a global norm.
Improved error estimates for each component are given by Theorem \ref{th:improvedestimates}.
First we define: 
\begin{equation} \label{eq:mu}
\mu := \max_{k \in \lbrace 2,3 \rbrace} \sup_{r \in \mathfrak{H}^k, \Vert r \Vert = 1} \Vert (I - \pi_h^k) r \Vert.
\end{equation}
\begin{theorem} \label{th:unpertubedestimate}
Given $(f_2,f_3) \in W^2 \times W^3$,
let $(u_1,u_2,u_3,\phi_2,\phi_3)$ be the solution of the continuous problem \eqref{eq:pbB0} and
$(u_{1h},u_{2h},u_{3h},\phi_{2h},\phi_{3h})$ the solution of the discrete problem \eqref{eq:discretelinearsteady},
then for $E$ given by \eqref{eq:approxprop} it holds:
\[
\begin{aligned}
\Vert& (u_1 - u_{1h},u_2 - u_{2h},u_3 - u_{3h}) \Vert_V + \Vert (\phi_2 - \phi_{2h},\phi_3 - \phi_{3h}) \Vert \\ 
& \lesssim \inf_{v_1 \in V^1_h} \Vert u_1 - v_1 \Vert_{V^1} + \inf_{v_2 \in V^2_h} \Vert u_2 - v_2 \Vert_{V^2}  + E(u_3) + E(\phi_2) + E(\phi_3) \\
& \ + \mu (E(P_\mathfrak{B} u_2) + E(P_\mathfrak{B} u_3)).
\end{aligned}
\]
\end{theorem}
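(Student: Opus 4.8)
The plan is to read \eqref{eq:discretelinearsteady} as a Galerkin approximation of \eqref{eq:pbB0} that is \emph{conforming} in its first three slots — we use a subcomplex, so $V_h^k\subset V^k$ — and nonconforming only in the harmonic multiplier slots, since $\mathfrak{H}_h^k\not\subset\mathfrak{H}^k$ in general. Abbreviate the continuous solution by $w=(u_1,u_2,u_3,\phi_2,\phi_3)$ and the discrete one by $w_h$, and fix a discrete comparison element $w_I=(\pi_h^1 u_1,\pi_h^2 u_2,\pi_h^3 u_3,\hat\phi_2,\hat\phi_3)$, where $\hat\phi_2\in\mathfrak{H}_h^2$, $\hat\phi_3\in\mathfrak{H}_h^3$ approximate $\phi_2,\phi_3$. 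The discrete well-posedness already established (the discrete analogue of Lemma \ref{lemma:discretewp1}, with inf-sup constant independent of $h$) gives $\Vert w_I-w_h\Vert_V\lesssim\sup_{v_h}B_0(w_I-w_h,v_h)/\Vert v_h\Vert_V$, so by the triangle inequality it suffices to control (i) the approximation error $\Vert w-w_I\Vert_V$ and (ii) the Galerkin defect $B_0(w-w_h,v_h)$ over discrete test functions $v_h=(v_{1h},v_{2h},v_{3h},\chi_{2h},\chi_{3h})$. This is a second-Strang-lemma argument layered on the already-proved stability.

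First I would dispose of (i). Since $\pi_h$ is a bounded cochain projection commuting with $d$, each conforming component is quasi-optimal in its own norm: $\Vert u_1-\pi_h^1 u_1\Vert_{V^1}\lesssim\inf_{v_1\in V^1_h}\Vert u_1-v_1\Vert_{V^1}$ and likewise for $u_2$, while $\Vert u_3-\pi_h^3 u_3\Vert_{V^3}=E(u_3)$ and $\Vert\phi_j-\hat\phi_j\Vert\lesssim E(\phi_j)$, because $d$ annihilates $V^3$ and $\mathfrak{H}^j$ so the $V$-norm reduces to the $L^2$-norm there (for the last two the FEEC harmonic-gap estimate makes best approximation by discrete harmonic forms comparable to the unconstrained one). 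Summing these reproduces every term on the right-hand side except the last.

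The heart of the proof is (ii). Testing \eqref{eq:pbB0} against the conforming parts $v_{1h},v_{2h},v_{3h}\in V_h\subset V$, the first equation kills the $v_1$-terms and the data pair only with $v_2,v_3$, so $B_0(w,v_h)=\langle f_2,v_{2h}\rangle+\langle f_3,v_{3h}\rangle+\langle u_2,\chi_{2h}\rangle+\langle u_3,\chi_{3h}\rangle$; since $B_0(w_h,v_h)$ equals the same data pairing by \eqref{eq:discretelinearsteady}, the entire consistency error collapses to $B_0(w-w_h,v_h)=\langle u_2,\chi_{2h}\rangle+\langle u_3,\chi_{3h}\rangle$. This is where $\mu$ and the coboundary projections enter. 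Testing \eqref{eq:pbB0} with $\chi_2\in\mathfrak{H}^2$ gives $P_{\mathfrak{H}^2}u_2=0$, and $\chi_{2h}$, being discretely harmonic, is a continuous cocycle orthogonal to $\mathfrak{B}^*_2$; hence only the coboundary part survives, $\langle u_2,\chi_{2h}\rangle=\langle P_\mathfrak{B}u_2,\chi_{2h}\rangle$. Because $\pi_h$ commutes with $d$, $\pi_h^2 P_\mathfrak{B}u_2\in d(V_h^1)$, which is $L^2$-orthogonal to $\chi_{2h}$, so $\langle u_2,\chi_{2h}\rangle=\langle(I-\pi_h^2)P_\mathfrak{B}u_2,\chi_{2h}\rangle$, whose first factor is of size $E(P_\mathfrak{B}u_2)$. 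The gain of the factor $\mu$ then comes from the fact that a discrete harmonic form lies within distance $O(\mu)$ of $\mathfrak{H}^2$: the $\mathfrak{H}^2$-component of $\chi_{2h}$ is annihilated by $\mathfrak{B}\perp\mathfrak{H}$ (and $d(V_h^1)\perp\mathfrak{H}_h^2$), so only the $O(\mu)$ deviation contributes, giving $\lvert\langle u_2,\chi_{2h}\rangle\rvert\lesssim\mu\,E(P_\mathfrak{B}u_2)\,\Vert\chi_{2h}\Vert$ and identically $\lvert\langle u_3,\chi_{3h}\rangle\rvert\lesssim\mu\,E(P_\mathfrak{B}u_3)\,\Vert\chi_{3h}\Vert$. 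Dividing by $\Vert v_h\Vert_V\ge\max(\Vert\chi_{2h}\Vert,\Vert\chi_{3h}\Vert)$ supplies the last term $\mu(E(P_\mathfrak{B}u_2)+E(P_\mathfrak{B}u_3))$.

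The hard part will be exactly this last estimate. The crude bound $\lvert\langle u_2,\chi_{2h}\rangle\rvert\le\Vert(I-\pi_h^2)P_\mathfrak{B}u_2\Vert\,\Vert\chi_{2h}\Vert$ only yields $E(P_\mathfrak{B}u_2)$ and loses the $\mu$; the improvement must be squeezed out of the interplay between the commuting projection and the harmonic gap, using the identity $(I-\pi_h^2)P_\mathfrak{B}\chi_{2h}=-(I-\pi_h^2)P_\mathfrak{H}\chi_{2h}$ together with $\Vert(I-\pi_h^2)r\Vert\le\mu\Vert r\Vert$ for $r\in\mathfrak{H}^2$ (the gap estimates of \cite{feec-cbms}), and keeping the power of $\mu$ sharp is the one genuinely delicate point. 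Once (i) and (ii) are in hand, combining them through the discrete inf-sup bound and the triangle inequality yields the asserted estimate.
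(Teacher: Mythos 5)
Your proposal is correct and follows essentially the same route as the paper: an interpolant plus the uniform discrete inf-sup condition, with the consistency defect collapsing to the discrete-harmonic pairings $\langle u_2,\chi_{2h}\rangle+\langle u_3,\chi_{3h}\rangle$, which carry the factor $\mu(E(P_\mathfrak{B}u_2)+E(P_\mathfrak{B}u_3))$. The only difference is cosmetic: where you sketch re-deriving the key bound $\vert\langle u_i,\chi_{ih}\rangle\vert\lesssim\mu\,E(P_\mathfrak{B}u_i)\Vert\chi_{ih}\Vert$ from the commuting projection and the harmonic gap (note your naive pairing alone only gives $\sqrt{\mu}$; the full $\mu$ needs the equal-dimension gap estimate you correctly point to), the paper simply cites it in the form $\Vert P_{\mathfrak{H}_h}u_i\Vert_V\lesssim\mu\,E(P_\mathfrak{B}u_i)$ from \cite[Theorem~5.2]{feec-cbms} and \cite[Equation~(33)]{Arnold_2010}.
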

\begin{proof}
For all $(v_{1h},v_{2h},v_{3h},\chi_{2h},\chi_{3h}) \in V^1_h \times V^2_h \times V^3_h \times \mathfrak{H}^2_h \times \mathfrak{H}^3_h$
we have:
\[
\begin{aligned}
B_0((u_1,u_2,u_3,\phi_2,\phi_3),(v_{1h},&v_{2h},v_{3h},\chi_{2h},\chi_{3h})) \\
&= \langle f_2,v_{2h}\rangle + \langle f_3,v_{3h}\rangle + 
\langle u_2,\chi_{2h}\rangle + \langle u_3,\chi_{3h}\rangle\ .
\end{aligned}
\]
\pagebreak

Let $(v_{1},v_{2},v_{3},\chi_{2},\chi_{3})$ be the $V$-orthogonal projection 
of $(u_{1},u_{2},u_{3},\phi_{2},\phi_{3})$ into their respective discrete spaces.
By the continuity of $B_0$ it holds,\\
$\forall (v_{1h},v_{2h},v_{3h},\chi_{2h},\chi_{3h}) \in V^1_h \times V^2_h \times V^3_h \times \mathfrak{H}^2_h \times \mathfrak{H}^3_h$,
\[
\begin{aligned}
&B_0((u_{1h}-v_1,u_{2h}-v_2,u_{3h}-v_3,\phi_{2h}-\chi_2,\phi_{3h}-\chi_3),
(v_{1h},v_{2h},v_{3h},\chi_{2h},\chi_{3h})) \\
&\quad= B_0((u_{1}-v_1,u_{2}-v_2,u_{3}-v_3,\phi_{2}-\chi_2,\phi_{3}-\chi_3) ,(v_{1h},v_{2h},v_{3h},\chi_{2h},\chi_{3h}))\\
&\quad\quad\ - \langle u_2,\chi_{2h}\rangle - \langle u_3,\chi_{3h}\rangle \\
&\quad= B_0((u_{1}-v_1,u_{2}-v_2,u_{3}-v_3,\phi_{2}-\chi_2,\phi_{3}-\chi_3) ,(v_{1h},v_{2h},v_{3h},\chi_{2h},\chi_{3h}))\\
&\quad\quad\ - \langle P_{\mathfrak{H}_h} u_2,\chi_{2h}\rangle - \langle P_{\mathfrak{H}_h} u_3,\chi_{3h}\rangle \\
&\quad\lesssim (\Vert u_{1}-v_1 \Vert_{V^1} + \Vert u_{2}-v_2\Vert_{V^2} +\Vert u_{3}-v_3\Vert + \Vert \phi_{2}-\chi_2\Vert + \Vert \phi_{3}-\chi_3\Vert\\
&\quad\quad\ + \Vert P_{\mathfrak{H}_h} u_2 \Vert_{V^2}+ \Vert P_{\mathfrak{H}_h} u_3 \Vert_{V^3})
(\Vert v_{1h} \Vert_{V^1} + \Vert v_{2h} \Vert_{V^2} + \Vert v_{3h} \Vert + \Vert \chi_{2h}\Vert + \Vert \chi_{3h}\Vert) ,
\end{aligned}
\]
where the hidden constant depends only on $\nu$.

From the discrete inf-sup condition we have 
\begin{equation*}
\begin{aligned}
(\Vert u_{1h}-v_1 \Vert_{V^1} &+ \Vert u_{2h}-v_2\Vert_{V^2} +\Vert u_{3h}-v_3\Vert + \Vert \phi_{2h}-\chi_2\Vert + \Vert \phi_{3h}-\chi_3\Vert) \\
&\lesssim (\Vert u_{1}-v_1 \Vert_{V^1} + \Vert u_{2}-v_2\Vert_{V^2} +\Vert u_{3}-v_3\Vert + \Vert \phi_{2}-\chi_2\Vert\\
&\ \ \ + \Vert \phi_{3}-\chi_3\Vert + \Vert P_{\mathfrak{H}_h} u_2 \Vert_{V^2}+ \Vert P_{\mathfrak{H}_h} u_3 \Vert),
\end{aligned}
\end{equation*}
where the hidden constant depends only on $\nu$ and on the discrete constant of Poincaré.
The theorem is inferred from \cite[Theorem 5.2]{feec-cbms} and \cite[Equation~(33)]{Arnold_2010}
which state: 
\[ \Vert P_{\mathfrak{H}_h} u_i \Vert_V \lesssim\, \mu\, E(P_\mathfrak{B} u_i), \quad
\Vert \phi_{i}-\chi_i\Vert \lesssim\,  E(\phi_{i}),
\quad \forall i \in \lbrace 2,3 \rbrace. 
\]
\end{proof}
\begin{remark}
$E(\phi_i)$ is understood as viewing $\phi_i$ as an element of $V^i \supset \mathfrak{H}^i$,
\[ E(\phi_i) := \inf_{q \in V^i_h} \Vert \phi_i - q \Vert. 
\]
\end{remark}

\section{Linearized problem} \label{Pertubedproblem} 
We construct our scheme by adding some lower order terms to Problem \eqref{eq:pbB0}.
We recall the correspondence with the names used for variables in the introduction: $(u_1,u_2,u_3,u_p) \equiv (\omega,u,p,\phi)$
in \eqref{eq:introformulation}.
To keep the notation bearable we shall also write $u_{\mathfrak{B}} := P_{\mathfrak{B}} u$, $u_{\mathfrak{H}} := P_{\mathfrak{H}} u$ and so on.

We consider two linear maps  $l_3: W^1 \rightarrow W^2$ and $l_5: W^2 \rightarrow W^2$ 
(we chose these names to match those used by Arnold \& Li \cite{Arnold2016}).
Define $D := \lbrace (u,p) \in (V^2 \cap V^*_2) \times (V^*_3 \cap \mathfrak{H}^{3\perp}) \vert \ d^* u \in V^1 \rbrace$
, $W = W^2 \times (W^3 \cap \mathfrak{H}^{3 \perp})$ and
\begin{equation} \label{eq:Llambda}
L := \begin{bmatrix}
(\nu d + l_3) d^* + l_5 & - d^*\\
d & 0 
\end{bmatrix}.
\end{equation}
We consider the primal problem:
Given $f \in W$, find $(u,p) \in D$ such that 
\begin{equation} \label{eq:defmixedprimal}
L (u,p) = f.
\end{equation}
We also define the dual operator $L'$ on $D' := \lbrace (u,p) \in (V^2 \cap V^*_2) \times (V^*_3 \cap \mathfrak{H}^{3\perp}) \vert\  (\nu d^* + l_3^*) u \in V^1 \rbrace$ by:
\begin{equation} \label{eq:Llambdap}
L' := \begin{bmatrix}
d (\nu d^* + l_3^*) + l_5^* & d^*\\
- d & 0 
\end{bmatrix}.
\end{equation}

As an intermediary step, we wish to extend $L$ to a larger domain and introduce:
\begin{equation*}
\begin{gathered}
\widebar{L_\lambda}: (V^2 \cap V^*_2) \times (V^*_3 \cap \mathfrak{H}^{3\perp}) \rightarrow (V^*_2 \times (W^3 \cap \mathfrak{H}^{3 \perp}))', \\
\widebar{L_\lambda}(u,p)(v,q) := \langle \nu d^* u,d^* v \rangle + \langle l_3 d^* u + l_5 u - \lambda d^* p,v \rangle + \langle \lambda d u,q \rangle,
\end{gathered}
\end{equation*}
\pagebreak
\begin{equation*}
\begin{gathered}
\widebar{L_\lambda'}: (V^2 \cap V^*_2) \times (V^*_3 \cap \mathfrak{H}^{3\perp}) \rightarrow (V^*_2 \times (W^3 \cap \mathfrak{H}^{3 \perp}))',\\
\widebar{L_\lambda'}(u,p)(v,q) := \langle \nu d^* u + l_3^* u, d^* v \rangle + \langle l_5^* u 
+ \lambda d^* p,v \rangle + \langle -\lambda d u,q \rangle. 
\end{gathered}
\end{equation*}
Here $\lambda$ is a positive parameter introduced to simplify the proof of Theorem \ref{th:Liso}.
In Theorem \ref{th:Liso} we shall see that they are almost always isomorphisms.
We define the solution operator $K := (\widebar{L_1'})^{-1}$, and we assume that 
\begin{equation} \label{eq:regularity35}
d^* (\widebar{L_1}^{-1})_2 (W) \subset V^1,\ (\nu d^* + l_3^*) (K)_2 (W) \subset V^1, 
\end{equation}
where $(\widebar{L_1}^{-1})_2$ and $(K)_2$ are the projections on the first component of the product space taken after the operators.
Moreover, we assume that $\Vert d d^* (K)_2 \Vert_{W \rightarrow W^2} $ and $\Vert d l_3^* (K)_2 \Vert_{W \rightarrow W^2}$ are bounded.
We show in Section \ref{Regularityassumptions} that these assumptions are satisfied when $l_3$ and $l_5$ are those used in our scheme.

The proof follows the same outline as \cite{Arnold2016}.
First we prove that the continuous primal formulation gives an isomorphism,
then we prove that the continuous mixed formulation is well-posed.
Lastly we prove the well-posedness of the discrete mixed formulation
and give an estimation of the error in energy norm.
\subsection{Continuous primal formulation}
\begin{theorem} \label{th:Liso}
$\widebar{L_\lambda} + \gamma \langle \cdot , \cdot \rangle$ is a bounded isomorphism for all $\gamma \in \mathbb{C}$ except for a countable subset.
\end{theorem}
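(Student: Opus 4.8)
The plan is to regard $\gamma \mapsto \widebar{L_\lambda} + \gamma\langle\cdot,\cdot\rangle$ as an affine, hence holomorphic, family of bounded operators from $X := (V^2\cap V^*_2)\times(V^*_3\cap\mathfrak{H}^{3\perp})$ into the dual of $Y := V^*_2\times(W^3\cap\mathfrak{H}^{3\perp})$, and to apply the analytic Fredholm theorem over the connected set $\mathbb{C}$. Continuity of each term shows that the operator is bounded for every $\gamma$, so the entire content is the isomorphism property. I would reduce the statement to two facts: (i) $A_{\gamma_0} := \widebar{L_\lambda} + \gamma_0\langle\cdot,\cdot\rangle$ is an isomorphism for at least one $\gamma_0$, and (ii) the $\gamma$-dependent part of the perturbation is compact, so that the family is a compact perturbation of an invertible operator.

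I would dispose of (ii) first, as it is the clean part. Writing $C:X\to Y'$, $C(u,p)(v,q):=\langle u,v\rangle+\langle p,q\rangle$, one has $\widebar{L_\lambda}+\gamma\langle\cdot,\cdot\rangle = A_{\gamma_0}+(\gamma-\gamma_0)C$. The operator $C$ factors through the inclusion $X\hookrightarrow W^2\times W^3$, which is compact: $V^2\cap V^*_2\hookrightarrow W^2$ is compact by the assumed compactness property, and since $V^3=W^3$ we have $V^*_3\cap\mathfrak{H}^{3\perp}\subset V^3\cap V^*_3\hookrightarrow W^3$ compactly as well. It is essential that only $C$ is treated as a perturbation: the terms $l_3 d^* u$ and $l_5 u$ must stay inside the fixed operator $A_{\gamma_0}$, because $u\mapsto l_3 d^* u$ is merely bounded (not compact) into $W^2$.

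The heart of the proof is (i), which I would establish for $\gamma_0$ of large positive real part by verifying the two Banach--Ne\v{c}as--Babu\v{s}ka conditions, in the spirit of Lemma~\ref{lemma:discretewp1}. Testing $(u,p)$ against itself, the antisymmetric coupling $-\lambda\langle d^* p,\cdot\rangle+\lambda\langle d\,\cdot,q\rangle$ drops out of the real part (by the adjunction $\langle du,p\rangle=\langle u,d^* p\rangle$), the term $\nu\|d^* u\|^2$ is coercive, and $l_3 d^* u,\,l_5 u$ are absorbed by Young's inequality against $\nu\|d^* u\|^2$ and the gained $\operatorname{Re}(\gamma_0)(\|u\|^2+\|p\|^2)$; this controls $\|u\|,\|p\|$ and $\|d^* u\|$. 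To recover the remaining parts $\|du\|$ and $\|d^* p\|$ of the $X$-norm, which the diagonal estimate cannot see, I would add small multiples of the admissible off-diagonal tests $(0,du)$ and $(-d^* p,0)$ (note $du\in\mathfrak{H}^{3\perp}$ and $d^*(d^* p)=0$), whose leading contributions are $\lambda\|du\|^2$ and $\lambda\|d^* p\|^2$; choosing the multiples of order $|\gamma_0|^{-1}$ lets the $\gamma_0$-weighted cross terms be dominated by the already-gained $\operatorname{Re}(\gamma_0)(\|u\|^2+\|p\|^2)$, which is where the positivity of $\lambda$ is convenient. The transposed non-degeneracy condition is obtained by an entirely analogous construction using the adjoint form $\widebar{L_\lambda'}$.

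With (i) and (ii) in hand, $A_{\gamma_0}^{-1}C:X\to X$ is compact and $\widebar{L_\lambda}+\gamma\langle\cdot,\cdot\rangle = A_{\gamma_0}\bigl(I+(\gamma-\gamma_0)A_{\gamma_0}^{-1}C\bigr)$, so the analytic Fredholm theorem gives invertibility for all $\gamma\in\mathbb{C}$ outside a discrete, in particular countable, set. I expect step (i) to be the main obstacle: diagonal coercivity is blind to $\|du\|$ and $\|d^* p\|$, so the crux is the careful choice of the off-diagonal test functions and the absorption of their cross terms, which upgrades a Gårding-type bound into a genuine inf-sup estimate on $X$.
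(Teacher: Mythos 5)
Your overall architecture coincides with the paper's: establish the two Babu\v{s}ka conditions at one sufficiently large $\gamma_0$, observe that the pairing $C$ is compact because it factors through the inclusion $X \hookrightarrow W^2\times W^3$ (the compactness property for the first slot, and $V^3 = W^3$ for the second), and conclude by a countable-spectrum argument — the paper phrases this as countability of the spectrum of the compact operator $I(\gamma\langle\cdot,\cdot\rangle + \widebar{L_\lambda})^{-1}$, which is your analytic-Fredholm step in different clothing. Your treatment of the first condition is also sound and essentially the paper's: the paper tests with $v = u_{\mathfrak{B}} + u_{\mathfrak{H}} - d^*p$, $q = du$ and uses Poincar\'e inequalities, while you keep the full diagonal and add $O(|\gamma_0|^{-1})$ multiples of $(0,du)$ and $(-d^*p,0)$; both choices are admissible in the test space and the absorption bookkeeping you sketch goes through.

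The genuine gap is the second (transposed non-degeneracy) condition, which you dispatch as ``an entirely analogous construction using the adjoint form $\widebar{L_\lambda'}$.'' This cannot work as stated, because the trial and test spaces are asymmetric: you must produce, for every nonzero $(v,q)$ in the \emph{large} space $Y = V^*_2 \times (W^3\cap\mathfrak{H}^{3\perp})$, an element of the \emph{small} space $X = (V^2\cap V^*_2)\times(V^*_3\cap\mathfrak{H}^{3\perp})$ pairing nontrivially with it. Mirroring your first-condition construction through the identity $B((u,p),(v,q)) = \widebar{L_\lambda'}(v,q)(u,p) + \gamma(\cdots)$ requires first that $(v,q)\in X$, and second that the corrected element $(v,q) + \epsilon(0,dv) + \epsilon(-d^*q,0)$ lie in $X$ — but $dv$ need not belong to $V^*_3$ nor $d^*q$ to $V^2$, so the corrections are inadmissible as trial functions; for a general $(v,q)\in Y$ the quantities $dv$ and $d^*q$ do not even exist. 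Nor can you reach all of $Y$ by density from $X$: pointwise non-degeneracy does not pass to limits, and the quantitative substitute $\sup_{x\in X} B(x,(v,q))/\Vert x \Vert_X \gtrsim \Vert (v,q)\Vert_Y$ is not delivered by the diagonal choice $x = (v,q)$, which only yields $\Vert (v,q)\Vert_Y^2 / \Vert (v,q)\Vert_X$, and $\Vert\cdot\Vert_X/\Vert\cdot\Vert_Y$ is unbounded. This is exactly where the paper invests real work: for $v = 0$ it solves a Hodge--Dirac problem ($du = q$, $d^*u = 0$), and for $v \neq 0$ it invokes the solvability (from Arnold \& Li) of $((\nu d + l_3)d^* + l_5 + \gamma + \nu d^* d)u = v$, then takes $p = -(\nu/\lambda)\, du$ with the calibration $\lambda^2 = \gamma\nu$ so that the pairing collapses to $\Vert v \Vert^2$. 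Some such auxiliary solvability input is unavoidable here, and your proposal is missing it.
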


\begin{proof}
Let $c = max(\Vert l_3 \Vert, \Vert l_5 \Vert)$,
for $(u,p) \in (V^2 \cap V^*_2) \times (V^*_3 \cap \mathfrak{H}^{3\perp})$
take 
$v_\mathfrak{B} = u_\mathfrak{B}$,
$v_\mathfrak{H} = u_\mathfrak{H}$,
$v_{\mathfrak{B}^*} = - d^* p$,
$q = d u$. We have:
\begin{equation} \label{eq:Lextendproof}
\begin{aligned}
(\gamma \langle \cdot , \cdot \rangle + \widebar{L_\lambda})(u,p)(v,q) 
=&\ \nu \langle d^* u,d^*u\rangle  + \lambda \langle d^*p,d^*p\rangle  + \lambda \langle d u,d u\rangle  \\
&\  + \langle l_3 d^* u,u_\mathfrak{B} - d^* p + u_\mathfrak{H}\rangle  + \langle l_5 u, u_\mathfrak{B} - d^* p + u_{\mathfrak{H}}\rangle  \\
&\  + \gamma \langle u,u_\mathfrak{B} + u_\mathfrak{H}\rangle  - \gamma \langle u,d^* p\rangle  + \gamma\langle p,d u\rangle  \\
=&\ \nu \Vert d^* u\Vert^ 2+ \lambda \Vert d^* p||^2 + \lambda \Vert d u \Vert ^2 +  \gamma \Vert u_\mathfrak{B} + u_\mathfrak{H} \Vert ^2 \\
&\  + \langle l_3 d^* u,u_\mathfrak{B} -d^*p +u_\mathfrak{H} \rangle  + \langle l_5 u,u_\mathfrak{B} - d^* p + u_\mathfrak{H}\rangle. 
\end{aligned}
\end{equation}
We bound the last line from \eqref{eq:Lextendproof} with the Cauchy-Schwarz inequality:
\[
\begin{aligned}
&\vert \langle l_3 d^* u,u_\mathfrak{B} -d^*p +u_\mathfrak{H} \rangle \vert + \vert \langle l_5 u,u_\mathfrak{B} - d^* p + u_\mathfrak{H}\rangle \vert\\ 
&\quad \leq \frac{c^2}{2\nu} (\Vert u_\mathfrak{B} + u_\mathfrak{H} \Vert ^2 +  \Vert d^* p \Vert ^2)
+ \frac{\nu}{2} \Vert d^* u \Vert ^2  
 + \frac{c^2}{2}(\Vert u_\mathfrak{B} + u_\mathfrak{H} \Vert^2 + \Vert d^* p \Vert ^2) + \frac{1}{2} \Vert u \Vert^2.
\end{aligned}
\]
Since $ \Vert u_{\mathfrak{B}^*} \Vert ^2 \leq c_p^2 \Vert d u \Vert ^2$ 
and $ \Vert u \Vert ^2 = \Vert u_{\mathfrak{B}^*} \Vert ^2 + \Vert u_\mathfrak{B} + u_\mathfrak{H} \Vert^2$
we have:
\[
\begin{aligned}
(\gamma \langle \cdot , \cdot \rangle + \widebar{L_\lambda})(u,p)(v,q) \geq &  
\ (\gamma - \frac{c^2}{2 \nu} - \frac{c^2}{2} - \frac{1}{2}) \Vert u_\mathfrak{B} + u_\mathfrak{H} \Vert ^2 + \frac{\nu}{2} \Vert d^* u\Vert^2\\ 
&+ (\lambda - \frac{c^2}{2 \nu} - \frac{c^2}{2}) \Vert d^* p||^2 
+ (\lambda - \frac{c_p^2}{2}) \Vert d u \Vert ^2  .
\end{aligned}
\]

We use the Poincaré inequality to bound $\Vert u_{\mathfrak{B}^*} \Vert$ by $\Vert d u \Vert$
and $\Vert p_{\mathfrak{B}} \Vert$ by $\Vert d^* p \Vert$ (on the dual complex).
Since $\Vert p_{\mathfrak{B}} \Vert$ = $\Vert p \Vert$ (as $p \in V^3 \cap \mathfrak{H}^{3 \perp}$)
we have for $\lambda$ and $\gamma$ large enough (solely depending on $c$, $\nu$ and $c_p$):
\[
\begin{aligned}
(\gamma \langle \cdot , \cdot \rangle + \widebar{L_\lambda})(u,p)(v,q) &\gtrsim \Vert u \Vert ^2 + \Vert d u \Vert ^2 
+ \Vert d^* u \Vert ^2 + \Vert p \Vert ^2 + \Vert d^* p \Vert ^2 \\
&\gtrsim \Vert (u,p) \Vert _{V^2 \cap V^*_2 \times V^*_3} ^2.
\end{aligned}
\]

Clearly $\Vert (v,q) \Vert_{V^*_2 \times W^3} \leq \Vert (u,p) \Vert _{V^2 \cap V^*_2 \times V^*_3}$ 
as $\Vert d^* v \Vert = \Vert d^* u_\mathfrak{B} \Vert \leq \Vert d^* u \Vert$ and
$(\gamma \langle \cdot , \cdot \rangle + \widebar{L_\lambda})$ is continuous as a bilinear form from $(V^2 \cap V^*_2 \times V^*_3) \times (V^*_2 \times (W^3 \cap \mathfrak{H}^{3 \perp}))$\pagebreak.

The only thing left to show in order to use the Babuška–Lax–Milgram theorem is the second condition.
For any $(v,q) \neq 0$ we must find $(u,p)$ such that 
$(\gamma \langle \cdot , \cdot \rangle + \widebar{L_\lambda})(u,p)(v,q) > 0$.
We can take $\lambda$ and $\gamma$ such that $\lambda ^2 = \gamma \nu$.
We consider two cases: \newline 
When $v = 0$ take $u$ solution of the Hodge-Dirac problem (see \cite{Leopardi2016}): $d u = q$, $d^* u = 0$.
We readily check that $(\gamma \langle \cdot , \cdot \rangle + \widebar{L_\lambda})(u,0)(0,q) = \lambda \langle d u,q\rangle  = \lambda \langle q,q\rangle  > 0$. 
When $v \neq 0$, we can find $u \in \lbrace w \in V^2 \cap V^*_2 \vert d^* w \in V^1, dw \in V^*_3 \rbrace$ such that $((\nu d + l_3) d^* + l_5 + \gamma + \nu d^* d) u = v$ 
(see \cite{Arnold2016}).
Take $p = - \nu/\lambda d u$,
then since $\lambda = \gamma \nu / \lambda$ it holds:
\begin{equation*}
\begin{aligned}
(\gamma \langle \cdot , \cdot \rangle + \widebar{L_\lambda})(u,p)(v,q)  &= \langle \nu d d^* u,v\rangle  + \langle (l_3 d^* + l_5 + \gamma)u,v\rangle  - \lambda \langle d^* p,v\rangle \\ 
&\ + \lambda \langle d u,q\rangle  + \gamma \langle p,q \rangle \\
&= \langle ((\nu d + l_3) d^* + l_5 + \gamma + \nu d^* d) u, v \rangle \\
&\ + \lambda \langle d u,q \rangle - \frac{\gamma \nu}{\lambda} \langle du, q \rangle \\
&= \langle v,v\rangle.
\end{aligned}
\end{equation*}

Thus $(\gamma \langle \cdot , \cdot \rangle + \widebar{L_\lambda})$ is a bounded isomorphism from 
$(V^2 \cap V^*_2) \times (V^*_3 \cap \mathfrak{H}^{3 \perp})$ to $(V_2^* \times (W^3 \cap \mathfrak{H}^{3 \perp}))'$.
Since $I: (V^2 \cap V^*_2) \times (V^*_3 \cap \mathfrak{H}_3^\perp) \rightarrow (V^*_2 \times (W^3 \cap \mathfrak{H}^{3 \perp}))'$ 
is compact by the compactness property,
$I(\gamma \langle \cdot , \cdot \rangle + \widebar{L_\lambda})^{-1}$ is also compact. Since the spectrum of a compact operator is at most countable, 
$Id + \eta I (\gamma \langle \cdot , \cdot \rangle + \widebar{L_\lambda})^{-1}$ has a bounded inverse for all $\eta \in \mathbb{C}$
except for a countable subset.
Therefore, by composing to the right with $\gamma \langle \cdot,\cdot \rangle + \widebar{L_\lambda}$ we get that $\widebar{L_\lambda} + (\gamma + \eta) I$ has almost always a bounded inverse.
\end{proof}

Hence, up to an arbitrary small perturbation, $\widebar{L_\lambda}$ is a bounded isomorphism 
from $(V^2 \cap V^*_2) \times (V^*_3 \cap \mathfrak{H}_3^\perp) $ to $(V^*_2 \times (W^3 \cap \mathfrak{H}^{3 \perp}))'$.

\begin{remark}
We could have left $\mathfrak{H}^3$ in the domain 
and the proof above would still work.
However, in this case, $\widebar{L_\lambda}$ would never have
been an isomorphism since its image cannot reach $\mathfrak{H}^3$.
\end{remark}

We have the same result for the dual problem.
\begin{lemma}
For all $\gamma \in \mathbb{C}$ except for a countable subset, $\widebar{L_\lambda'} + \gamma \langle \cdot , \cdot \rangle$ is a bounded isomorphism.
\end{lemma}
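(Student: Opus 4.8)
The plan is to mirror the structure of the proof of Theorem \ref{th:Liso} almost verbatim, exploiting the fact that $\widebar{L_\lambda'}$ is the formal adjoint of $\widebar{L_\lambda}$. First I would establish the coercivity (the first Babuška–Lax–Milgram condition) by testing against a carefully chosen $(v,q)$ depending on $(u,p)$. Mimicking the previous proof, I would take $v_{\mathfrak{B}} = u_{\mathfrak{B}}$, $v_{\mathfrak{H}} = u_{\mathfrak{H}}$, $v_{\mathfrak{B}^*} = d^* p$ (the sign adjusted to match the sign changes in \eqref{eq:Llambdap}), and $q = -d u$. Expanding $(\gamma \langle \cdot,\cdot\rangle + \widebar{L_\lambda'})(u,p)(v,q)$ and using the orthogonality of the Hodge decomposition, the leading terms $\nu \Vert d^* u\Vert^2 + \lambda \Vert d^* p\Vert^2 + \lambda \Vert d u\Vert^2 + \gamma \Vert u_{\mathfrak{B}} + u_{\mathfrak{H}}\Vert^2$ appear exactly as before, while the lower-order contributions from $l_3^*$ and $l_5^*$ are bounded by Cauchy–Schwarz with the same constant $c = \max(\Vert l_3\Vert, \Vert l_5\Vert) = \max(\Vert l_3^*\Vert, \Vert l_5^*\Vert)$. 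Choosing $\lambda$ and $\gamma$ large (depending only on $c$, $\nu$, $c_p$) and invoking the Poincaré inequality on both the primal and dual complexes then yields coercivity, giving $\gtrsim \Vert (u,p)\Vert^2_{V^2 \cap V^*_2 \times V^*_3}$.

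Next I would verify the second (nondegeneracy) condition: for every $(v,q) \neq 0$ there is some $(u,p)$ with $(\gamma\langle\cdot,\cdot\rangle + \widebar{L_\lambda'})(u,p)(v,q) > 0$. Splitting into the cases $v = 0$ and $v \neq 0$ exactly as in Theorem \ref{th:Liso}, I would again set $\lambda^2 = \gamma\nu$. In the case $v = 0$, solving the Hodge–Dirac problem $du = q$, $d^*u = 0$ makes the pairing reduce to $-\lambda\langle du, q\rangle = -\lambda\langle q,q\rangle$; to fix the sign I would instead take $u$ with $du = -q$ (or equivalently absorb the sign), reflecting the $-\lambda d u$ appearing in $\widebar{L_\lambda'}$. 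In the case $v \neq 0$, I would solve the dual elliptic problem $\bigl(d(\nu d^* + l_3^*) + l_5^* + \gamma + \nu d^* d\bigr)u = v$, whose solvability follows from the elliptic theory invoked in \cite{Arnold2016} (the dual operator has the same structure), and set $p = \nu/\lambda\, d u$ so that the cross terms cancel and the pairing collapses to $\langle v, v\rangle > 0$.

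Finally, having the two Babuška–Lax–Milgram conditions together with the continuity of the bilinear form (which is immediate since $l_3^*, l_5^*$ are bounded), I would conclude that $\widebar{L_\lambda'} + \gamma\langle\cdot,\cdot\rangle$ is a bounded isomorphism for the large $\lambda, \gamma$ chosen above. To upgrade this to ``all $\gamma$ except a countable set,'' I would repeat the compactness argument verbatim: the embedding $I$ is compact by the compactness property, so $I(\gamma\langle\cdot,\cdot\rangle + \widebar{L_\lambda'})^{-1}$ is compact, its spectrum is at most countable, and $\mathrm{Id} + \eta I(\gamma\langle\cdot,\cdot\rangle + \widebar{L_\lambda'})^{-1}$ is invertible for all but countably many $\eta$; composing on the right gives the claim for $\widebar{L_\lambda'} + (\gamma+\eta)I$.

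I expect the only genuine subtlety to be bookkeeping the sign changes induced by the $-d$ and $+d^*$ entries of $\widebar{L_\lambda'}$ (as opposed to $+d$, $-d^*$ in $\widebar{L_\lambda}$), ensuring the chosen test functions still produce positive definite diagonal contributions; the analysis is otherwise entirely parallel. Because of this near-perfect symmetry, I would anticipate the authors to simply state that the proof is identical to that of Theorem \ref{th:Liso} up to the sign adjustments and the use of the adjoint operators $l_3^*, l_5^*$ in place of $l_3, l_5$.
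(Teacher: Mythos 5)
Your proposal is correct and takes exactly the paper's route: the paper's proof consists of the single remark that the argument of Theorem~\ref{th:Liso} carries over with a sign change in the chosen $(v,q)$, with $l_3,l_5$ replaced by $l_3^*,l_5^*$, and with the term $\langle l_3 d^* u, v\rangle$ becoming $\langle l_3^* u, d^* v\rangle$, which is precisely what you anticipated. Your explicit sign bookkeeping (taking $v_{\mathfrak{B}^*}=d^*p$, $q=-du$, solving $du=-q$ in the Hodge--Dirac step, and setting $p=+\nu/\lambda\, du$ so the cross terms cancel under $\lambda^2=\gamma\nu$) checks out and fills in the details the paper leaves implicit.
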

\begin{proof}
The same proof as the one of Theorem \ref{th:Liso} works.
The only differences will be a sign in the chosen $(v,q)$,
$l_5$ and $l_3$ replaced with $l_5^*$ and $l_3^*$ 
and $(l_3 d^* u,v)$ changed to $(l_3^* u,d^* v)$.
This does not add any difficulty in the proof.
\end{proof}

\begin{remark}
The proof Theorem \ref{th:Liso} requires taking $\lambda$ to be sufficiently large,
however for any $f_2 \in (V_2^{*})'$, $f_3 \in (W^3 \cap \mathfrak{H}^{3 \perp})'$, $\lambda_0 > 0$, $\lambda_1 > 0$ we have the following equivalence:
\[ \widebar{L_{\lambda_0}} (u,p) = (f_2,f_3) \Leftrightarrow \widebar{L_{\lambda_1}} (u, \frac{\lambda_0}{\lambda_1} p) = (f_2, \frac{\lambda_1}{\lambda_0} f_3)\ .
\]
Therefore, if $\widebar{L_{\lambda_0}}$ is a bounded isomorphism for a given $\lambda_0$, it easily follows that $\widebar{L_{\lambda_1}}$ is a bounded isomorphism for any $\lambda_1 > 0$,
in particular for $\lambda_1 = 1$.
The same argument works for $\widebar{L_{\lambda}'}$.
\end{remark}

From here onward, we assume that $\widebar{L_1}$ and $\widebar{L_1'}$ are bounded isomorphisms. 

\subsection{Well-posedness of the continuous mixed formulation}
As we did in the unperturbed case, 
we introduce an auxiliary variable in the problem.
In the following, $\mathbf{u}$ is a shortcut for $(u_1,u_2,u_3,u_p)$.
We define $B$ by:
\begin{equation} \label{eq:defBlambda}
\begin{aligned}
B(\mathbf{u},\mathbf{v}) :=&\  \langle u_1,v_1\rangle  - \langle u_2,d v_1\rangle 
 + \nu \langle  d u_1,v_2\rangle  - \langle u_3, d v_2\rangle  + \langle d u_2,v_3\rangle  \\
&\ + \langle l_3 u_1,v_2\rangle  + \langle l_5 u_2,v_2\rangle  + \langle u_p,v_3\rangle  + \langle u_3,v_p\rangle. 
\end{aligned}
\end{equation}
The mixed formulation is:
Given $(f_2,f_3) \in W$, find $\mathbf{u} := (u_1,u_2,u_3,u_p) \in V^1 \times V^2 \times V^3 \times \mathfrak{H}^3 $ such that 
$\forall \mathbf{v} := (v_1,v_2,v_3,v_p) \in V^1 \times V^2 \times V^3 \times \mathfrak{H}^3$,
\begin{equation} \label{eq:defmixedcontinuous}
B(\mathbf{u},\mathbf{v}) = \langle f_2,v_2\rangle  + \langle f_3,v_3\rangle.
\end{equation}

For $(u,p)$ solution of \eqref{eq:defmixedprimal}, 
it immediately appears that 
$(d^* u,u,p,0)$ solves \eqref{eq:defmixedcontinuous}.
Now for $(u_1,u_2,u_3,u_p)$ solution of \eqref{eq:defmixedcontinuous},
the first line implies that $u_2 \in V^*_2$, $d^* u_2 = u_1$ and therefore $d^* u_2 \in V^1$.
The second line implies that $u_3 \in V^*_3$.
And the last implies that $u_3 \perp \mathfrak{H}^3$.
Therefore $(u_2,u_3) \in D$, and it obviously solves \eqref{eq:defmixedprimal}.

\begin{theorem}
Under the regularity assumption \eqref{eq:regularity35}, Problem \eqref{eq:defmixedcontinuous} is well-posed.
\end{theorem}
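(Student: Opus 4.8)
The plan is to deduce well-posedness of the mixed formulation \eqref{eq:defmixedcontinuous} from the solvability of the primal formulation \eqref{eq:defmixedprimal}, exploiting the equivalence established just above, rather than constructing explicit test functions as in the unperturbed Lemma \ref{lemma:discretewp1}. The motivation is that the lower-order terms \(l_3\) and \(l_5\) are not sign-definite, so the direct coercivity argument of the unperturbed case is unavailable; the Fredholm-type argument behind Theorem \ref{th:Liso} is precisely what replaces it.

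First I would record the continuity of \(B\): each of the nine terms in \eqref{eq:defBlambda} is bounded by the \(V\)-norms of its arguments, the two lower-order contributions being controlled through the boundedness of \(l_3 : W^1 \to W^2\) and \(l_5 : W^2 \to W^2\). Thus \(B\) is a bounded bilinear form on \((V^1 \times V^2 \times V^3 \times \mathfrak{H}^3)^2\).

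For existence, given \((f_2,f_3) \in W\) I would use the standing assumption that \(\widebar{L_1}\) is a bounded isomorphism (Theorem \ref{th:Liso}) to produce a unique \((u,p) \in (V^2 \cap V^*_2) \times (V^*_3 \cap \mathfrak{H}^{3\perp})\) with \(\widebar{L_1}(u,p) = (f_2,f_3)\) and \(\|(u,p)\| \lesssim \|f\|\). The regularity assumption \eqref{eq:regularity35}, \(d^*(\widebar{L_1}^{-1})_2(W) \subset V^1\), forces \(d^* u \in V^1\), so \((u,p) \in D\); on \(D\), integration by parts (legitimate because \(d^* u\) now lies in the domain of \(d\)) identifies \(\widebar{L_1}\) with the primal operator \(L\) of \eqref{eq:Llambda}, so \((u,p)\) solves \eqref{eq:defmixedprimal}. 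The equivalence then furnishes the mixed solution \((d^* u, u, p, 0)\) of \eqref{eq:defmixedcontinuous}. Uniqueness runs in reverse: any homogeneous mixed solution \(\mathbf{u}\) satisfies \(u_1 = d^* u_2\) and \(u_p = 0\), with \((u_2,u_3)\) solving the homogeneous primal problem, hence vanishing by injectivity of \(\widebar{L_1}\).

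It remains to upgrade these to the stability estimate \(\|\mathbf{u}\|_V \lesssim \|f\|\) in the product \(V\)-norm. The components \(u_2\), \(u_3\), and \(u_p = 0\) are already controlled by \(\|(u,p)\|\); the only delicate term is the auxiliary variable \(u_1 = d^* u_2\), whose curl \(d u_1 = d d^* u_2\) must be bounded. This follows because \eqref{eq:regularity35} makes \(f \mapsto d^* u_2\) a closed --- hence, by the closed graph theorem, bounded --- map from \(W\) into \(V^1\). I expect the genuine obstacle to be exactly this passage from the extended operator \(\widebar{L_1}\), defined on the large domain \((V^2 \cap V^*_2) \times (V^*_3 \cap \mathfrak{H}^{3\perp})\) and valued in a dual space, to the primal operator \(L\) on the smaller domain \(D\): the gap is closed solely by the regularity hypothesis \eqref{eq:regularity35}, which is the substantive assumption here and is verified for the concrete \(l_3, l_5\) of the scheme in Section \ref{Regularityassumptions}.
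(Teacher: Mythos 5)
Your proposal is correct, but it proves the theorem by a genuinely different route than the paper. The paper's proof is a Babu\v{s}ka--Lax--Milgram argument at the level of the bilinear form $B$: it establishes the quantitative inf-sup condition \eqref{eq:pertubedwp1} by combining a duality construction (Lemma \ref{lemma:defz}, which builds an explicit test function $\mathbf{z}$ from the \emph{dual} solution operator $K = (\widebar{L_1'})^{-1}$) with an explicit test-function estimate (Lemma \ref{lemma:continuous2}), and then verifies nondegeneracy in the second argument (Lemma \ref{lemma:pertubedwp2}). You instead transfer the isomorphism property of $\widebar{L_1}$ through the mixed--primal equivalence stated just before the theorem, which is legitimate: existence, uniqueness, and the $W$-data stability bound all follow as you describe, and your closed-graph step is sound (the graph of $f \mapsto d^*(\widebar{L_1}^{-1})_2 f$ is closed because $\widebar{L_1}^{-1}$ is bounded into $V^2 \cap V^*_2$ and $V^1$-convergence implies $L^2$-convergence). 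In fact the closed graph theorem is avoidable: once $(u,p) \in D$ solves \eqref{eq:defmixedprimal} strongly, the first row of $L(u,p)=f$ gives $\nu\, d u_1 = \nu\, d d^* u_2 = f_2 - l_3 d^* u_2 - l_5 u_2 + d^* u_3$ in $L^2$, so $\Vert d u_1 \Vert \lesssim \Vert f \Vert$ with explicit constants. The trade-offs are worth noting. Your argument is shorter and uses only the primal half of assumption \eqref{eq:regularity35}, whereas the paper's proof of this theorem also invokes the dual half (through $K$ in Lemma \ref{lemma:defz}). On the other hand, your argument yields solvability only for right-hand sides in $W$ (which matches the problem as stated) with a non-constructive stability constant, while the paper's route produces the inf-sup condition \eqref{eq:pertubedwp1} for $B$ itself, valid for arbitrary functionals in $V'$ and with constants tracked in terms of $\Vert l_3 \Vert$, $\Vert l_5 \Vert$, $\Vert K \Vert$, $\nu$, and $c_p$. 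That stronger, form-level statement is precisely what the paper reuses downstream: the quasi-optimal error estimate of Corollary \ref{corollary:pertubedestimate} and the discrete well-posedness of Section \ref{Discretewellposedness} rest on discrete analogues of Lemmas \ref{lemma:defz} and \ref{lemma:continuous2}, and your primal-transfer argument has no discrete counterpart, since $\widebar{L_1}^{-1}$ does not map into the finite element spaces. So your proof settles the theorem as stated, but the paper's heavier machinery is not wasted effort --- it is the part of the argument that survives discretization.
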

The proof follows from the Babuška–Lax–Milgram theorem along with Lemma \ref{lemma:pertubedwp2} and inf-sup condition \eqref{eq:pertubedwp1}.
In the following, hidden constants only depend on $\Vert l_3 \Vert$, $\Vert l_5 \Vert$, $\Vert K \Vert$, $\Vert d (d^* + l_3^*) (K)_2 \Vert$, $\nu$ and on constants of Poincaré.
We shall write $V = V^1 \times V^2 \times V^3 \times \mathfrak{H}^3$.

\begin{lemma} \label{lemma:defz}
For all $(0,u_2,u_3,0) = \mathbf{u} \in V$, there exists $\mathbf{z} \in V$ such that $\Vert \mathbf{z} \Vert_V \lesssim \Vert \mathbf{u} \Vert_V$
and 
$\forall \mathbf{\omega} \in V, B(\mathbf{\omega},\mathbf{z}) = \langle \mathbf{\omega},\mathbf{u}\rangle $.
\end{lemma}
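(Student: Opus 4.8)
The plan is to reverse-engineer $\mathbf{z}=(z_1,z_2,z_3,z_p)$ from the requirement, expanding $B(\mathbf{\omega},\mathbf{z})$ and collecting terms by the four components of the test function $\mathbf{\omega}=(\omega_1,\omega_2,\omega_3,\omega_p)$. Moving the derivatives in $\langle d\omega_1,z_2\rangle$ and $\langle d\omega_2,z_3\rangle$ onto $\mathbf{z}$ through the adjoints (which presupposes $z_2\in V^*_2$, $z_3\in V^*_3$) and the maps $l_3,l_5$ through $l_3^*,l_5^*$, the demand $B(\mathbf{\omega},\mathbf{z})=\langle\mathbf{\omega},\mathbf{u}\rangle=\langle\omega_2,u_2\rangle+\langle\omega_3,u_3\rangle$ decouples into one identity per component: the $\omega_1$-line forces $z_1=-(\nu d^*+l_3^*)z_2$, the $\omega_p$-line forces $z_3\perp\mathfrak{H}^3$, the $\omega_3$-line forces $-dz_2+z_p=u_3$, and the $\omega_2$-line forces $[d(\nu d^*+l_3^*)+l_5^*]z_2+d^*z_3=u_2$.

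The last two of these are exactly the two components of the dual operator $L'$ evaluated at $(z_2,z_3)$, so the key step is to set $(z_2,z_3):=K(u_2,P_\mathfrak{B}u_3)$ using the isomorphism $K=(\widebar{L_1'})^{-1}$, and then to define $z_1:=-(\nu d^*+l_3^*)z_2$ and $z_p:=P_\mathfrak{H}u_3$. In three dimensions the Hodge decomposition of $W^3$ has trivial $\mathfrak{B}^*_3$, so $u_3=P_\mathfrak{B}u_3+P_\mathfrak{H}u_3$; this is precisely what makes the $\omega_3$-line close, since then $-dz_2+z_p=P_\mathfrak{B}u_3+P_\mathfrak{H}u_3=u_3$. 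The regularity hypothesis $(\nu d^*+l_3^*)(K)_2(W)\subset V^1$ from \eqref{eq:regularity35} is what places $z_1$ in $V^1$, so that $\mathbf{z}\in V$ and $dz_1$ is well-defined.

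With $\mathbf{z}$ so chosen, the verification that $B(\mathbf{\omega},\mathbf{z})=\langle\mathbf{\omega},\mathbf{u}\rangle$ for all $\mathbf{\omega}\in V$ is a direct integration by parts: the three $\omega_1$-terms cancel identically once $z_1$ is substituted, the $\omega_2$-terms collapse to $\langle\omega_2,u_2\rangle$ by the first component of $L'(z_2,z_3)=(u_2,P_\mathfrak{B}u_3)$, the $\omega_3$-terms give $\langle\omega_3,u_3\rangle$ by the computation above, and the $\omega_p$-term vanishes because $z_3\perp\mathfrak{H}^3$.

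It then remains to prove $\Vert\mathbf{z}\Vert_V\lesssim\Vert\mathbf{u}\Vert_V$. The easy pieces are $\Vert z_p\Vert=\Vert P_\mathfrak{H}u_3\Vert\le\Vert u_3\Vert$ and the control of $z_2$, $z_3$ and $dz_2$ through the boundedness of $K$ together with $\Vert P_\mathfrak{B}u_3\Vert\le\Vert u_3\Vert$. I expect the main obstacle to be the single term $\Vert z_1\Vert_{V^1}=\Vert(\nu d^*+l_3^*)z_2\Vert+\Vert d(\nu d^*+l_3^*)z_2\Vert$, and this is exactly why the paragraph preceding the lemma assumes $\Vert dd^*(K)_2\Vert$ and $\Vert dl_3^*(K)_2\Vert$ to be bounded: writing $z_2=(K)_2(u_2,P_\mathfrak{B}u_3)$ gives $d(\nu d^*+l_3^*)z_2=(\nu dd^*+dl_3^*)(K)_2(u_2,P_\mathfrak{B}u_3)$, to which those two hypotheses apply directly. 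Gathering all the estimates then yields the bound with a constant depending only on the quantities listed before the lemma.
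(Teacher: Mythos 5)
Your proposal is correct and follows essentially the same route as the paper: you choose $(z_2,z_3)=K(u_2,P_{\mathfrak{B}}u_3)$ (identical to the paper's $K(u_2,P_{\mathfrak{H}^\perp}u_3)$, since $\mathfrak{B}^*_3=0$ makes $P_{\mathfrak{B}}=P_{\mathfrak{H}^\perp}$ on $W^3$), set $z_1=-(\nu d^*+l_3^*)z_2$ and $z_p=P_\mathfrak{H}u_3$, invoke \eqref{eq:regularity35} to place $z_1$ in $V^1$, and control $\Vert dz_1\Vert$ via the assumed boundedness of $\Vert dd^*(K)_2\Vert$ and $\Vert dl_3^*(K)_2\Vert$ --- exactly the paper's construction and estimate. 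The only difference is expository (you derive the ansatz by matching coefficients of $\omega_1,\omega_2,\omega_3,\omega_p$, whereas the paper posits $\mathbf{z}$ and verifies), which changes nothing in substance.
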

\begin{proof}
Take $(z_2,z_3) = K (u_2,P_{\mathfrak{H}^\perp} u_3)$ and $\xi = - (\nu d^* + l_3^*)z_2$.
Then $\xi \in V^1$ by assumption \eqref{eq:regularity35}, and
by the definition of $K := (\widebar{L_1'})^{-1}$ we have $\forall \mathbf{\omega} \in V$: 
\begin{equation} \label{eq:lemmadefz1}
\begin{aligned}
- \langle d z_2,\omega_3 \rangle =& \langle P_{\mathfrak{H}^\perp} u_3, \omega_3 \rangle, \\
\langle d (\nu d^* + l_3^*) z_2 + l_5^* z_2 + d^* z_3, \omega_2 \rangle =& \langle u_2,\omega_2 \rangle. 
\end{aligned}
\end{equation}
We set $\mathbf{z} := ( \xi,z_2,z_3,u_{3 \mathfrak{H}})$. Applying \eqref{eq:lemmadefz1} we have:
\begin{equation*}
\begin{aligned}
B( \mathbf{\omega},\mathbf{z}) 
=&\, \langle \omega_1, \xi\rangle  - \langle \omega_2,d \xi\rangle
 + \langle ( \nu d + l_3)  \omega_1,z_2\rangle  - \langle \omega_3,d z_2\rangle  + \langle l_5 \omega_2,z_2\rangle  \\
& +\langle d \omega_2, z_3\rangle  + \langle \omega_3,u_{3 \mathfrak{H}}\rangle  + \langle \omega_p,z_3\rangle  \\
=&\, \langle \omega_1, \xi\rangle  - \langle \omega_2,d \xi\rangle
 + \langle \omega_1, (\nu d^* + l_3^*) z_2\rangle  + \langle \omega_3,P_{\mathfrak{H}^\perp} u_3 \rangle  + \langle \omega_2,l_5^* z_2\rangle  \\
& + \langle \omega_2, d^* z_3\rangle  + \langle \omega_3,u_{3 \mathfrak{H}}\rangle  \\
=&\, \langle \omega_1, \xi\rangle  - \langle \omega_2,d \xi\rangle
 + \langle \omega_1,(\nu d^* + l_3^*) z_2 \rangle - \langle \omega_2,d (\nu d^* + l_3^*)z_2\rangle  \\
& + \langle \omega_2,u_2\rangle + \langle \omega_3, P_{\mathfrak{H}^\perp} u_3\rangle  + \langle \omega_3,u_{3 \mathfrak{H}}\rangle  \\
=& -\langle \omega_1,( \nu d^* + l_3^*)  z_2\rangle  + \langle \omega_2,d ( \nu d^* + l_3^*)  z_2\rangle  \\
& + \langle \omega_1,(\nu d^* + l_3^*) z_2\rangle  - \langle \omega_2,d (\nu d^* + l_3^*)z_2\rangle 
 + \langle \omega_3,u_3\rangle  + \langle \omega_2,u_2\rangle  \\
=&\, \langle \omega_2,u_2\rangle  + \langle \omega_3,u_3\rangle. 
\end{aligned}
\end{equation*}
Moreover, since $\widebar{L_1'}$ is a bounded isomorphism, so is $K$ thus
\begin{equation*}
\begin{aligned}
\Vert \mathbf{z} \Vert_V &\leq \Vert \nu d^* z_2 \Vert + \Vert l_3^* z_2 \Vert + \Vert z_2 \Vert_{V^2} + \Vert z_3 \Vert + \Vert d \xi \Vert + \Vert u_{3\mathfrak{H}} \Vert \\
&\lesssim \Vert z_2 \Vert_{V^2 \cap V^*_2} + \Vert z_3 \Vert_{V^*_3} + \Vert d \xi \Vert + \Vert u_{3 \mathfrak{H}} \Vert \\
&\lesssim (\Vert K \Vert + \Vert d(d ^* + l_3^*) K \Vert + 1) \Vert \mathbf{u} \Vert \lesssim \Vert \mathbf{u} \Vert_V.
\end{aligned}
\end{equation*}
\end{proof}

\begin{lemma} \label{lemma:continuous2}
For all $\mathbf{u} \in V$, there exists $\mathbf{z} \in V$ such that $\Vert \mathbf{z} \Vert_V \lesssim \Vert \mathbf{u} \Vert_V$
and $B(\mathbf{u},\mathbf{z}) \gtrsim \Vert d u_1 \Vert ^2 + \Vert d u_2 \Vert^2 + \Vert u_1 \Vert ^2 + \Vert u_p \Vert ^2 - \Vert u_2 \Vert^2$.
\end{lemma}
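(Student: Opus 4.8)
The plan is to prove this G\aa rding-type inequality by exhibiting an explicit test tuple $\mathbf{z}$, in the same spirit as the construction in Lemma \ref{lemma:discretewp1}, choosing each component of $\mathbf{z}$ so that it is paired in \eqref{eq:defBlambda} against exactly the quantity we wish to recover on the right-hand side. Concretely I would take $z_1 = \beta u_1$, $z_2 = d u_1$, $z_3 = d u_2 + u_p$ and $z_p = 0$, where $\beta > 0$ is a constant to be fixed at the end of the argument. All four components plainly lie in the correct spaces ($z_2 = d u_1 \in V^2$ since $u_1 \in V^1$, and $z_3 \in V^3$ since $d u_2, u_p \in L^2$), and the bound $\Vert \mathbf{z} \Vert_V \lesssim \Vert \mathbf{u} \Vert_V$ is immediate because $d z_2 = d d u_1 = 0$ and $d z_3 = 0$, so no derivative of order higher than those already present in $\mathbf{u}$ appears.

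Next I would substitute this $\mathbf{z}$ into the definition of $B$ and simplify. Two structural cancellations do the essential work: first $d z_2 = d d u_1 = 0$ annihilates the term $-\langle u_3, d z_2 \rangle$; second, the Hodge orthogonality $\mathfrak{B}^3 \perp \mathfrak{H}^3$ (so that $\langle d u_2, u_p \rangle = 0$) makes the two pairings against $z_3$ collapse to $\langle d u_2, z_3 \rangle + \langle u_p, z_3 \rangle = \Vert d u_2 \Vert^2 + \Vert u_p \Vert^2$. After these simplifications one is left with
\[
B(\mathbf{u},\mathbf{z}) = \beta \Vert u_1 \Vert^2 + \nu \Vert d u_1 \Vert^2 + \Vert d u_2 \Vert^2 + \Vert u_p \Vert^2 - \beta \langle u_2, d u_1 \rangle + \langle l_3 u_1, d u_1 \rangle + \langle l_5 u_2, d u_1 \rangle,
\]
that is, the desired positive diagonal together with three cross terms.

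The remaining step is to absorb the three cross terms by Young's inequality. The pairings $-\beta \langle u_2, d u_1 \rangle$ and $\langle l_5 u_2, d u_1 \rangle$ are split so that their $\Vert u_2 \Vert^2$ part is dumped into the $-\Vert u_2 \Vert^2$ slack that the statement generously allows, while their $\Vert d u_1 \Vert^2$ part is kept small and absorbed into $\nu \Vert d u_1 \Vert^2$. The genuinely delicate term is $\langle l_3 u_1, d u_1 \rangle$, which couples $\Vert u_1 \Vert$ and $\Vert d u_1 \Vert$: splitting it produces a contribution of size $\tfrac{\Vert l_3 \Vert^2}{\nu}\Vert u_1 \Vert^2$ that must not swallow the $\beta \Vert u_1 \Vert^2$ term. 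This is exactly where the free parameter $\beta$ is used, choosing $\beta$ larger than $\Vert l_3 \Vert^2/\nu$ keeps the coefficient of $\Vert u_1 \Vert^2$ positive, at the harmless cost of enlarging the constant multiplying $\Vert u_2 \Vert^2$.

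The main obstacle, and really the only point requiring care, is this coupling induced by $l_3$: a naive choice $\beta = 1$ would force a smallness assumption $\Vert l_3 \Vert^2 < \nu$, which we do not wish to impose, and the scaling of $z_1$ by $\beta$ is precisely what removes it. It is worth stressing that the lemma deliberately does \emph{not} control $\Vert u_2 \Vert$ or $\Vert u_3 \Vert$: the velocity norm appears with a minus sign and $u_3$ does not appear at all, since those quantities are recovered separately (through the primal isomorphism $\widebar{L_1}$ and an inf-sup argument for $u_3$) in the lemmas that follow. All hidden constants depend only on $\nu$, $\Vert l_3 \Vert$ and $\Vert l_5 \Vert$, as required.
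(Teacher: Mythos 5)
Your proposal is correct and follows essentially the same route as the paper: an explicit test tuple built from $u_1$, $d u_1$, $d u_2$ and $u_p$, with the cross terms absorbed by Young's inequality and a free scalar parameter neutralizing the $l_3$-coupling (your large $\beta$ on $z_1$ is equivalent, up to rescaling $\mathbf{z}$, to the paper's small coefficient on the $(0,du_1,0,0)$ block). The only cosmetic difference is that the paper additionally inserts $P_{\mathfrak{B}}u_2$ into the second component so that $-\nu\langle u_2, d u_1\rangle$ cancels exactly, whereas you dump that term into the $-\Vert u_2 \Vert^2$ slack --- both are legitimate since the lemma permits that negative term.
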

\begin{proof}
Let $c = \max(\Vert l_3 \Vert,\Vert l_5 \Vert)$. 
We begin with some preliminary computations:
\begin{equation} \label{eq:mixedeq3}
\begin{aligned}
B(\mathbf{u},(0,0,u_p,0)) = \langle u_p,u_p\rangle ,
\end{aligned}
\end{equation}

\begin{equation} \label{eq:mixedeq2}
\begin{aligned}
B(\mathbf{u},(0,d u_1,0,0)) =&\ \nu \langle d u_1,d u_1\rangle  + \langle l_3 u_1,d u_1\rangle  + \langle l_5 u_2,d u_1\rangle  \\
\geq& \frac{1}{2} \nu \Vert d u_1 \Vert ^2 - \frac{c^2}{\nu} ( \Vert u_1 \Vert^2 + \Vert u_2 \Vert ^2 ),
\end{aligned}
\end{equation}

\begin{equation} \label{eq:mixedeq1}
\begin{aligned}
B(\mathbf{u},(\nu u_1,u_{2 \mathfrak{B}},d u_2,0)) =&\  \nu \langle u_1,u_1 \rangle - \nu \langle u_2,d u_1 \rangle + \nu \langle  d u_1,u_{2\mathfrak{B}}\rangle\\
&- \langle u_3, d u_{2\mathfrak{B}}\rangle  + \langle d u_2,d u_2\rangle + \langle l_3 u_1,u_{2 \mathfrak{B}}\rangle\\
&+ \langle l_5 u_2,u_{2 \mathfrak{B}}\rangle  +
\langle u_p,d u_2\rangle  + \langle u_3,0\rangle  \\
\geq& \frac{\nu}{2} \Vert u_1 \Vert ^2 + \Vert d u_2 \Vert ^2  - \left (\frac{c^2}{2 \nu} + c \right ) \Vert u_2 \Vert ^2.
\end{aligned}
\end{equation}
Clearly it is possible to construct a suitable $\mathbf{z}$ from a linear combination of \eqref{eq:mixedeq3}, \eqref{eq:mixedeq2} and \eqref{eq:mixedeq1}.
Bounds on norms are easily checked, for example:
\[\Vert (0, d u_1,0,0) \Vert_V = \Vert d u_1 \Vert + \Vert d d u_1 \Vert = \Vert d u_1 \Vert \lesssim \Vert \mathbf{u} \Vert_V. 
\]
\end{proof}

Combining the two preceding lemmas gives:
\begin{equation} \label{eq:pertubedwp1}
\forall \mathbf{u} \in V, \sup_{\Vert \mathbf{v} \Vert_V = 1} \vert B(\mathbf{u},\mathbf{v}) \vert \gtrsim \Vert \mathbf{u} \Vert_V\ .
\end{equation}

\begin{lemma} \label{lemma:pertubedwp2}
For any $\mathbf{v} \in V$, there is $\mathbf{u} \in V$ such that $B(\mathbf{u},\mathbf{v}) > 0$.
\end{lemma}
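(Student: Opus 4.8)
The plan is to prove the contrapositive of the required non-degeneracy condition. Since $B(\cdot,\mathbf{v})$ is linear in its first argument, the nonexistence of a $\mathbf{u}$ with $B(\mathbf{u},\mathbf{v}) > 0$ is equivalent to $B(\mathbf{u},\mathbf{v}) = 0$ for every $\mathbf{u} \in V$ (apply the hypothesis to both $\mathbf{u}$ and $-\mathbf{u}$). So I would assume $B(\mathbf{u},\mathbf{v}) = 0$ for all $\mathbf{u}$ and show $\mathbf{v} = 0$, which is exactly the injectivity that the second Babuška–Lax–Milgram condition asks for, complementing the inf-sup estimate \eqref{eq:pertubedwp1}.

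First I would test this vanishing against each factor of $V$ separately, which is a routine sequence of integration-by-parts/duality arguments. Testing with $\mathbf{u} = (u_1,0,0,0)$ forces $v_2 \in V^*_2$ with $(\nu d^* + l_3^*) v_2 = -v_1$. Testing with $(0,u_2,0,0)$ forces $v_3 \in V^*_3$ with $d^* v_3 = d v_1 - l_5^* v_2$. Testing with $(0,0,u_3,0)$ yields $v_p = d v_2$; since $d v_2 \in \mathfrak{B}^3$ while $v_p \in \mathfrak{H}^3$, the Hodge orthogonality forces $d v_2 = 0$ and $v_p = 0$. Finally, testing with $(0,0,0,u_p)$ gives $v_3 \perp \mathfrak{H}^3$.

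Then I would assemble these relations and recognize that they say precisely that $(v_2,v_3) \in D'$ and $L'(v_2,v_3) = 0$: substituting $v_1 = -(\nu d^* + l_3^*) v_2$ into the expression for $d^* v_3$ cancels the first row of $L'$, and $d v_2 = 0$ is the second row. The decisive step is to identify the functional $\widebar{L_1'}(v_2,v_3)$ with the $L^2$ pairing of $L'(v_2,v_3)$: integrating by parts in the term $\langle (\nu d^* + l_3^*) v_2, d^* v\rangle$ is legitimate exactly because $(\nu d^* + l_3^*) v_2 \in V^1$, which is the defining regularity of the domain $D'$. Hence $\widebar{L_1'}(v_2,v_3) = 0$ as a functional, and since $\widebar{L_1'}$ is a bounded isomorphism it is injective, so $(v_2,v_3) = 0$. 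Back-substitution then gives $v_1 = 0$, and we already have $v_p = 0$, so $\mathbf{v} = 0$, a contradiction.

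The main obstacle is the bookkeeping in that last identification: one must verify that the weak relations extracted by testing genuinely place $(v_2,v_3)$ in $D'$ (so the integration by parts is valid) and that the two rows of $L'$ match the two slots of $\widebar{L_1'}(v_2,v_3)$, after which the previously established isomorphism property of $\widebar{L_1'}$ closes the argument. An alternative, more constructive route would mirror Lemma \ref{lemma:defz} on the dual side, using $\widebar{L_1}^{-1}$ to build an explicit $\mathbf{u}$ from $\mathbf{v}$; but the injectivity argument above is shorter, since the non-degeneracy condition requires only existence and no quantitative norm bound.
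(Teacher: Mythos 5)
Your proof is correct, but it takes a genuinely different route from the paper's. The paper argues constructively on the primal side: given $\mathbf{v}\neq 0$, if $v_2=v_3=v_p=0$ it takes $\mathbf{u}=(v_1,0,0,0)$, and otherwise it sets $(u_2,u_3)=\widebar{L_1}^{-1}(v_2,P_{\mathfrak{H}^\perp}v_3)+(0,v_p)$, $u_p=P_{\mathfrak{H}}v_3$ and $u_1=d^*u_2$ --- with $u_1\in V^1$ guaranteed precisely by the regularity assumption \eqref{eq:regularity35} --- and then verifies that $B(\mathbf{u},\mathbf{v})=\Vert v_2\Vert^2+\Vert v_3\Vert^2+\Vert v_p\Vert^2>0$. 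You instead prove the contrapositive via the dual operator: assuming $B(\cdot,\mathbf{v})\equiv 0$, your four tests correctly extract $v_2\in V^*_2$ with $(\nu d^*+l_3^*)v_2=-v_1\in V^1$, $v_3\in V^*_3\cap\mathfrak{H}^{3\perp}$ with $d^*v_3=dv_1-l_5^*v_2$, and $dv_2=v_p=0$ by Hodge orthogonality; these place $(v_2,v_3)$ in $D'$, the adjoint identity $\langle (\nu d^*+l_3^*)v_2,d^*v\rangle=\langle d(\nu d^*+l_3^*)v_2,v\rangle$ is legitimate exactly as you say, and $\widebar{L_1'}(v_2,v_3)(v,q)=-\langle dv_1,v\rangle+\langle dv_1,v\rangle+0=0$, so injectivity of $\widebar{L_1'}$ gives $(v_2,v_3)=0$, hence $v_1=-(\nu d^*+l_3^*)v_2=0$ and $\mathbf{v}=0$. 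Both arguments rest on the standing assumption, made after Theorem \ref{th:Liso}, that $\widebar{L_1}$ and $\widebar{L_1'}$ are bounded isomorphisms, but they use complementary halves of it: the paper needs surjectivity of $\widebar{L_1}$ \emph{plus} the extra regularity $d^*(\widebar{L_1}^{-1})_2(W)\subset V^1$ from \eqref{eq:regularity35}, whereas your argument needs only injectivity of $\widebar{L_1'}$ and never invokes \eqref{eq:regularity35}, so it is slightly more economical in hypotheses for this lemma (that assumption remains essential elsewhere, e.g.\ for the inf-sup bound via Lemma \ref{lemma:defz}). What the paper's construction buys in exchange is an explicit test element and a sum-of-squares identity, in the same constructive template it reuses on the dual side in Lemma \ref{lemma:defz} and in the discrete analysis; indeed, the ``alternative, more constructive route'' you sketch at the end is precisely the paper's proof, transposed from $\widebar{L_1'}$ to $\widebar{L_1}^{-1}$. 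One cosmetic point: the paper's displayed value $\langle q,q\rangle$ in its proof is a typo for $\langle v_p,v_p\rangle$, consistent with the computation above.
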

\begin{proof}
Given $\mathbf{v} \neq 0 \in V$, if $v_2 = 0$, $v_3 = 0$ and $v_p = 0$
take $\mathbf{u} = (v_1,0,0,0)$, then 
\[B(\mathbf{u},\mathbf{v}) = \langle v_1,v_1\rangle  > 0.
\]
Else take $(u_2,u_3) = \widebar{L_1}^{-1}(v_2,P_{\mathfrak{H}^\perp}v_3) + (0,v_p)$,
$u_p = P_\mathfrak{H} v_3$ and $u_1 = d^* u_2$ ($u_1 \in V^1$ by assumption \eqref{eq:regularity35})
then \[B(\mathbf{u},\mathbf{v}) = \langle v_2,v_2\rangle  + \langle v_3,v_3\rangle  + \langle q,q\rangle  > 0.
\]
\end{proof}

\subsection{Discrete well-posedness}\label{Discretewellposedness}

We introduce the notation $V_h = V_h^1 \times V_h^2 \times V_h^3 \times \mathfrak{H}^3_h$.
The discrete variational problem is the same as the continuous one, 
replacing $V$ by $V_h$.
Hence we shall still use the notation $B$, this time as a function 
from $V_h \times V_h$ to $\mathbb{R}$.
So that the discrete problem is:
Given $(f_2,f_3) \in W$, find $\mathbf{u}_h := (u_{1h},u_{2h},u_{3h},u_{ph}) \in V^1_h \times V^2_h \times V^3_h \times \mathfrak{H}^3_h $ such that 
$\forall \mathbf{v}_h := (v_{1h},v_{2h},v_{3h},v_{ph}) \in V^1_h \times V^2_h \times V^3_h \times \mathfrak{H}^3_h$,
\begin{equation} \label{eq:defmixeddiscrete}
B(\mathbf{u}_h,\mathbf{v}_h) = \langle f_2,v_{2h}\rangle  + \langle f_3,v_{3h}\rangle.
\end{equation}

Considering the dual of problem \eqref{eq:linearsteadypnotation} with $\nu = 1$,
we have: 
$D_0 := \lbrace (u,p) \in (V^2 \cap V^*_2) \times V^3_* \vert\  d^*u \in V^1 \rbrace$ and
$L_0'(u,p) := (dd^* u + d^*p,-du)$.
Let $K_0$ be the solution operator of the dual problem.
We have $K_0 = (L_0')^{-1}$ when $L_0'$ is viewed as an isomorphism from $P_{\mathfrak{H}^\perp} D_0$ to $P_{\mathfrak{H}^\perp} (W^2 \times W^3)$, and $K_0$ is extended by $0$ on $\mathfrak{H}$.
Explicitly we have the decomposition: $\forall (f_2,f_3) \in W^2 \times W^3$,
\begin{equation*} 
 (f_2,f_3) = (d d^* (K_0)_2(f_2,f_3) + d^* (K_0)_3(f_2,f_3),-d (K_0)_2(f_2,f_3)) + (P_{\mathfrak{H}} f_2,P_{\mathfrak{H}} f_3)
\end{equation*}
and a similar expression for their discrete counterparts.
Therefore, 
\begin{equation} \label{eq:decompK}
\forall (z_2,z_3) \in D_0,\ (P_{\mathfrak{H}^\perp} z_2,P_{\mathfrak{H}^\perp} z_3) = L_0' K_0 (z_2,z_3) = K_0 L_0' (z_2,z_3).
\end{equation}

As mentioned before this problem is closely related to the one studied by Arnold  \& Li \cite{Arnold2016}.
Since the mixed variable part is almost unchanged we shall
use the generalized canonical projection $\Pi_h$ (see \cite{Arnold2016}) and
we state its properties below.

\begin{lemma} Under the condition of \cite[Theorem~5.1]{Arnold2016}: \label{lemma:PIh}
\begin{itemize}
\item $\Pi_h$ is a projection uniformly bounded in the V-norm.
\item $d \Pi_h = P_{\mathfrak{B}_h} d$.
\item $\forall w \in V^k, \Vert (I - \Pi_h)w \Vert \lesssim \Vert (I - \pi_h)w \Vert + \eta_0' \Vert d w \Vert$.
\item $\forall w,v \in V^k, \vert \langle(I - \Pi_h)w,v\rangle \vert \lesssim  (\Vert (I - \pi_h) w \Vert + \eta_0' \Vert d w \Vert)(\Vert (I - \pi_h) v \Vert + \eta_0' \Vert d v \Vert) + \alpha_0' \Vert dw \Vert \Vert dv \Vert$.
\end{itemize}
\end{lemma}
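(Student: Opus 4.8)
The plan is to obtain the four properties directly from the construction of the generalized canonical projection in \cite[Theorem~5.1]{Arnold2016}, the essential task being to check that our functional setting meets the hypotheses required there. Concretely, the ingredients needed by Arnold \& Li are a bounded commuting cochain projection together with the compactness property of the complex; both are among our standing assumptions in Section~\ref{Notations}, namely the existence of $\pi_h^k$ bounded in the $W$-norm uniformly in $h$ and the compactness of the inclusion $V^k \cap V_k^* \subset W^k$. Once these are verified, I would recall the definition of $\Pi_h$, which is built from $\pi_h$ together with the discrete Hodge decomposition $W^k = \mathfrak{B}_h^k \operp \mathfrak{H}_h^k \operp \mathfrak{B}^{*}_{h,k}$ and the associated discrete Poincaré (partial inverse) operators, so that $\Pi_h$ reproduces discrete forms.

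For the first three items I would argue structurally. Idempotency and uniform $V$-boundedness follow from the fact that $\Pi_h$ is built as a composition of the $W$-bounded projection $\pi_h$ with the discrete Poincaré operators, which are $V$-bounded uniformly in $h$ by the discrete Poincaré inequality recorded in Section~\ref{Notations} (constants controlled by $c_p^k\,\Vert \pi_h^k \Vert_{W^k}$); reproduction of $V_h$ then gives $\Pi_h^2 = \Pi_h$. The commuting relation $d\Pi_h = P_{\mathfrak{B}_h} d$ is the defining feature of the construction: since $\Pi_h w \in V_h$ we have $d\Pi_h w \in \mathfrak{B}_h$, and the correction terms are chosen precisely so that $\langle d\Pi_h w - dw, b_h \rangle = 0$ for all $b_h \in \mathfrak{B}_h$, which identifies $d\Pi_h w$ as the $L^2$-projection of $dw$ onto $\mathfrak{B}_h$.

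The two approximation estimates are where the real work lies. For item~3 I would split $w$ according to the Hodge decomposition and estimate $(I-\Pi_h)w$ componentwise: the closed part is controlled by the approximation property of $\pi_h$, giving the $\Vert (I-\pi_h)w\Vert$ term, while the remaining part is handled through the Poincaré inequality and the discrete Poincaré operators, whose approximation defect is exactly the quantity $\eta_0'$ multiplying $\Vert dw \Vert$. For item~4, the duality estimate, I would exploit that $I - \Pi_h$ annihilates $V_h$ and test against $v$ after subtracting an arbitrary discrete form; the product structure on the right-hand side then emerges by applying the item~3 bound to both $w$ and $v$, while the extra cross term $\alpha_0'\Vert dw\Vert\Vert dv\Vert$ collects the contribution of the non-commuting part, estimated via the regularity afforded by the compactness property.

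The main obstacle I anticipate is item~4: unlike a plain Cauchy--Schwarz bound it requires a genuine duality argument that simultaneously tracks the two small quantities $\eta_0'$ and $\alpha_0'$ and shows that the troublesome cross term decouples into the stated product form, with all constants uniform in $h$ (governed only by $c_p$ and $\Vert \pi_h \Vert_{W}$). Since this is carried out in \cite[Theorem~5.1]{Arnold2016} in an abstract Hilbert-complex setting that subsumes ours, the cleanest route is to invoke that theorem once the hypotheses are matched, rather than to reprove the estimates from scratch.
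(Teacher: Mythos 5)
Your proposal takes essentially the same route as the paper: the paper offers no independent proof of this lemma, stating it under the hypotheses of \cite[Theorem~5.1]{Arnold2016} and deferring the construction of $\Pi_h$, the estimates, and the quantities $\eta_0', \alpha_0'$ entirely to that reference, which is exactly your conclusion after checking that the standing assumptions of Section~\ref{Notations} (uniformly $W$-bounded cochain projection $\pi_h$ and the compactness property) match the hypotheses there. Your sketch of the internal mechanics is supplementary and consistent with the cited construction, so invoking the theorem directly is the correct and intended argument.
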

Here $\eta_0', \alpha_0' \to 0$ when $h \to 0$. They are given, along the proof in the reference \cite{Arnold2016}.

\begin{definition} \label{def:estimates}
We shall use the following notation in this section:
\[ \delta_0 := \Vert (I - \pi_h) K_0 \Vert,\ \mu_0 := \Vert (I - \pi_h) P_\mathfrak{H} \Vert, 
\]
\[ \eta_0 := \max \lbrace \Vert (I - \pi_h) d K_0 \Vert,\Vert (I - \pi_h) d^* (K_0)_2 \Vert \rbrace, 
\]
\[ \alpha_0 := \eta_0(1 + \eta_0) + \mu_0 + \delta_0 + \mu_0 \delta_0 + \eta_0', 
\]
\[ \eta := \max \lbrace \delta_0, \mu_0, \eta_0,\Vert (I - \pi_h) l_3^* (K)_2 \Vert,\Vert (I - \pi_h)dl_3^* (K)_2 \Vert\rbrace. 
\]
\end{definition}

\begin{lemma} \label{lemma:estimatesforzh}
The following bounds hold: 
\[\Vert K_0 - K_{0h}P_h \Vert \lesssim \alpha_0, 
\]
\[ \Vert d K_0 - d K_{0h} P_h \Vert + \Vert d^* (K_0)_2 - d_h^* (K_{0h})_2 P_h \Vert \lesssim \eta_0. 
\]
\end{lemma}
\begin{proof}
Let $(f_2,f_3) \in (W^2 \times W^3)$.
The idea is to apply the error estimate of Theorem \ref{th:unpertubedestimate} for $(u_2,u_3) = K_0 (f_2,f_3)$,
$(\phi_2,\phi_3) = P_{\mathfrak{H}}(f_2,f_3)$, $u_1 = d^* u_2$, $(u_{2h},u_{3h}) = K_{0h}P_h (f_2,f_3)$,
$(\phi_{2h},\phi_{3h}) = P_{\mathfrak{H}_h} P_h (f_2,f_3)$, $u_{1h} = d_h^* u_{2h}$. Unfortunately we cannot conclude with the crude estimate of Theorem \ref{th:unpertubedestimate}  
because of the error on $u_1$.
We need improved estimates that give
\begin{equation*}
\begin{aligned}
\Vert u_2 - u_{2h} \Vert + \Vert u_3 - u_{3h} \Vert &\lesssim  (1 + \mu_0) E(u_2) + E(u_3) + \eta_0 E(u_1)\\
&\+ (\eta_0^2 + \delta_0 + \eta_0') E(d u_1) + \eta_0' E(\phi_2) + E(d u_2),\\
\Vert d u_2 - d u_{2h} \Vert + \Vert u_1 - u_{1h} \Vert &\lesssim E(d u_2) + E(u_1) + \eta_0 E(d u_1).
\end{aligned}
\end{equation*}
We conclude since $E(u_2) + E(u_3) \leq \delta_0 \Vert (f_2,f_3) \Vert$, $E(u_1) + E(d u_2) \leq \eta_0 \Vert (f_2,f_3) \Vert$ and 
$E(d u_1) + E(\phi_2) \leq \Vert (f_2,f_3) \Vert$, the last coming from $d u_1 = P_\mathfrak{B} f_2$.
These proofs are technical and mostly follow those of \cite[Theorem 3.11]{Arnold_2010},
see Appendix \ref{Improvederrorestimates}.
\end{proof}

\begin{lemma} \label{lemma:erroronharmonics}

For $f \in \mathfrak{H}^\perp$ we have $\Vert P_{\mathfrak{H}_h} f \Vert \lesssim \mu_0 \Vert f \Vert$.
\end{lemma}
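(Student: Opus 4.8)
The plan is to reduce the estimate to a bound on the gap between the discrete and continuous spaces of harmonic forms, and then to establish that gap linearly in $\mu_0$ by exploiting that $\mathfrak{H}$ and $\mathfrak{H}_h$ have the same dimension.

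First, since $P_{\mathfrak{H}_h}$ is the $L^2$-orthogonal projection,
\[
\Vert P_{\mathfrak{H}_h} f \Vert = \sup_{g \in \mathfrak{H}_h,\ \Vert g \Vert = 1} \langle f, g \rangle .
\]
A discrete harmonic form $g$ is a cocycle of the subcomplex, so $g \in \mathfrak{Z}_h \subset \mathfrak{Z}$ and the continuous Hodge decomposition gives $g = P_{\mathfrak{B}} g + P_{\mathfrak{H}} g$. As $f \perp \mathfrak{H}$ the harmonic part drops out, leaving $\langle f, g \rangle = \langle f, P_{\mathfrak{B}} g \rangle \le \Vert f \Vert\, \Vert P_{\mathfrak{B}} g \Vert$. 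Hence it is enough to prove the gap estimate $\Vert (I - P_{\mathfrak{H}}) g \Vert = \Vert P_{\mathfrak{B}} g \Vert \lesssim \mu_0 \Vert g \Vert$ for every $g \in \mathfrak{H}_h$.

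Next I would treat the easy direction of the gap. For $q \in \mathfrak{H}$ the cochain property yields $d \pi_h q = \pi_h\, dq = 0$, so $\pi_h q \in \mathfrak{Z}_h = \mathfrak{B}_h \operp \mathfrak{H}_h$. Using $P_{\mathfrak{H}_h} \pi_h q$ as a competitor together with $q \perp \mathfrak{B} \supset \mathfrak{B}_h$, a short Pythagorean computation gives
\[
\Vert (I - P_{\mathfrak{H}_h}) q \Vert \le \Vert (I - \pi_h) q \Vert \le \mu_0 \Vert q \Vert .
\]
Finally I would transfer this bound to $\mathfrak{H}_h$. Under the standing hypotheses the bounded cochain projection forces $\dim \mathfrak{H} = \dim \mathfrak{H}_h$, so the previous estimate (for $\mu_0 < 1$) makes $P_{\mathfrak{H}_h} : \mathfrak{H} \to \mathfrak{H}_h$ an injective, hence bijective, map with $\Vert P_{\mathfrak{H}_h} q \Vert \ge \sqrt{1 - \mu_0^2}\, \Vert q \Vert$. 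Writing $g = P_{\mathfrak{H}_h} q$ with $q \in \mathfrak{H}$ and $\Vert q \Vert \le (1 - \mu_0^2)^{-1/2} \Vert g \Vert$, I get $\Vert (I - P_{\mathfrak{H}}) g \Vert \le \Vert g - q \Vert = \Vert (I - P_{\mathfrak{H}_h}) q \Vert \le \mu_0 \Vert q \Vert \lesssim \mu_0 \Vert g \Vert$, which closes the reduction of the first step.

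The main obstacle is exactly this last step: bounding $\Vert P_{\mathfrak{B}} g \Vert$ directly (e.g.\ from the identity $\Vert P_{\mathfrak{B}} g \Vert^2 = -\langle g, (I - \pi_h) P_{\mathfrak{H}} g \rangle$, which follows since $\pi_h g = g$) only yields the suboptimal $\sqrt{\mu_0}$, because $\pi_h$ is not $L^2$-orthogonal. The sharp linear dependence on $\mu_0$ requires the equal-dimension isomorphism argument above, or equivalently citing the corresponding gap estimate in \cite{feec-cbms, Arnold_2010}.
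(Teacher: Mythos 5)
Your proof is correct, but it takes a genuinely different route from the paper. The paper proves Lemma \ref{lemma:erroronharmonics} by running the source $f$ through the mixed Hodge--Laplacian problem: in that formulation the third variable is precisely the harmonic part of the data, so $p = P_{\mathfrak{H}} f = 0$ and $p_h = P_{\mathfrak{H}_h} f$, and the quoted FEEC error estimate $\Vert p - p_h \Vert \lesssim E(p) + \mu_0 E(d\sigma)$ together with well-posedness ($\Vert d\sigma \Vert \lesssim \Vert f \Vert$) gives the claim in one line. You instead reduce by duality to the gap estimate $\Vert P_{\mathfrak{B}}\, g \Vert \lesssim \mu_0 \Vert g \Vert$ for $g \in \mathfrak{H}_h$ (using $g \in \mathfrak{Z}_h \subset \mathfrak{Z} = \mathfrak{B} \operp \mathfrak{H}$ and $f \perp \mathfrak{H}$) and prove it by the classical harmonic-gap argument: the one-sided bound $\Vert (I - P_{\mathfrak{H}_h}) q \Vert \leq \Vert (I - \pi_h) q \Vert \leq \mu_0 \Vert q \Vert$ for $q \in \mathfrak{H}$ (your Pythagorean computation is the standard one and is sound, resting on $q \perp \mathfrak{B} \supset \mathfrak{B}_h$), then the transfer to $\mathfrak{H}_h$ via equal dimensions and the lower bound $\Vert P_{\mathfrak{H}_h} q \Vert \geq \sqrt{1-\mu_0^2}\, \Vert q \Vert$. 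This is essentially the gap bound between $\mathfrak{H}$ and $\mathfrak{H}_h$ from \cite{Arnold_2010} and \cite{feec-cbms}, so your argument is more elementary and self-contained than the paper's---it avoids the Hodge-Laplace well-posedness and error-estimate machinery entirely and produces the explicit constant $\mu_0 (1-\mu_0^2)^{-1/2}$---whereas the paper's proof is shorter because it leans on results it must cite anyway. Note also that your dimension count is nearly self-contained: the inequality $\dim \mathfrak{H}_h \leq \dim \mathfrak{H}$ follows because $\pi_h$, being a cochain projection restricting to the identity on $V_h$, induces a surjection on cohomology, while your lower bound supplies the injectivity of $P_{\mathfrak{H}_h}\vert_{\mathfrak{H}}$ giving the reverse inequality. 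Your closing remark is also accurate: the identity $\Vert P_{\mathfrak{B}}\, g \Vert^2 = -\langle g, (I-\pi_h) P_{\mathfrak{H}}\, g \rangle$ (valid since $\pi_h g = g$ and $g \perp \mathfrak{B}_h \ni \pi_h P_{\mathfrak{B}}\, g$) only yields the suboptimal $\sqrt{\mu_0}$.

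One small point to tie up for a fully uniform constant: your factor $(1-\mu_0^2)^{-1/2}$, and the injectivity above, require $\mu_0$ bounded away from $1$. This costs nothing: when, say, $\mu_0 \geq 1/2$ the trivial bound $\Vert P_{\mathfrak{H}_h} f \Vert \leq \Vert f \Vert \leq 2 \mu_0 \Vert f \Vert$ already gives the statement, and when $\mu_0 < 1/2$ your constant is uniformly bounded; with that case distinction the proof is complete.
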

\begin{proof}
We recall the mixed formulation for the Hodge-Laplacian problem.
The bilinear form is given by:
\[B((\sigma,u,p),(\tau,v,q)) = \langle \sigma,\tau\rangle  - \langle u,d \tau\rangle  + \langle d \sigma,v\rangle  + \langle d u,d v\rangle  + \langle p,v\rangle  + \langle u,q\rangle .
\]
In the continuous case the bilinear form acts on $(V^1 \times V^2 \times \mathfrak{H}^2)^2$
and on $(V^1_h \times V^2_h \times \mathfrak{H}_h^2)^2$ in the discrete case.
Let $(\sigma,u,p) \in V^1 \times V^2 \times \mathfrak{H}^2$ be such that $\forall (\tau,v,q) \in V^1 \times V^2 \times \mathfrak{H}^2$, $B((\sigma,u,p),(\tau,v,q)) = (f,v)$,
and let $(\sigma_h,u_h,p_h) \in V^1_h \times V^2_h \times \mathfrak{H}^2_h$ be such that $\forall (\tau,v,q) \in V^1_h \times V^2_h \times \mathfrak{H}^2_h$, $B((\sigma_h,u_h,p_h),(\tau,v,q)) = (f,v)$.
Then \cite[Theorem~5.6 p.~62]{feec-cbms} gives the error estimate $\Vert p - p_h \Vert \lesssim E(p) + \mu_0 E(d \sigma)$.
We have $P_{\mathfrak{H}} f = p = 0$, $P_{\mathfrak{H}_h} f = p_h$ and $E(p) = 0$. 
The well-posedness of the Hodge-Laplacian problem gives:
\[\Vert P_{\mathfrak{H}_h} f \Vert = \Vert p - p_h \Vert \lesssim 0 + \mu_0 E(d \sigma) \lesssim \mu_0 \Vert f \Vert.
\]
\end{proof}

\begin{theorem} \label{th:estimatezh}
For $z = (z_2,z_3) \in D_0$, let $z_h = (z_{2h},z_{3h}) := K_{0h} P_h L_0' z + P_{\mathfrak{H}_h} P_\mathfrak{H} z$. It holds:
\[ \Vert z - z_h \Vert \lesssim \alpha_0 \Vert L_0' z\Vert,\ \Vert d (z - z_h) \Vert + \Vert d^* z_2 - d^*_h z_{2h} \Vert \lesssim \eta_0 \Vert L_0' z \Vert, 
\]
\[ \Vert P_h (d d^* z_2 + d^* z_3) - (d d^*_h z_{2h} + d^*_h z_{3h}) \Vert \leq \mu_0 \Vert L_0' z \Vert.
\]
\end{theorem}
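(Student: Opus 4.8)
The plan is to reduce all three bounds to the estimates already established in Lemma \ref{lemma:estimatesforzh} and Lemma \ref{lemma:erroronharmonics}, using the Hodge decomposition \eqref{eq:decompK} and its discrete counterpart to split $z$ and $z_h$ into non-harmonic and harmonic pieces. By \eqref{eq:decompK} we have $z = K_0 L_0' z + P_{\mathfrak{H}} z$, while by definition $z_h = K_{0h} P_h L_0' z + P_{\mathfrak{H}_h} P_{\mathfrak{H}} z$. Subtracting gives the clean splitting
\[
z - z_h = (K_0 - K_{0h} P_h) L_0' z + (I - P_{\mathfrak{H}_h}) P_{\mathfrak{H}} z,
\]
in which the first summand is governed by Lemma \ref{lemma:estimatesforzh} and the second is a pure harmonic-approximation error. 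For the first bound I would estimate $\Vert (K_0 - K_{0h} P_h) L_0' z \Vert \lesssim \alpha_0 \Vert L_0' z \Vert$ directly from Lemma \ref{lemma:estimatesforzh}, and control the harmonic remainder $\Vert (I - P_{\mathfrak{H}_h}) P_{\mathfrak{H}} z \Vert$ at order $\mu_0$ through the gap between $\mathfrak{H}$ and $\mathfrak{H}_h$ (the same mechanism underlying Lemma \ref{lemma:erroronharmonics} and the definition of $\mu_0$); since $\mu_0 \leq \alpha_0$, both contributions fit the claimed bound.

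For the second bound I would apply $d$ and $d^*$ to the splitting. The key simplification is that the harmonic remainder disappears under both operators: $d P_{\mathfrak{H}} z = 0$ and $d P_{\mathfrak{H}_h}(\cdot) = 0$ because harmonic and discrete-harmonic forms are closed, while the $2$-form harmonic components lie in the kernel of $d^*$ and $d_h^*$ respectively. Hence $d(z - z_h) = d(K_0 - K_{0h} P_h) L_0' z$ and $d^* z_2 - d_h^* z_{2h} = (d^*(K_0)_2 - d_h^*(K_{0h})_2 P_h) L_0' z$, and the second line of Lemma \ref{lemma:estimatesforzh} bounds both by $\eta_0 \Vert L_0' z \Vert$.

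The third bound is where the discrete Hodge decomposition does the real work. Writing $a := d d^* z_2 + d^* z_3$ for the first component of $L_0' z$, I would first observe that $a \in \mathfrak{H}^\perp$, since $d d^* z_2 \in \mathfrak{B}^2$ and $d^* z_3 \in \mathfrak{B}^*_2$ are both orthogonal to $\mathfrak{H}^2$. Next, because $d_h^*$ annihilates discrete harmonic forms, the part $P_{\mathfrak{H}_h} P_{\mathfrak{H}} z$ of $z_h$ drops out of $d d_h^* z_{2h} + d_h^* z_{3h}$, and the discrete analogue of \eqref{eq:decompK} applied to $P_h L_0' z$ yields $d d_h^* z_{2h} + d_h^* z_{3h} = P_h a - P_{\mathfrak{H}_h} P_h a$. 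The quantity to estimate then collapses to $P_h a - (P_h a - P_{\mathfrak{H}_h} P_h a) = P_{\mathfrak{H}_h} P_h a$, and since $\mathfrak{H}_h \subset V_h$ forces $P_{\mathfrak{H}_h} P_h = P_{\mathfrak{H}_h}$, this equals $P_{\mathfrak{H}_h} a$. Lemma \ref{lemma:erroronharmonics}, applied to $a \in \mathfrak{H}^\perp$, then delivers $\Vert P_{\mathfrak{H}_h} a \Vert \lesssim \mu_0 \Vert a \Vert \leq \mu_0 \Vert L_0' z \Vert$.

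The main obstacle is the bookkeeping in the third bound: correctly rewriting $d d_h^* z_{2h} + d_h^* z_{3h}$ through the discrete decomposition and verifying that each harmonic and discrete-harmonic term vanishes in the right place. By contrast, once the splitting is in hand the first two bounds follow almost immediately from Lemma \ref{lemma:estimatesforzh}, the only delicate point being the harmonic remainder in the first estimate, which I would verify separately is of order $\mu_0$ and hence absorbed into $\alpha_0$.
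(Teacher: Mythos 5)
Your proposal is correct and takes essentially the same route as the paper's proof: the identical splitting $z - z_h = (K_0 - K_{0h}P_h)L_0'z + (I - P_{\mathfrak{H}_h})P_{\mathfrak{H}}z$, the first two bounds read off from Lemma \ref{lemma:estimatesforzh} after noting that $d$, $d^*$ and $d_h^*$ annihilate the harmonic remainder, and the third bound reduced via the discrete analogue of \eqref{eq:decompK} to $\Vert P_{\mathfrak{H}_h}(L_0')_2 z\Vert$ and finished with Lemma \ref{lemma:erroronharmonics}. The only cosmetic difference is that the paper makes the harmonic-remainder estimate explicit, using $P_{\mathfrak{H}_h}P_{\mathfrak{H}} = P_{\mathfrak{Z}_h}P_{\mathfrak{H}}$ and $\pi_h\mathfrak{Z}\subset\mathfrak{Z}_h$ to get $\Vert(I-P_{\mathfrak{H}_h})P_{\mathfrak{H}}z\Vert \leq \Vert(I-\pi_h)P_{\mathfrak{H}}z\Vert \leq \mu_0\Vert z\Vert$, which is exactly the mechanism you invoke.
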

\begin{proof}
The same proof as \cite[Theorem~5.2]{Arnold2016} works.
Starting from
\begin{equation*}
\begin{aligned}
z - z_h &= (P_{\mathfrak{H}^\perp} z - P_{\mathfrak{H}_h^\perp} z_h) +
(P_{\mathfrak{H}} z - P_{\mathfrak{H}_h} z_h) \\
&= ( K_0 - K_{0h} P_h) L_0' z + (I - P_{\mathfrak{H}_h})P_\mathfrak{H} z,
\end{aligned}
\end{equation*}
$P_{\mathfrak{H}_h} P_{\mathfrak{H}} = P_{\mathfrak{Z}_h} P_{\mathfrak{H}}$ and $\pi_h \mathfrak{Z} \subset \mathfrak{Z}_h$ we infer:
\[\Vert (I - P_{\mathfrak{H}_h})P_\mathfrak{H} z\Vert \leq \Vert (I - \pi_h) P_\mathfrak{H} z \Vert \leq \mu_0 \Vert z \Vert \lesssim \alpha_0 \Vert L_0' z \Vert. 
\]
The first two estimates follow from Lemma \ref{lemma:estimatesforzh}.
For the last estimate, \eqref{eq:decompK} gives 
\begin{equation*}
d d_h^* z_{2h} + d_h^* z_{3h} = (L_{0h}')_2 z_h = (L_{0h}' K_{0h} P_h)_2 L_0' z = (P_{\mathfrak{B}_h} + P_{\mathfrak{B}^*_h}) P_h (L_0')_2 z,
\end{equation*}
\begin{equation*}
\begin{aligned}
\Vert P_h (d d^* z_2 + d^* z_3) - (d d^*_h z_{2h} + d^*_h z_{3h}) \Vert &= 
\Vert (I - (P_{\mathfrak{B}_h} + P_{\mathfrak{B}^*_h})) P_h (L_0')_2 z \Vert \\
& = \Vert P_{\mathfrak{H}_h} (L_0')_2 z \Vert.
\end{aligned}
\end{equation*}
We conclude with Lemma \ref{lemma:erroronharmonics}.
\end{proof} 

Given $\mathbf{u} \in V_h$,
we define $g := (u_2,P_{\mathfrak{H}^\perp}u_3)$, $z: = Kg$, $\xi := -(d^* + l_3^*)z_2$ and $\mathbf{z} := (\xi,z,P_\mathfrak{H} u_3)$.
\begin{theorem} \label{th:discreteproj}
There is $\mathbf{z}_h \in V_h$ such that $\forall \mathbf{\omega} \in V_h$,
$\Vert \mathbf{z}_h \Vert_V \lesssim  \Vert \mathbf{z} \Vert_V$ uniformly in h and
$\vert B(\mathbf{\omega},\mathbf{z} - \mathbf{z}_h) \vert \leq \epsilon_h \Vert \mathbf{\omega} \Vert_V \Vert \mathbf{u} \Vert$,
where $\epsilon_h \rightarrow 0$ as $h \rightarrow 0$.
\end{theorem}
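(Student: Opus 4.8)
The plan is to mirror, at the discrete level, the continuous construction of Lemma~\ref{lemma:defz}, replacing the unperturbed solution operator $K_0$ by its discrete counterpart $K_{0h}$ and using the generalized canonical projection $\Pi_h$ to absorb the lower-order terms $l_3,l_5$. Concretely, I would take $(z_{2h},z_{3h}) := K_{0h} P_h L_0' z + P_{\mathfrak{H}_h} P_{\mathfrak{H}} z$, which is exactly the discrete object analysed in Theorem~\ref{th:estimatezh}, set $\xi_h$ to be the discrete analogue of $\xi = -(d^* + l_3^*) z_2$ built from the discrete codifferential $d_h^* z_{2h}$ and a projection of $l_3^* z_{2h}$, and define $\mathbf{z}_h := (\xi_h, z_{2h}, z_{3h}, P_{\mathfrak{H}_h} u_3)$. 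Since $z = Kg \in D_0$ by the regularity assumption \eqref{eq:regularity35}, Theorem~\ref{th:estimatezh} applies and delivers $\Vert z - z_h\Vert \lesssim \alpha_0\Vert L_0' z\Vert$, $\Vert d(z - z_h)\Vert + \Vert d^* z_2 - d_h^* z_{2h}\Vert \lesssim \eta_0\Vert L_0' z\Vert$ and $\Vert P_h(dd^* z_2 + d^* z_3) - (dd_h^* z_{2h} + d_h^* z_{3h})\Vert \le \mu_0\Vert L_0' z\Vert$.

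I would then reduce everything to $\Vert \mathbf{u}\Vert$. Writing $L_0' z = (dd^* z_2 + d^* z_3, -d z_2)$, the boundedness of $\Vert dd^*(K)_2\Vert$ assumed after \eqref{eq:regularity35} together with $K$ being a bounded isomorphism gives $\Vert L_0' z\Vert \lesssim \Vert g\Vert \le \Vert \mathbf{u}\Vert$; this converts every $\Vert L_0' z\Vert$ above into $\Vert \mathbf{u}\Vert$. The uniform stability $\Vert \mathbf{z}_h\Vert_V \lesssim \Vert \mathbf{z}\Vert_V$ then follows from the uniform boundedness of $K_{0h}$ (discrete well-posedness), of $\Pi_h$ and $P_{\mathfrak{H}_h}$ in the $V$-norm, and of $d_h^*$ on the discrete complex, combined with $\Vert \mathbf{z}\Vert_V \lesssim \Vert \mathbf{u}\Vert_V$ from Lemma~\ref{lemma:defz}.

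The heart of the argument is the estimate of $B(\mathbf{\omega}, \mathbf{z} - \mathbf{z}_h)$ for $\mathbf{\omega} \in V_h$. I would expand $B$ into its nine terms and exploit adjointness within the discrete complex, namely $\langle d\omega_2, z_3\rangle = \langle \omega_2, d^* z_3\rangle$ and $\langle d\omega_2, z_{3h}\rangle = \langle \omega_2, d_h^* z_{3h}\rangle$, to rewrite the $z_3$-term together with the $dd^*$-part of $-\langle \omega_2, d(\xi - \xi_h)\rangle$ as $\langle \omega_2, (dd^* z_2 + d^* z_3) - (dd_h^* z_{2h} + d_h^* z_{3h})\rangle$, which is controlled by the third bound of Theorem~\ref{th:estimatezh} after inserting $P_h$ (legitimate since $\omega_2 \in V^2_h$). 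The remaining contributions---$\langle \omega_1, \xi - \xi_h\rangle$, $\nu\langle d\omega_1, z_2 - z_{2h}\rangle$, $\langle \omega_3, d(z_2 - z_{2h})\rangle$, $\langle d\omega_2, z_3 - z_{3h}\rangle$, $\langle \omega_p, z_3 - z_{3h}\rangle$, $\langle \omega_3, P_\mathfrak{H} u_3 - P_{\mathfrak{H}_h} u_3\rangle$, the perturbation pairings $\langle l_3\omega_1 + l_5\omega_2, z_2 - z_{2h}\rangle$ and the $l_3^*$-part of $\xi - \xi_h$---are bounded by Cauchy--Schwarz using the first two estimates of Theorem~\ref{th:estimatezh}, the perturbation constant $\eta$ of Definition~\ref{def:estimates} (through $\Vert(I - \pi_h) l_3^*(K)_2\Vert$ and $\Vert(I - \pi_h) d l_3^*(K)_2\Vert$), the harmonic-form bound $\Vert P_\mathfrak{H} u_3 - P_{\mathfrak{H}_h} u_3\Vert \lesssim \mu_0\Vert u_3\Vert$ from Lemma~\ref{lemma:erroronharmonics}, and the fourth property of $\Pi_h$ in Lemma~\ref{lemma:PIh}, which is tailored precisely to these mixed $l_3$-pairings and produces the factors $\eta_0', \alpha_0'$. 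Collecting all prefactors into $\epsilon_h := C(\alpha_0 + \eta_0 + \mu_0 + \eta + \eta_0' + \alpha_0')$, each of which tends to $0$ as $h \to 0$, yields the claim.

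The main obstacle is that $\Vert \mathbf{z} - \mathbf{z}_h\Vert_V$ does \emph{not} tend to zero, so one cannot simply invoke continuity of $B$: the codifferential quantities $dd^* z_2$ and $d^* z_3$ are approximated only \emph{jointly}, in the weak $P_h$-averaged sense of Theorem~\ref{th:estimatezh}, and never individually in the $V^1$-norm, which is unavoidable since $d_h^*$ is a global operator barely related to $d^*$ (Remark~\ref{rem:removedstar}). The delicate point is therefore to organise the expansion so that the dangerous $dd^* z_2$ term cancels against the $d^* z_3$ term to form exactly the combination controlled by $\mu_0$, while the $l_3^*$ corrections are kept of order $\eta$ and no term is left paired against an uncontrolled $V^1$-norm of $z_2 - z_{2h}$. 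Verifying that every leftover genuinely carries a factor tending to zero is the crux, and it is this bookkeeping that dictates the precise choice of projections entering $\xi_h$.
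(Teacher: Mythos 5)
Your proposal is correct and follows essentially the same route as the paper's proof: the same discrete lift $z_h = K_{0h}P_h L_0' z + P_{\mathfrak{H}_h}P_{\mathfrak{H}} z$, the same three estimates from Theorem \ref{th:estimatezh} reduced to $\Vert \mathbf{u} \Vert$ via $\Vert L_0' z \Vert \lesssim \Vert g \Vert$, the same regrouping so that $dd^* z_2$ and $d^* z_3$ appear only in the $P_h$-averaged combination controlled by $\mu_0$, and the same use of $\Pi_h$ (through $d\Pi_h = P_{\mathfrak{B}_h} d$) for the $l_3^*$ correction. One small slip to fix: $\xi_h$ must be built with $\Pi_h l_3^* z_2$ using the \emph{continuous} $z_2$ --- exactly as your later appeal to $\Vert (I-\pi_h) l_3^*(K)_2 \Vert$ and $\Vert (I-\pi_h) d\, l_3^*(K)_2 \Vert$ in fact presupposes --- and not a projection of $l_3^* z_{2h}$, since $l_3^* z_{2h}$ need not lie in $V^1$ (its exterior derivative is generally not in $L^2$ for a discrete $2$-form), so $\Pi_h$ and the Lemma \ref{lemma:PIh} bounds would not apply to it.
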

\begin{proof}
Let $z_h = K_{0h} P_h L_0' z + P_{\mathfrak{H}_h} P_\mathfrak{H} z$,
$\xi_h = - d_h^* z_{2h} - \Pi_h l_3^* z_2$.
Theorem \ref{th:estimatezh} gives
\begin{equation}
\begin{gathered}
\Vert z - z_h \Vert \lesssim \alpha_0 \Vert g \Vert,\ \Vert d(z - z_h) \Vert \lesssim \eta_0 \Vert g \Vert,\  
\Vert d^* z_2 - d^*_h z_{2h} \Vert \lesssim \eta_0 \Vert g \Vert,\\
\Vert \xi - \xi_h\Vert \leq \Vert d^* z_2 - d^*_h z_{2h} \Vert + \Vert (I - \Pi_h) l_3^* z_2 \Vert.
\end{gathered}
\end{equation}
Lemma \ref{lemma:PIh} and the boundedness of $d\,l^*_3\,K$ give:
\[ \Vert (I - \Pi_h) l_3^* z_2 \Vert \lesssim \Vert (I - \pi_h)l^*_3 z_2 \Vert + \eta_0' \Vert dl^*_3z_2 \Vert \lesssim (\eta + \eta_0') \Vert g \Vert. 
\]
Finally, since $\forall \omega_2 \in V^2_h \subset V^2$
$\langle d \omega_2, z_3 \rangle = \langle \omega_2, d^* z_3 \rangle$
and $\langle d \omega_2, z_{3h} \rangle = \langle \omega_2, d^*_h z_{3h} \rangle$,
we have $\forall \mathbf{\omega} \in V_h$:
\begin{equation*}
\begin{aligned}
\vert B(\mathbf{\omega},&(\xi - \xi_h, z_2 - z_{2h}, z_3 - z_{3h},P_\mathfrak{H} u_3 - P_{\mathfrak{H}_h}P_\mathfrak{H}  u_3))\vert \\
&= \vert \langle \omega_1,\xi - \xi_h\rangle
 - \langle \omega_2,d(\xi - \xi_h)\rangle  + \langle (\nu d + l_3)\omega_1,z_2 - z_{2h}\rangle \\
&\quad - \langle \omega_3,d(z_2 - z_{2h})\rangle 
 + \langle l_5 \omega_2,z_2 - z_{2h}\rangle \\
&\quad + \langle d\omega_2, z_3 - z_{3h}\rangle
 + \langle \omega_p,z_3 - z_{3h}\rangle + \langle \omega_3,P_\mathfrak{H} u_3 
 - P_{\mathfrak{H}_h}P_\mathfrak{H} u_3 \rangle \vert\\
&\lesssim  \Vert \mathbf{\omega} \Vert_V (\Vert \xi - \xi_h \Vert 
 + 2 \Vert z_2 - z_{2h} \Vert + \Vert d(z_2 - z_{2h}) \Vert \\ 
&\quad + \Vert z_3 - z_{3h} \Vert + \Vert (I - P_{\mathfrak{H}_h}) P_\mathfrak{H} u_3 \Vert)  
+ \vert  \langle \omega_2,P_h d^* z_3 - d_h^* z_{3h} - P_h d(\xi - \xi_h)\rangle \vert \\
&\lesssim  \Vert \mathbf{\omega} \Vert_V \left[ (\eta + 2 \eta_0
+ 2 \alpha_0 + \eta_0' + \mu_0) \Vert \mathbf{u} \Vert + \Vert P_h d^* z_3 - d_h^* z_{3h} - P_h d( \xi - \xi_h) \Vert \right].
\end{aligned}
\end{equation*}
Since $\eta$, $\eta_0$, $\eta_0'$, $\mu_0$ and $\alpha_0$ all converge toward $0$ when $h \rightarrow 0$,
the only thing left to prove is that 
$\Vert P_h d^* z_3 - d_h^* z_{3h} - P_h d( \xi - \xi_h) \Vert \lesssim \epsilon \Vert \mathbf{u} \Vert$ 
where $\epsilon \rightarrow 0$ when $h \rightarrow 0$.
\pagebreak\\
To do so we start from Theorem \ref{th:estimatezh} and expand:
\[\Vert P_h d^* z_3 - d_h^*z_{3h} + P_h d d^* z_2 - d d^*_h z_{2h} \Vert \lesssim \mu_0 \Vert g \Vert,
\]
\[ -d (\xi - \xi_h) = d (d^* + l_3^*) z_2 - d d^*_h z_{2h} - d \Pi_h l^*_3 z_2 = d d^* z_2 - d d^*_h z_{2h} + d(I - \Pi_h) l^*_3 z_2 .
\]
We conclude with Lemma \ref{lemma:PIh} since we can find a bounded cochain projection $\pi_h$ 
such that $\pi_h d = P_{\mathfrak{B}_h} d$ (see \cite[Theorem~3.7]{Arnold_2010}) so
\[ \Vert d(I - \Pi_h) l_3^* z_2 \Vert = \Vert (I - P_{\mathfrak{B}_h}) d l^*_3 z_2 \Vert \lesssim \Vert (I - \pi_h) d l^*_3 z_2 \Vert \leq \eta \Vert g \Vert. 
\]
\end{proof}

\begin{lemma} \label{lemma:defzh}
For all $\mathbf{u} \in V_h$ and $\mathbf{z} \in V$ defined in Theorem \ref{th:discreteproj},
there exists $c  > 0$ independent of $h$ and $\mathbf{\sigma} \in V_h$ such that $\Vert \mathbf{\sigma} \Vert_V \lesssim \Vert \mathbf{u} \Vert_V$
and $B(\mathbf{u},\mathbf{z} + \mathbf{\sigma}) \geq c \Vert \mathbf{u}\Vert_V^2 $.
\end{lemma}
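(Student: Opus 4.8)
The plan is to reproduce, at the discrete level, the same mechanism that gave the continuous inf-sup bound \eqref{eq:pertubedwp1}, namely the superposition of Lemma \ref{lemma:defz} and Lemma \ref{lemma:continuous2}. Here half of the work is already done for us: the field $\mathbf{z}$ supplied by Theorem \ref{th:discreteproj} is exactly the one produced by Lemma \ref{lemma:defz} applied to $(0,u_2,u_3,0)$, so testing against $\mathbf{u}\in V_h\subset V$ gives $B(\mathbf{u},\mathbf{z})=\langle u_2,u_2\rangle+\langle u_3,u_3\rangle=\Vert u_2\Vert^2+\Vert u_3\Vert^2$. Thus $\mathbf{z}$ already controls $\Vert u_2\Vert^2$ and $\Vert u_3\Vert^2$, and the corrector $\sigma\in V_h$ only has to recover $\Vert u_1\Vert^2$, $\Vert du_1\Vert^2$, $\Vert du_2\Vert^2$ and $\Vert u_p\Vert^2$, at the cost of negative multiples of $\Vert u_2\Vert^2$ that the positive $\Vert u_2\Vert^2$ coming from $\mathbf{z}$ will absorb.

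First I would take $\sigma$ to be a linear combination of the discrete analogues of the three test directions used in Lemma \ref{lemma:continuous2}, namely $\sigma=a_1(0,0,u_p,0)+a_2(0,du_1,0,0)+a_3(\nu u_1,P_{\mathfrak{B}_h}u_2,du_2,0)$ with positive constants $a_1,a_2,a_3$ depending only on $\nu$ and $c:=\max(\Vert l_3\Vert,\Vert l_5\Vert)$. The single but essential modification compared with \eqref{eq:mixedeq1} is that the continuous projection $P_{\mathfrak{B}}u_2$ is replaced by the discrete one $P_{\mathfrak{B}_h}u_2$. Since we use a subcomplex, $du_1\in\mathfrak{B}_h\subset V^2_h$, $du_2\in\mathfrak{B}_h\subset V^3_h$, $u_p\in\mathfrak{H}^3_h\subset V^3_h$ and $P_{\mathfrak{B}_h}u_2\in V^2_h$, so every component of $\sigma$ genuinely lies in $V_h$.

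The main step is to evaluate $B(\mathbf{u},\sigma)$ and to check that the discrete Hodge decomposition reproduces the estimates \eqref{eq:mixedeq3}--\eqref{eq:mixedeq1}. The points to verify are $dP_{\mathfrak{B}_h}u_2=0$ (because $\mathfrak{B}_h\subset\ker d$), $\langle du_1,P_{\mathfrak{B}_h}u_2\rangle=\langle du_1,u_2\rangle$ (because $du_1\in\mathfrak{B}_h$ and $P_{\mathfrak{B}_h}$ is the $L^2$-orthogonal projection), and $\langle u_p,du_2\rangle=0$ (because $u_p\in\mathfrak{H}^3_h$ is orthogonal to $\mathfrak{B}_h\ni du_2$). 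These are precisely the orthogonalities invoked in the continuous proof, now granted by the discrete decomposition; I expect this bookkeeping — confirming that each continuous cancellation survives the passage to $P_{\mathfrak{B}_h}$ — to be the only real obstacle, and it is a mild one. Granting them, a Cauchy--Schwarz and Young estimate of the $l_3,l_5$ terms yields $B(\mathbf{u},(0,0,u_p,0))=\Vert u_p\Vert^2$, $B(\mathbf{u},(0,du_1,0,0))\geq\tfrac{\nu}{2}\Vert du_1\Vert^2-\tfrac{c^2}{\nu}(\Vert u_1\Vert^2+\Vert u_2\Vert^2)$, and $B(\mathbf{u},(\nu u_1,P_{\mathfrak{B}_h}u_2,du_2,0))\geq\tfrac{\nu}{2}\Vert u_1\Vert^2+\Vert du_2\Vert^2-(\tfrac{c^2}{2\nu}+c)\Vert u_2\Vert^2$, exactly as in \eqref{eq:mixedeq3}--\eqref{eq:mixedeq1}.

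Finally I would add $B(\mathbf{u},\mathbf{z})$ to the three contributions, obtaining a lower bound for $B(\mathbf{u},\mathbf{z}+\sigma)$ in which the coefficients of $\Vert du_1\Vert^2$, $\Vert du_2\Vert^2$, $\Vert u_3\Vert^2$ and $\Vert u_p\Vert^2$ are manifestly positive (equal to $a_2\tfrac{\nu}{2}$, $a_3$, $1$, $a_1$), while the coefficient of $\Vert u_1\Vert^2$ is $a_3\tfrac{\nu}{2}-a_2\tfrac{c^2}{\nu}$ and that of $\Vert u_2\Vert^2$ is $1-a_2\tfrac{c^2}{\nu}-a_3(\tfrac{c^2}{2\nu}+c)$, the crucial $1$ coming from $\mathbf{z}$. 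Choosing $a_2$ small, then $a_3$ with $a_3\tfrac{\nu}{2}>a_2\tfrac{c^2}{\nu}$ and $a_2,a_3$ small enough to keep $a_2\tfrac{c^2}{\nu}+a_3(\tfrac{c^2}{2\nu}+c)<1$, and any $a_1>0$, makes all coefficients positive and gives $B(\mathbf{u},\mathbf{z}+\sigma)\geq c\Vert\mathbf{u}\Vert_V^2$ with $c$ depending only on $\nu$ and $c$. The norm bound $\Vert\sigma\Vert_V\lesssim\Vert\mathbf{u}\Vert_V$ is immediate, since $\Vert(0,du_1,0,0)\Vert_V=\Vert du_1\Vert$, $\Vert(0,0,u_p,0)\Vert_V=\Vert u_p\Vert$, and $\Vert(\nu u_1,P_{\mathfrak{B}_h}u_2,du_2,0)\Vert_V\lesssim\Vert u_1\Vert_{V^1}+\Vert u_2\Vert+\Vert du_2\Vert$, where $\Vert P_{\mathfrak{B}_h}u_2\Vert_{V^2}=\Vert P_{\mathfrak{B}_h}u_2\Vert\leq\Vert u_2\Vert$ because $dP_{\mathfrak{B}_h}u_2=0$. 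This $\sigma$ is then what is fed, together with the discrete projection $\mathbf{z}_h$ of Theorem \ref{th:discreteproj}, into the discrete inf-sup argument.
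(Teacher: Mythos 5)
Your construction of $\sigma$ and the bookkeeping for the discrete Hodge decomposition are exactly what the paper does, but there is one genuine gap: the claim that $\mathbf{u}\in V_h\subset V$, and hence that $B(\mathbf{u},\mathbf{z})=\Vert u_2\Vert^2+\Vert u_3\Vert^2$. The inclusion fails in the fourth component, because $\mathfrak{H}^3_h$ is \emph{not} in general a subspace of $\mathfrak{H}^3$ (the paper's whole apparatus of $\mu_0=\Vert(I-\pi_h)P_\mathfrak{H}\Vert$ and Lemma \ref{lemma:erroronharmonics} exists precisely because discrete harmonic forms only approximate continuous ones). Tracing the computation in the proof of Lemma \ref{lemma:defz}, the identity $B(\boldsymbol{\omega},\mathbf{z})=\langle\omega_2,u_2\rangle+\langle\omega_3,u_3\rangle$ uses at one point that the term $\langle\omega_p,z_3\rangle$ vanishes, which holds because $\omega_p\in\mathfrak{H}^3$ while $z_3\in V^*_3\cap\mathfrak{H}^{3\perp}$ by the definition of $K$. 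For $\mathbf{u}\in V_h$ one has $u_p\in\mathfrak{H}^3_h\not\subset\mathfrak{H}^3$, so the computation instead yields
\begin{equation*}
B(\mathbf{u},\mathbf{z})=\Vert u_2\Vert^2+\Vert u_3\Vert^2+\langle u_p,z_3\rangle,
\end{equation*}
and the residual $\langle u_p,z_3\rangle$ has no sign and is not cancelled by any term in your $\sigma$, so your final lower bound does not follow as written.

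This is exactly the point the paper's (very terse) proof addresses: it augments $\sigma$ with the harmonic correction $(0,0,-P_{\mathfrak{H}_h}z_3,0)$, which satisfies
\begin{equation*}
B(\mathbf{u},(0,0,-P_{\mathfrak{H}_h}z_3,0))=-\langle du_2,P_{\mathfrak{H}_h}z_3\rangle-\langle u_p,P_{\mathfrak{H}_h}z_3\rangle=-\langle u_p,z_3\rangle,
\end{equation*}
using $du_2\in\mathfrak{B}_h\perp\mathfrak{H}_h$, the self-adjointness of $P_{\mathfrak{H}_h}$ and $u_p\in\mathfrak{H}_h$; its norm is harmless since $\Vert P_{\mathfrak{H}_h}z_3\Vert\leq\Vert z_3\Vert\lesssim\Vert\mathbf{u}\Vert$. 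With this extra component appended to your $\sigma$, your coefficient-balancing argument goes through verbatim. (You could alternatively try to absorb the residual via Lemma \ref{lemma:erroronharmonics}, which gives $\vert\langle u_p,z_3\rangle\vert\lesssim\mu_0\Vert u_p\Vert\,\Vert\mathbf{u}\Vert$, but since $\mu_0$ only tends to $0$ as $h\to 0$ this would prove the bound only for $h$ small enough, i.e.\ with a mesh restriction that the lemma as stated does not have; the paper deliberately reserves the smallness argument for Theorem \ref{th:wellposednesspertubeddiscrete}.) Everything else in your proposal — the discrete test fields with $P_{\mathfrak{B}_h}u_2$ in place of $P_\mathfrak{B}u_2$, the three orthogonality checks, and the choice of $a_1,a_2,a_3$ — is correct and coincides with the paper's intended construction.
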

\begin{proof}
Starting from Lemma \ref{lemma:defz}, we construct $\mathbf{\sigma}$ in the same way as we did in Lemma \ref{lemma:continuous2} in the continuous case.
We just have to correct the harmonic part adding
\[B(\mathbf{\omega},(0,0,-P_{\mathfrak{H}_h} z_3,0)) = - \langle \omega_p,z_3\rangle. 
\]
\end{proof}

\begin{theorem} \label{th:wellposednesspertubeddiscrete}
There are two positive constants $h_0$ and $C_0$ 
such that for all $h \in (0,h_0]$,
there exists a unique $\mathbf{u} \in V_h$ such that $\forall \mathbf{v} \in V_h$,
$B(\mathbf{u},\mathbf{v}) = (\mathbf{f},\mathbf{v})$.
Moreover we have $\Vert \mathbf{u} \Vert_V \leq C_0 \Vert \mathbf{f} \Vert$.
\end{theorem}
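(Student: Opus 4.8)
The plan is to establish a \emph{uniform discrete inf-sup condition} for $B$ on $V_h \times V_h$ and then exploit the fact that, in finite dimensions, a square bilinear form satisfying a one-sided inf-sup bound induces an invertible operator. Concretely, I aim to produce $\alpha > 0$ and $h_0 > 0$, both independent of $h$, such that for every $h \in (0,h_0]$,
\[
\forall \mathbf{u} \in V_h,\quad \sup_{\mathbf{v} \in V_h \setminus \{0\}} \frac{B(\mathbf{u},\mathbf{v})}{\Vert \mathbf{v} \Vert_V} \geq \alpha \Vert \mathbf{u} \Vert_V .
\]
Since $\dim V_h < \infty$ and the trial and test spaces coincide, this bound forces the linear map $\mathbf{u} \mapsto B(\mathbf{u},\cdot)$ to be injective, hence bijective; existence and uniqueness of the solution of \eqref{eq:defmixeddiscrete} follow at once, together with $\Vert \mathbf{u} \Vert_V \leq \alpha^{-1}\Vert \mathbf{f}\Vert$ by using the equation and $\vert(\mathbf{f},\mathbf{v})\vert \leq \Vert\mathbf{f}\Vert\,\Vert\mathbf{v}\Vert_V$. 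This sets $C_0 := \alpha^{-1}$, so there is no need to re-verify both Banach--Nečas--Babuška conditions separately.

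To prove the inf-sup bound, fix $\mathbf{u} \in V_h$ and let $\mathbf{z} \in V$ be the continuous element associated to it in Theorem \ref{th:discreteproj}. Lemma \ref{lemma:defzh} supplies $\sigma \in V_h$ with $\Vert \sigma \Vert_V \lesssim \Vert \mathbf{u} \Vert_V$ and $B(\mathbf{u}, \mathbf{z}+\sigma) \geq c\,\Vert \mathbf{u} \Vert_V^2$. The obstacle is that $\mathbf{z} \notin V_h$ in general, so $\mathbf{z}+\sigma$ is not an admissible test function; this is precisely the role of Theorem \ref{th:discreteproj}, which yields $\mathbf{z}_h \in V_h$ with $\Vert \mathbf{z}_h \Vert_V \lesssim \Vert \mathbf{z} \Vert_V$ and $\vert B(\mathbf{u}, \mathbf{z}-\mathbf{z}_h)\vert \leq \epsilon_h \Vert \mathbf{u} \Vert_V \Vert \mathbf{u} \Vert$ with $\epsilon_h \to 0$. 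I would then take the admissible test function $\mathbf{v} := \mathbf{z}_h + \sigma \in V_h$ and estimate
\[
B(\mathbf{u}, \mathbf{z}_h+\sigma) = B(\mathbf{u}, \mathbf{z}+\sigma) - B(\mathbf{u}, \mathbf{z}-\mathbf{z}_h) \geq c\,\Vert \mathbf{u} \Vert_V^2 - \epsilon_h \Vert \mathbf{u} \Vert_V \Vert \mathbf{u} \Vert \geq (c-\epsilon_h)\,\Vert \mathbf{u} \Vert_V^2 ,
\]
where the last step uses $\Vert \mathbf{u} \Vert \leq \Vert \mathbf{u} \Vert_V$.

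It then remains to choose $h_0$ so that $\epsilon_h \leq c/2$ for all $h \leq h_0$, giving $B(\mathbf{u},\mathbf{v}) \geq \tfrac{c}{2}\Vert \mathbf{u} \Vert_V^2$. For the denominator, chaining $\Vert \mathbf{z} \Vert_V \lesssim \Vert \mathbf{u} \Vert_V$ (from Lemma \ref{lemma:defz}, since $\mathbf{z}$ depends only on $(u_2,u_3)$ and $\Vert(0,u_2,u_3,0)\Vert_V \leq \Vert\mathbf{u}\Vert_V$), $\Vert \mathbf{z}_h \Vert_V \lesssim \Vert \mathbf{z} \Vert_V$, and $\Vert \sigma \Vert_V \lesssim \Vert \mathbf{u} \Vert_V$ gives $\Vert \mathbf{v} \Vert_V \leq C \Vert \mathbf{u} \Vert_V$. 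Dividing yields the inf-sup bound with $\alpha := c/(2C)$, uniform on $(0,h_0]$, which closes the argument.

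I expect the only genuinely delicate work to be already packaged inside Theorem \ref{th:discreteproj} — the construction of the discrete surrogate $\mathbf{z}_h$ and the proof that the consistency error $\epsilon_h$ vanishes as $h \to 0$, which rests on the regularity assumption \eqref{eq:regularity35}, the canonical projection $\Pi_h$ of Lemma \ref{lemma:PIh}, and the estimates of Theorem \ref{th:estimatezh}. Granting those, the present theorem is a short assembly: its main subtlety is that the coercivity of Lemma \ref{lemma:defzh} is stated against a \emph{continuous} supremizer, so the whole point is to transfer it to $V_h$ at the cost of a term absorbable for $h$ small, and then to invoke the finite-dimensional square-system principle.
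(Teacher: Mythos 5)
Your proposal is correct and follows essentially the same route as the paper: combining the coercivity of Lemma \ref{lemma:defzh} against the continuous supremizer $\mathbf{z}+\sigma$ with the consistency estimate of Theorem \ref{th:discreteproj} to pass to the discrete test function $\mathbf{z}_h+\sigma$, absorbing the $\epsilon_h$ term for $h \leq h_0$, bounding $\Vert \mathbf{z}_h+\sigma\Vert_V \lesssim \Vert\mathbf{u}\Vert_V$, and concluding by the finite-dimensionality of $V_h$. Your only departures are cosmetic — making the resulting uniform inf-sup constant $\alpha$ and the bound $C_0 = \alpha^{-1}$ explicit, which the paper leaves implicit.
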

\begin{proof}
For $\mathbf{u} \in V_h$, Theorem \ref{th:discreteproj} and Lemma \ref{lemma:defzh} give
$\mathbf{z} \in V$, 
$\mathbf{z}_h, \mathfrak{\sigma} \in V_h$ 
with $\Vert \mathbf{\sigma} \Vert_V \lesssim \Vert \mathbf{u} \Vert_V$ 
and two constants $b$ and $c$ independent of $h$
such that:
\[ \vert B(\mathbf{u},\mathbf{z} + \mathbf{\sigma}) \vert \geq c \Vert \mathbf{u} \Vert_V ^2, \quad 
B(\mathbf{u},\mathbf{z}_h - \mathbf{z}) \leq \epsilon_h b \Vert \mathbf{u} \Vert_V ^2. 
\]
Combining the two with a triangle inequality readily gives:
\begin{equation*}
\begin{aligned}
\vert B(\mathbf{u},\mathbf{\sigma} + \mathbf{z}_h) \vert =&\ \vert B(\mathbf{u},\mathbf{z + \sigma}) + B(\mathbf{u},\mathbf{z}_h - \mathbf{z}) \vert \\
\geq&\ \vert B(\mathbf{u},\mathbf{z} + \mathbf{\sigma}) \vert - \vert B(\mathbf{u},\mathbf{z}_h - \mathbf{z}) \vert \\
\geq&\ c \Vert \mathbf{u} \Vert_V^2 - \epsilon_h b \Vert \mathbf{u} \Vert_V^2
\geq (c - \epsilon_h b) \Vert \mathbf{u} \Vert^2_V.
\end{aligned}
\end{equation*}
Since $\epsilon_h \rightarrow 0$ as $h \rightarrow 0$ 
we can find $h_0$ such that $\forall h \in (0,h_0]$,
$c - \epsilon_h b \geq c - \epsilon_{h_0} b > 0$.

By Theorem \ref{th:discreteproj} and from the expression of $\mathbf{\sigma}$ we find:
\[\Vert \mathbf{\sigma} + \mathbf{z}_h \Vert_V \lesssim \Vert \mathbf{u} \Vert_V + \Vert \mathbf{z} \Vert_V \lesssim \Vert \mathbf{u} \Vert_V.
\]
This ends the proof since $V_h$ is of finite dimension.
\end{proof}

\begin{corollary} \label{corollary:pertubedestimate}
If assumption \eqref{eq:regularity35} holds then for $h \leq h_0$ given by Theorem \ref{th:wellposednesspertubeddiscrete},
and for $\mathbf{u}$ (resp. $\mathbf{u}_h$) the solution of the continuous problem \eqref{eq:defmixedcontinuous} (resp. of the discrete problem \eqref{eq:defmixeddiscrete}) 
it holds:
\begin{equation*}
\begin{aligned}
\Vert \mathbf{u}& - \mathbf{u}_h \Vert_V \lesssim \\
& \inf_{v_1 \in V^1_h} \Vert u_1 - v_1 \Vert_{V^1} + \inf_{v_2 \in V^2_h} \Vert u_2 - v_2 \Vert_{V^2} + E(u_3) + E(u_p) + \mu_0 E(P_\mathfrak{B} u_3).
\end{aligned}
\end{equation*}
\end{corollary}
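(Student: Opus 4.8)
The plan is to prove quasi-optimality by a Strang--C\'ea argument, using the discrete inf-sup condition already established in Theorem~\ref{th:wellposednesspertubeddiscrete} together with Galerkin orthogonality, and then to refine the resulting best-approximation bound into the stated component-wise right-hand side. The first step is to record Galerkin orthogonality. Since the discrete spaces form a subcomplex and, for the top-degree harmonic space, $\mathfrak{H}^3_h \subset \mathfrak{H}^3$, we have $V_h \subset V$, and both \eqref{eq:defmixedcontinuous} and \eqref{eq:defmixeddiscrete} are driven by the same functional $\langle f_2,\cdot\rangle + \langle f_3,\cdot\rangle$. Testing the continuous problem against an arbitrary $\mathbf{v}_h \in V_h$ and subtracting the discrete problem then gives $B(\mathbf{u} - \mathbf{u}_h,\mathbf{v}_h) = 0$ for all $\mathbf{v}_h \in V_h$. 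The harmonic coupling is consistent at this stage because the term $\langle u_3,v_{ph}\rangle$ vanishes, $u_3$ being orthogonal to $\mathfrak{H}^3 \supset \mathfrak{H}^3_h$.

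Next, for an arbitrary $\mathbf{w}_h \in V_h$ I would apply the discrete inf-sup inequality of Theorem~\ref{th:wellposednesspertubeddiscrete} to $\mathbf{u}_h - \mathbf{w}_h \in V_h$ (admissible for $h \le h_0$), replace $B(\mathbf{u}_h - \mathbf{w}_h,\mathbf{v}_h)$ by $B(\mathbf{u} - \mathbf{w}_h,\mathbf{v}_h)$ using orthogonality, and bound the latter through the uniform continuity of $B$, whose constant depends only on $\nu$, $\Vert l_3\Vert$ and $\Vert l_5\Vert$. This yields $\Vert \mathbf{u}_h - \mathbf{w}_h\Vert_V \lesssim \Vert \mathbf{u} - \mathbf{w}_h\Vert_V$, and a triangle inequality gives
\[ \Vert \mathbf{u} - \mathbf{u}_h\Vert_V \lesssim \inf_{\mathbf{w}_h \in V_h} \Vert \mathbf{u} - \mathbf{w}_h\Vert_V. \]

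It then remains to estimate this infimum, which I would do componentwise since the $V$-norm is a product norm and $V_h$ a product space: the $u_1$ and $u_2$ components contribute $\inf_{v_1}\Vert u_1 - v_1\Vert_{V^1}$ and $\inf_{v_2}\Vert u_2 - v_2\Vert_{V^2}$, while $u_3$ contributes $E(u_3)$, as $d u_3 = 0$ makes $\Vert\cdot\Vert_{V^3}$ coincide with the $L^2$-norm. The harmonic component $u_p \in \mathfrak{H}^3$ is treated exactly as in the proof of Theorem~\ref{th:unpertubedestimate}: controlling the discrepancy between $\mathfrak{H}^3$ and $\mathfrak{H}^3_h$ by means of Lemma~\ref{lemma:erroronharmonics} and \cite[Theorem~5.2]{feec-cbms} produces the remaining terms $E(u_p)$ and $\mu_0 E(P_\mathfrak{B} u_3)$. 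The main obstacle is precisely this harmonic-form bookkeeping, namely ensuring that the best approximation of $u_p$ in $\mathfrak{H}^3_h$ and the discrete harmonic projection $P_{\mathfrak{H}_h}$ of $u_3$ are controlled so that the abstract bound refines into the stated estimate; everything else is routine, the perturbations $l_3,l_5$ entering only through the continuity constant of $B$. (In the present top-degree setting one has $\mathfrak{H}^3_h = \mathfrak{H}^3$ and $\mu_0 = 0$, so these two terms actually vanish; they are retained for generality and to parallel Theorem~\ref{th:unpertubedestimate}.)
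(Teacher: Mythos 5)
Your overall route coincides with the paper's: the paper proves this corollary by remarking that the proof of Theorem~\ref{th:unpertubedestimate} carries over verbatim, i.e.\ discrete inf-sup (here supplied by Theorem~\ref{th:wellposednesspertubeddiscrete}) plus continuity of $B$ plus component-wise best approximation, with the harmonic contributions handled through \cite[Theorem~5.2]{feec-cbms} and \cite[Equation~(33)]{Arnold_2010}. The genuine gap is your first step. Exact Galerkin orthogonality rests on the claim $\mathfrak{H}^3_h \subset \mathfrak{H}^3$, which is neither assumed nor true in the paper's abstract framework: the only hypotheses are a discrete subcomplex with uniformly bounded cochain projections, under which the discrete harmonic forms merely approximate the continuous ones (with a gap of size $\mu_0$); they are not contained in them. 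Your claim does hold for the concrete spaces of Section~\ref{Spaces}, where both spaces are the (locally) constant functions, but the corollary is stated for the general framework. For $v_{ph} \in \mathfrak{H}^3_h \setminus \mathfrak{H}^3$ one gets $B(\mathbf{u}-\mathbf{u}_h,\mathbf{v}_h) = -\langle u_3, v_{ph}\rangle = -\langle P_{\mathfrak{H}_h} u_3, v_{ph}\rangle$, which need not vanish, since the continuous problem only enforces $u_3 \perp \mathfrak{H}^3$.

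This matters because the term $\mu_0 E(P_\mathfrak{B} u_3)$ in the statement is precisely this consistency error: it enters through the bound $\Vert P_{\mathfrak{H}_h} u_3 \Vert \lesssim \mu_0\, E(P_\mathfrak{B} u_3)$, exactly as at the end of the proof of Theorem~\ref{th:unpertubedestimate}. After exact orthogonality, a pure C\'ea argument can only produce best-approximation quantities; your ``harmonic bookkeeping'' step legitimately yields $E(u_p)$ (approximating $u_p \in \mathfrak{H}^3$ by elements of $\mathfrak{H}^3_h$, via \cite[Equation~(33)]{Arnold_2010}), but has no mechanism to generate $\mu_0 E(P_\mathfrak{B} u_3)$ --- and indeed your closing parenthetical, asserting that this term ``actually vanishes,'' exposes the inconsistency between your orthogonality claim and the stated estimate. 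The fix is the one you gesture at in passing: drop the inclusion $\mathfrak{H}^3_h \subset \mathfrak{H}^3$, keep the consistency term $\langle P_{\mathfrak{H}_h} u_3, v_{ph}\rangle$ in the error equation as in Theorem~\ref{th:unpertubedestimate}, and bound it by Lemma~\ref{lemma:erroronharmonics} and \cite[Theorem~5.2]{feec-cbms}. A further small slip: the scalar $\mu_0 = \Vert (I-\pi_h)P_\mathfrak{H} \Vert$ of Definition~\ref{def:estimates} also involves $2$-form harmonics as used elsewhere in Section~\ref{Discretewellposedness}, so even when $\mathfrak{H}^3_h = \mathfrak{H}^3$ the quantity $\mu_0$ itself need not be zero --- only the $3$-form consistency term does.
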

\begin{proof}
The same proof as the one of Theorem \ref{th:unpertubedestimate} works.
\end{proof}

\begin{remark}
The hidden constant of Corollary \ref{corollary:pertubedestimate} depends on $\Vert l_5 \Vert$ which, in the case of problem \eqref{eq:NS_Eulercont}, blows up when $\delta t \to 0$. 
A more subtle analysis is required to make explicit the dependency of the error on $\delta t$.
For a single time step in the setting of Corollary \ref{corollary:pertubedestimate} the error will actually decrease when $\delta t \to 0$.
We prove an estimate for the error on $u_2$ in a very general setting in Theorem \ref{th:timeindeest}.
\end{remark}

\section{Conserved quantities} \label{Conservedquantities}
Lastly we prove that our scheme does indeed verify the properties mentioned in the introduction as well as the regularity assumption 
\pagebreak
\eqref{eq:regularity35}.

\subsection{Regularity assumptions} \label{Regularityassumptions}
Problem \eqref{eq:NS_Eulercont} is a special case of Problem \eqref{eq:defmixedcontinuous} taking suitable $l_3$ and $l_5$.
We prove below that assumption \eqref{eq:regularity35} is valid if $u^{n-1} \in H^2(\Omega)$ and 
if the domain is smooth enough to have $H_0(\mathrm{curl},\Omega) \cap H(\mathrm{div},\Omega) \subset H^1(\Omega)$, as discussed in \cite[Chapter~3.2]{GiraultRaviart}.
We use the notation $H^1(\Omega)$ both for scalar and for vector fields.
First we need the following lemma:
\begin{lemma} \label{lemma:sobolev}
If $B \in H^1(\Omega)$ and $A \in H^2(\Omega)$ then $A \times B \in H^1(\Omega)$ for a smooth enough domain $\Omega$ of $\mathbb{R}^3$ (or $\mathbb{R}^2$).
\end{lemma}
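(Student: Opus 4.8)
The plan is to write the cross product componentwise as $(A \times B)_i = \sum_{j,k} \varepsilon_{ijk} A_j B_k$ and reduce the claim to showing that each scalar product $A_j B_k$ belongs to $H^1(\Omega)$, since a finite linear combination of $H^1$ functions is again in $H^1$ and the components $(A\times B)_i$ are exactly such combinations. The only tools needed are the Sobolev embeddings valid on a Lipschitz (smooth enough) bounded domain of $\mathbb{R}^3$: namely $H^2(\Omega) \hookrightarrow L^\infty(\Omega)$ (since $2 > 3/2$) and $H^1(\Omega) \hookrightarrow L^6(\Omega)$. In particular, as $A \in H^2(\Omega)$ gives $\partial_l A_j \in H^1(\Omega)$ componentwise, we also have $\nabla A \in L^6(\Omega)$.

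The first step is to establish the product rule $\partial_l(A_j B_k) = (\partial_l A_j) B_k + A_j (\partial_l B_k)$ in the weak sense, which is the only point requiring care since $B_k$ is not bounded. To handle it I would take smooth approximations $A_j^\varepsilon \to A_j$ in $H^2(\Omega)$ and $B_k^\varepsilon \to B_k$ in $H^1(\Omega)$, for which the identity holds classically, and pass to the limit: the embeddings give $A_j^\varepsilon \to A_j$ in $L^\infty(\Omega)$ and $\partial_l A_j^\varepsilon \to \partial_l A_j$ in $L^6(\Omega)$, so that $A_j^\varepsilon B_k^\varepsilon \to A_j B_k$ in $L^2(\Omega)$, $(\partial_l A_j^\varepsilon) B_k^\varepsilon \to (\partial_l A_j) B_k$ in $L^3(\Omega) \subset L^2(\Omega)$, and $A_j^\varepsilon (\partial_l B_k^\varepsilon) \to A_j (\partial_l B_k)$ in $L^2(\Omega)$. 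Closedness of the weak gradient then yields both the membership $A_j B_k \in H^1(\Omega)$ and the product formula simultaneously.

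It remains only to verify that each of the three limits is genuinely an $L^2$ function, which is a bookkeeping of Hölder exponents on the bounded domain: $A_j B_k \in L^2$ because $A_j \in L^\infty$ and $B_k \in L^2$; $(\partial_l A_j) B_k \in L^2$ because $\partial_l A_j \in L^6$ and $B_k \in L^6 \subset L^3$, whence the product lies in $L^3 \subset L^2$; and $A_j (\partial_l B_k) \in L^2$ because $A_j \in L^\infty$ and $\partial_l B_k \in L^2$. This closes the argument, giving $A \times B \in H^1(\Omega)$. The two-dimensional case is identical, the cross product again reducing to scalar products $A_j B_k$ and now using the embeddings $H^2 \hookrightarrow L^\infty$ and $H^1 \hookrightarrow L^p$ for every $p < \infty$, which only make the Hölder bounds easier. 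I expect the single genuine obstacle to be the justification of the product rule for factors of limited regularity, which is precisely why the approximation argument is carried out explicitly rather than invoked; everything else is routine.
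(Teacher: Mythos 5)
Your proof is correct and follows essentially the same route as the paper: expand the cross product componentwise, apply the Leibniz rule, and control each term via Sobolev embeddings ($H^2(\Omega)\hookrightarrow L^\infty$ and $H^1(\Omega)\hookrightarrow L^6$, where the paper uses the equivalent pair $H^2(\Omega)\subset C^0(\widebar{\Omega})$ and $H^1(\Omega)\subset L^4(\Omega)$, the exponent bookkeeping working out either way on a bounded domain). The only difference is that you explicitly justify the weak product rule by a density argument, a standard step the paper leaves implicit, so this is a slightly more careful write-up of the same proof.
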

\begin{proof}
We have $H^1(\Omega) \subset L^4(\Omega)$ and $H^2(\Omega) \subset C^{0}(\widebar{\Omega})$
by Sobolev Embedding theorems
thus $\forall i,j,k \in \lbrace x,y,z\rbrace$, $\partial_i A_j \in L^4(\Omega)$, $\partial_i B_j \in L^2(\Omega)$.
Terms of the form $A_i\, \partial_j B_k$ 
are the product of a bounded function with a function in $L^2(\Omega)$ 
and those of the form $B_i\, \partial_j A_k$ are the product of two functions in $L^4(\Omega)$.
\end{proof}
Going back to problem \eqref{eq:NS_Eulercont}, we take
$V^1 = H(\mathrm{curl},\Omega)$, $V^2 = H(\mathrm{div},\Omega)$, $V^3 = L^2(\Omega)$ and $\mathfrak{H}^3 = {0}$.
Regarding assumption \eqref{eq:regularity35},
$K$ is a bounded isomorphism from $L^2 \times L^2$ to $(H(\mathrm{div}) \cap H_0(\mathrm{curl})) \times H^1_0 \subset H^1 \times H^1$.
If we assume $u \in H^2$
and $v \in H^1$ then using the scalar triple product we have $\forall \sigma \in H(\mathrm{curl})$: 
\[2 \langle l_3 \sigma, v \rangle = \int (\sigma \times u^{n-1}) \cdot v = \int (u^{n-1} \times v) \cdot \sigma = 2 \langle \sigma, l_3^* v \rangle. 
\]
Thus $l_3^* v = \frac{1}{2} u^{n-1} \times v$ on $H^1$, $l_3^*$ maps $H^1$ into itself by Lemma \ref{lemma:sobolev}, and 
\[l_3^* (K)_2 (W) \subset H^1 \subset H(\mathrm{curl}) = V^1.
\]
Thus we have 
$\Vert (\nabla \times) l_3^* (K)_2 \Vert_{W \rightarrow L^2} \lesssim \Vert (K)_2 \Vert_{W \rightarrow H^1}$
and by the boundedness of $\Vert K \Vert_{L^2 \times L^2 \rightarrow H^1 \times H^1} $ we get the boundedness of $\Vert d l_3^* (K)_2 \Vert_{W \rightarrow L^2}$.
Finally we have $\nu d d^* (K)_2 = I - d l_3^* (K)_2 - l_5^* (K)_2 - d^* (K)_3$ as distributions.
From the $L^2 \rightarrow L^2$ boundedness of the right-hand side we get both $\nu d^* (K)_2 (W) \subset H(\mathrm{curl}) = V^1$ and the boundedness of $\Vert d d^* (K)_2 \Vert_{W \rightarrow L^2}$.
The same argument applied to $\widebar{L}^{-1}$ shows that $d^* (\widebar{L}^{-1})_2(W) \subset V^1$.
Hence \eqref{eq:regularity35} is fulfilled.

\begin{remark}
Assuming $u^{n-1} \in H^2(\Omega)$ is very mild as any solution $u$ of \eqref{eq:defmixedprimal} must have $\nabla \times u \in H(\mathrm{curl},\Omega)$, $\nabla \cdot u = 0$ thus $\Delta u \in L^2(\Omega)$.
Hence by elliptic regularity for $\Omega$ smooth enough and if $u$ satisfies appropriate boundary conditions then $u \in H^2$.
\end{remark}

\subsection{Pointwise vanishing divergence and pressure-robustness} \label{Pressurerobustness}
The pointwise vanishing divergence is a simple fact that follows from the use of a discrete subcomplex.
From \eqref{eq:NS_Eulercont} we have $\forall q_h \in V^3_h$, $\langle \nabla \cdot u_h + \phi_h, q_h \rangle = 0$ 
with $\nabla \cdot u_h \perp \phi_h$ and $\nabla \cdot u_h \in V^3_h$ by construction.
Therefore, taking $q_h = \nabla \cdot u_h$ we have 
\[\langle \nabla \cdot u_h + \phi_h, \nabla \cdot u_h \rangle = \Vert \nabla \cdot u_h \Vert^2 = 0.
\]
This holds even for Condition \eqref{eq:bc2} or \eqref{eq:bc3} which does not give a complex structure.

A scheme is called pressure-robust (\cite{zbMATH07185373,Linke2020,Lederer2019}) if 
only the pressure (and not the velocity) changes when the external forces acting on the system are modified by a gradient.
This property is only valid if 
there are no harmonic $2$-forms.

Every vector field $f \in L^2(\Omega)$ can be written as $f = \nabla \times g + \nabla p$
for some fields $g$ and $p$.
In a bounded domain we only have uniqueness with correct boundary conditions on $g$ and $p$.
As long as these boundary conditions match the ones given in the complex, 
we have by viewing $f$ as a 2-form, $ \nabla \times g = P_\mathfrak{B} f$ 
and $\nabla p = P_{\mathfrak{B}^*} f$.
Let $\bar{f} \in L^2(\Omega)$ be another source term and
write $(\omega_h,u_h,p_h,\phi_h)$ (resp. $(\bar{\omega}_h,\bar{u}_h,\bar{p}_h,\bar{\phi}_h)$)
the solution of \eqref{eq:defmixeddiscrete} (resp. \eqref{eq:defmixeddiscrete}) for the external forces $(f,0)$ (resp. $(\bar{f},0)$).
\begin{theorem}\label{th:pressurerobust}
If $\exists s \in V^3$ such that $\bar{f} = f + \nabla s$, then
$\omega_h = \bar{\omega}_h$ and $u_h = \bar{u}_h$.
\end{theorem}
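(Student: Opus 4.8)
The plan is to reduce the claim to the uniqueness half of the discrete well-posedness, Theorem \ref{th:wellposednesspertubeddiscrete}. First I would set $\delta\mathbf{u}_h := (\bar{\omega}_h - \omega_h,\, \bar{u}_h - u_h,\, \bar{p}_h - p_h,\, \bar{\phi}_h - \phi_h)$. By linearity of $B$ in \eqref{eq:defBlambda} and of the right-hand side of \eqref{eq:defmixeddiscrete}, and since the two source data differ only by $(\nabla s, 0)$, this difference satisfies $B(\delta\mathbf{u}_h, \mathbf{v}_h) = \langle \nabla s, v_{2h}\rangle$ for every $\mathbf{v}_h = (v_{1h}, v_{2h}, v_{3h}, v_{ph}) \in V_h$. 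All of the difficulty is concentrated in this single extra gradient term on the velocity equation; the equations paired with $v_{1h}$, $v_{3h}$ and $v_{ph}$ are already homogeneous for $\delta\mathbf{u}_h$.

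The key idea is that a discrete gradient can be absorbed into the pressure unknown. I would set $\sigma := P_{\mathfrak{B}^3_h} s$, the $L^2$-orthogonal projection of $s$ onto the discrete exact $3$-forms $\mathfrak{B}^3_h = d(V^2_h)$, and consider the corrected tuple $\mathbf{w} := (\bar{\omega}_h - \omega_h,\, \bar{u}_h - u_h,\, \bar{p}_h - p_h - \sigma,\, \bar{\phi}_h - \phi_h) \in V_h$. The goal is to show $B(\mathbf{w}, \mathbf{v}_h) = 0$ for all $\mathbf{v}_h \in V_h$, so that Theorem \ref{th:wellposednesspertubeddiscrete} forces $\mathbf{w} = 0$; reading off the first two components then yields $\omega_h = \bar{\omega}_h$ and $u_h = \bar{u}_h$, and as a byproduct $\bar{p}_h - p_h = \sigma$ together with $\phi_h = \bar{\phi}_h$.

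To establish $B(\mathbf{w},\mathbf{v}_h)=0$ I would check the four groups of test functions in turn, using \eqref{eq:defBlambda}. The equations tested against $v_{1h}$ and $v_{3h}$ only involve the components I did not modify (the first, second and fourth), hence they coincide with the homogeneous equations already satisfied by $\delta\mathbf{u}_h$. The velocity equation, tested against $v_{2h} \in V^2_h$, produces the term $\langle \nabla s, v_{2h}\rangle$ together with the correction $\langle \sigma, d v_{2h}\rangle$ coming from $-\langle \bar{p}_h - p_h - \sigma, d v_{2h}\rangle$. Writing $\nabla s = -d^* s$ and integrating by parts gives $\langle \nabla s, v_{2h}\rangle = -\langle s, d v_{2h}\rangle$, while $d v_{2h} \in \mathfrak{B}^3_h$ and self-adjointness of $P_{\mathfrak{B}^3_h}$ give $\langle \sigma, d v_{2h}\rangle = \langle s, d v_{2h}\rangle$; the two cancel. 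Finally, the multiplier equation tested against $v_{ph} \in \mathfrak{H}^3_h$ reduces, after using the homogeneous multiplier equation for $\delta\mathbf{u}_h$, to $-\langle \sigma, v_{ph}\rangle$, which vanishes precisely because $\sigma \in \mathfrak{B}^3_h$ is orthogonal to $\mathfrak{H}^3_h$ in the discrete Hodge decomposition $V^3_h = \mathfrak{B}^3_h \operp \mathfrak{H}^3_h$ — this is exactly why $\sigma$ must be the projection onto $\mathfrak{B}^3_h$ rather than onto all of $V^3_h$.

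The main obstacle is the velocity-equation cancellation, and more precisely making the integration by parts $\langle \nabla s, v_{2h}\rangle = -\langle s, d v_{2h}\rangle$ rigorous: since $\bar{f} - f = \nabla s \in L^2$ one has $s \in V^*_3$, so that $\nabla s = -d^* s$ with the boundary contribution compatible with the complex, and the identity holds for every $v_{2h} \in V^2_h \subset V^2$. I would also record that the argument tacitly relies on the standing hypothesis that there are no harmonic $2$-forms, so that \eqref{eq:defmixeddiscrete} is well-posed and the conclusion $\mathbf{w} = 0$ may be drawn for $h \leq h_0$; this matches the remark that pressure-robustness requires $\mathfrak{H}^2 = \lbrace 0 \rbrace$. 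The pointwise identity $d(\bar{u}_h - u_h) = 0$, already available from the vanishing-divergence discussion, is consistent with $u_h$ and $\bar{u}_h$ lying in the same exactly divergence-free space.
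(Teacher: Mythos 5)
Your proposal is correct and follows essentially the paper's own argument: the paper likewise absorbs the gradient into the discrete pressure, producing $\xi_h \perp \mathfrak{H}_h$ with $d_h^* \xi_h = -P_h(f - \bar{f})$ (your $\sigma = P_{\mathfrak{B}_h} s$ is exactly $-\xi_h$), observing that $(0,0,\xi_h,0)$ solves the discrete system with source $(f-\bar{f},0)$, and concluding by linearity and uniqueness from Theorem \ref{th:wellposednesspertubeddiscrete}. The only difference is cosmetic: you verify the cancellation directly by integration by parts against $d v_{2h}$, whereas the paper obtains $\xi_h$ from the discrete Hodge decomposition after checking $P_{\mathfrak{B}_h}(f-\bar{f})=0$, which is the step where it explicitly invokes the absence of harmonic $2$-forms.
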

\begin{proof}
If there is $s \in V^3$ such that $f + \nabla s = \bar{f}$ then $P_\mathfrak{B} (f - \bar{f}) = 0$,
hence $\forall g \in V^1, \langle f - \bar{f}, \nabla \times g \rangle = 0$ and in particular
\[\forall g_h \in V^1_h \subset V^1,  \langle f - \bar{f}, \nabla \times g_h \rangle = 0\,.
\]
Therefore $P_{\mathfrak{B}_h} (f - \bar{f}) = 0$, and since we assumed that there were no harmonic $2$-forms,  $P_h (f - \bar{f}) = P_{\mathfrak{B}_h^*} (f - \bar{f})$.
Thus we can find $\xi_h \in V^3_h$ such that $\xi_h \perp \mathfrak{H}_h$ and $d^*_h \xi_h = - P_h (f - \bar{f})$.
Moreover $(0,0,\xi_h,0)$ verifies:
$\forall (\tau_h,v_h,q_h,\chi_h) \in V^1_h \times V^2_h \times V^3_h \times \mathfrak{H}_h$,
\begin{alignat*}{3}
\langle 0,\tau_h \rangle - \langle 0,\nabla \times \tau_h \rangle =&\ 0,& \quad
\langle \nu d 0 + l_3 0 + l_5 0, v_h \rangle - \langle \xi_h, \nabla \cdot v_h \rangle)
=&\ \langle f - \bar{f},v_h \rangle, \\
\langle \nabla \cdot 0,q_h \rangle + \langle 0,q_h \rangle  =&\ 0,& \quad
\langle \xi_h,\chi_h \rangle =&\ 0.
\end{alignat*}
By linearity and uniqueness of the solution we have 
$(\omega_h,u_h,p_h,\phi_h) = (\bar{\omega}_h,\bar{u}_h,\bar{p}_h,\bar{\phi}_h) + (0,0,\xi_h,0)$
and  $\omega_h = \bar{\omega}_h, u_h = \bar{u}_h$.
\end{proof}
\begin{remark}
Consider a time iterating scheme. 
At any time step $n$, the linear maps $l_3^n$ and $l_5^n$ involved in the determination of $(\omega^n_h,u^n_h,p^n_h,\phi^n_h)$ 
are functions of $u^{n-1}_h$ and $\omega^{n-1}_h$ (more precisely $l_3^n = l_3(u^{n-1}_h)$ and $l_5^n = l_5(\omega^{n-1}_h)$).
However, both $\omega_h$ and $u_h$ remain unchanged when the external force is modified by a gradient.
As long as $u^0_h = \bar{u}^0_h$ and $\omega^0_h = \bar{\omega}^0_h$,
an immediate recursion shows that $\forall n$, $u^n_h = \bar{u}^n_h$ and $\omega^n_h = \bar{\omega}^n_h$.
\end{remark}

The pressure-robustness allows us to remove the pressure from the error estimate.
\begin{theorem}[Pressure-robust estimate] \label{th:pressurerobustestimate}
Let $(\omega,u,p,\phi) := (u_1,u_2,u_3,u_p)$ be the solution of the continuous problem \eqref{eq:defmixedcontinuous}
and $(\omega_h,u_h,p_h,\phi_h) := (u_{1h},u_{2h},u_{3h},u_{ph})$ be the solution of the discrete problem \eqref{eq:defmixeddiscrete}.
Then it holds:
\begin{equation*}
\begin{aligned}
\Vert (u_1 - u_{1h},u_2 - u_{2h}) \Vert_{V^1 \times V^2} \lesssim&\, 
\inf_{v_1 \in V^1_h} \Vert u_1 - v_1 \Vert_{V^1} + \inf_{v_2 \in V^2_h} \Vert u_2 - v_2 \Vert_{V^2}, \\
\Vert \omega - \omega_h \Vert +  \Vert \nabla \times (\omega - \omega_h) \Vert +\Vert u - u_h \Vert \lesssim&\,
\inf_{\chi \in V^1_h} \Vert \omega - \chi \Vert_{V^1} + \inf_{v \in V^2_h} \Vert u - v \Vert_{V^2}.
\end{aligned}
\end{equation*}
\end{theorem}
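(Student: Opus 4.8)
The plan is to use pressure-robustness to trade the given right-hand side for one whose continuous solution has vanishing pressure, and then to read off the energy estimate of Corollary~\ref{corollary:pertubedestimate} for this modified problem, in which every pressure contribution disappears. Throughout I work under the standing assumption of this section that there are no harmonic $2$-forms, and for $h \le h_0$ so that Corollary~\ref{corollary:pertubedestimate} applies.

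First I would record two facts about the continuous solution $(\omega,u,p,\phi)=(u_1,u_2,u_3,u_p)$. Since $f_3 \perp \mathfrak{H}^3$, testing the third equation of \eqref{eq:defmixedcontinuous} against $v_3 \in \mathfrak{H}^3$ and using $\langle d u_2,v_3\rangle = 0$ forces $u_p=\phi=0$; the identical argument applies to the modified problem introduced below. Moreover $\nabla p = -d^* u_3 \in \mathfrak{B}^*_2$, so $P_\mathfrak{B}(\nabla p)=0$.

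Next I set $\tilde f_2 := f_2 - \nabla p$ and consider \eqref{eq:defmixedcontinuous} and \eqref{eq:defmixeddiscrete} with right-hand side $(\tilde f_2, f_3)$. Subtracting the gradient of the pressure is exactly what cancels the coupling term $-\langle u_3, d v_2\rangle$, so $(\omega,u,0,0)$ satisfies the modified continuous problem; by uniqueness it is its solution, and in particular its pressure and harmonic components both vanish. On the discrete side, linearity shows that the difference of the discrete solutions for $(f_2,f_3)$ and for $(\tilde f_2, f_3)$ solves \eqref{eq:defmixeddiscrete} with data $(\nabla p, 0)$; applying Theorem~\ref{th:pressurerobust} to this pure-gradient datum (with $f=0$, $\bar f = \nabla p$, which is legitimate because $P_\mathfrak{B}(\nabla p)=0$ and there are no harmonic $2$-forms) shows that its vorticity and velocity components are zero. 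Hence the discrete solution of the modified problem shares the components $\omega_h$ and $u_h$ with the original one.

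Finally I would apply Corollary~\ref{corollary:pertubedestimate} to the modified problem. Its continuous solution has $\tilde u_3 = 0$ and $\tilde u_p = 0$, so the terms $E(\tilde u_3)$, $E(\tilde u_p)$ and $\mu_0 E(P_\mathfrak{B}\tilde u_3)$ all vanish and, using $\tilde u_1=\omega$ and $\tilde u_2=u$, the estimate reduces to
\[
\Vert(\omega-\omega_h,\,u-u_h,\,-\tilde p_h,\,-\tilde\phi_h)\Vert_V \lesssim \inf_{v_1\in V^1_h}\Vert\omega-v_1\Vert_{V^1} + \inf_{v_2\in V^2_h}\Vert u-v_2\Vert_{V^2}.
\]
Discarding the pressure and harmonic components on the left yields the first claimed inequality. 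The second inequality is then immediate, since its left-hand side equals $\Vert\omega-\omega_h\Vert_{V^1}+\Vert u-u_h\Vert$, which is bounded by the left-hand side of the first. The only genuinely delicate step is invoking pressure-robustness when $f_3 \ne 0$, which is why I first peel off the pure-gradient datum $(\nabla p,0)$ by linearity before appealing to Theorem~\ref{th:pressurerobust}.
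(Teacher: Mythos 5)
Your proof is correct and follows essentially the same route as the paper: replace the source by $\tilde f_2 := f_2 - \nabla p$, identify the vorticity and velocity components of the modified and original problems via pressure-robustness (Theorem~\ref{th:pressurerobust}), and apply Corollary~\ref{corollary:pertubedestimate} to the modified problem, whose pressure and harmonic terms $E(\tilde u_3)$, $E(\tilde u_p)$, $\mu_0 E(P_\mathfrak{B}\tilde u_3)$ vanish. Your extra care in peeling off the pure-gradient datum $(\nabla p,0)$ by linearity, so that Theorem~\ref{th:pressurerobust} (stated for $f_3=0$) applies even when $f_3 \neq 0$, is a small but sound tightening of a step the paper leaves implicit.
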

\begin{proof}
Consider an alternative problem with the same $l_3$ and $l_5$ as before but with the source term replaced by $\tilde{f_2} := f_2 - \nabla p$.
Let $(\tilde{\omega},\tilde{u},\tilde{p},\tilde{\phi})$ and $(\tilde{\omega}_h,\tilde{u}_h,\tilde{p}_h,\tilde{\phi}_h)$ be respectively the continuous and discrete solution to this alternative problem.
By construction, we have $\tilde{p} = 0$, $\tilde{\phi} = 0$.
Hence, the estimate of Corollary \ref{corollary:pertubedestimate} gives:
\begin{equation*}
\Vert \tilde{\omega} - \tilde{\omega}_h \Vert +  \Vert \nabla \times (\tilde{\omega} - \tilde{\omega_h}) \Vert +\Vert \tilde{u} - \tilde{u_h} \Vert \lesssim
\inf_{\chi \in V^1_h} \Vert \tilde{\omega} - \chi \Vert_{V^1} + \inf_{v \in V^2_h} \Vert \tilde{u} - v \Vert_{V^2}.
\end{equation*}
We conclude since by the pressure-robustness we must have:
\begin{equation*}
\tilde{\omega} = \omega,\ \tilde{u} = u,\ 
\tilde{\omega}_h = \omega_h,\ \tilde{u}_h = u_h\ .
\end{equation*}
\end{proof}

\section{Numerical simulations} \label{Numericalsimulations}
We validate our scheme with three numerical simulations.
The first simulation aims to verify the pressure-robustness property.
The second is based on an exact and fully 3D solution of the Navier-Stokes equation constructed by Ethier \cite{Ethier1994}.
We use it to check the convergence rate in space, first on a steady problem then on an unsteady problem.
The last simulation focuses on Taylor-Couette flow: 
we seek the critical speed 
at which Taylor vortices appear.
It is based upon \cite{aitmoussa,Gebhardt1993,Hoffmann2005}.
In any case, we took a unit kinematic viscosity and polynomials of degree $2$.
The divergence is always checked to be pointwise zero up to machine precision.
Our codes are written with the FEniCS computing platform, version 2019.1.0 
(See \url{fenicsproject.org} and \cite{FEniCS})
and are available at \url{https://github.com/mlhanot/Navier-Stokes-feec}. 

\subsection{Pressure robustness}
We wish to verify the pressure robustness stated in Theorem \ref{th:pressurerobust},
i\@.e\@.,
that if the external forces acting on two flows differ only by a gradient,
then only the pressure differs between the flows.
We consider the Stokes no-flow problem in a glass (see \cite{SMAI-JCM_2019__5__89_0}).
The setup is rather simple. The mesh is a cylinder along the $z$ axis of height \num{2.}, base radius \num{1.} and top radius \num{1.5}, and the force $f$ derives from a potential:
\[f := \frac{\nabla \Phi} {\int_{\Omega} \Phi},\quad \Phi := z^\gamma
\]
for $\gamma = 1,2,4,7$.
We start from a fluid at rest and enforce a no slip condition on the whole boundary.
In every case we found a velocity equal to zero at the order of the machine precision.
This is not trivial as the same test conducted with Taylor-Hood elements ($P_2/P_1$) gave a significant nonzero velocity.
Moreover the error would scale with $\nu^{-1}$ (as shown in \cite{SMAI-JCM_2019__5__89_0}).

\subsection{Convergence rate to an exact solution}
We have conducted a convergence analysis with an exact solution.
The expression for the solution is given by Ethier \cite{Ethier1994} and depends on two real parameters $a$ and $d$. It is given by:
\[ u := \begin{bmatrix} 
-a (\exp(a x)\sin(a y + d z) + \exp(a z)\cos(a x + d y))\exp(-d^2 t)\\ 
-a (\exp(a y)\sin(a z + d x) + \exp(a x)\cos(a y + d z))\exp(-d^2 t)\\
-a (\exp(a z)\sin(a x + d y) + \exp(a y)\cos(a z + d x))\exp(-d^2 t)
\end{bmatrix}. 
\]
We have performed two sets of experiments: the first with $a = 2$ and $d = 0$ and the 
second with $a = 2$ and $d = 1$. 
The domain consists of a cylinder of height $2$ and radius $1$.
In the latter case the computation was done for $t$ between $0$ and $1$ 
with a time step of \num{1.e-3}.
We set the velocity to be equal to the velocity of the exact solution on the boundary (without enforcing any boundary condition on the vorticity).
For the stationary case ($d = 0$) we start from a fluid at rest, otherwise we start from the exact solution at $t = 0$.
We found a rate of convergence in space for the velocity of order $2.0$ in both cases, which is in agreement with the theory. 
Figure \ref{fig:Convrate} shows the convergence of the velocity in the relative $H(\text{div})$-norm $\frac{\Vert u_h - u \Vert_{H(\text{div})}}{\Vert u \Vert_{H(\text{div})}}$
with a log-log scale.
\begin{remark}
We took a time step small enough to neglect the error introduced by the time discretization.
\end{remark}
\begin{figure}
\centering
\begin{tikzpicture}[scale=0.65]
\begin{loglogaxis}[grid, xlabel=h, ylabel=Error, legend style={at={(0.0,1.0)},anchor=north west},
xtick={0.1,0.2,0.3,0.4,0.5,0.6,0.7,0.8,0.9},
xticklabels={0.1,0.2,0.3,0.4,0.5,0.6,0.7,0.8,0.9},
ytick={0.001,0.002,0.003,0.004,0.005,0.006,0.007,0.008,0.009,0.01},
yticklabels={\pgfmathprintnumber[sci]{0.001},\pgfmathprintnumber[sci]{0.002},,,,,,,,\pgfmathprintnumber[sci]{0.01}},
yticklabel style={/pgf/number format/.cd,sci}
]
\addplot plot coordinates {
(0.3701523923234168, 0.010096232860397174)
(0.2818874705382209, 0.0059842337475502399)
(0.22402549828914003, 0.0037725667958979302)
(0.1873969335628237, 0.0026026329284828472)
(0.16111550980350983, 0.0019131416970179491)
(0.14123099900924302, 0.0014777998363669485)
};
\addplot plot coordinates {
(0.37015239232341673, 0.0116568662938769)
(0.2790259042326483, 0.0065680806462421158)
(0.22581158624630765, 0.0042434222824466843)
(0.1875652678647778, 0.0029236973946968718)
(0.16089234492850563, 0.0021669763798863981)
(0.1407680298887053, 0.0016708889368816321)
};
\legend{$a = 2$, $d=0$\\$a=2$, $d=1$\\}
\logLogSlopeTriangle{0.90}{0.4}{0.1}{2}{black};
\end{loglogaxis}
\end{tikzpicture}
\vspace*{-1ex}
\caption{Convergence rate of the velocity with relative $H(\text{div})$ error on a log log scale.}
\label{fig:Convrate}
\end{figure}
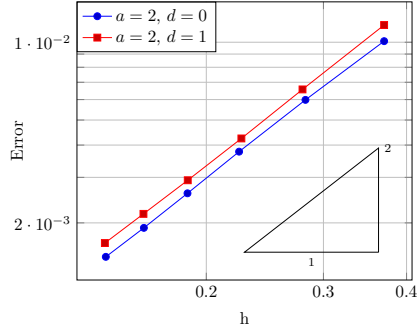
\subsection{Taylor-Couette flow} \label{TaylorCouetteflow}

\begin{figure}
\begin{minipage}[b]{0.65\textwidth}
\centering
\includegraphics[width=1\textwidth]{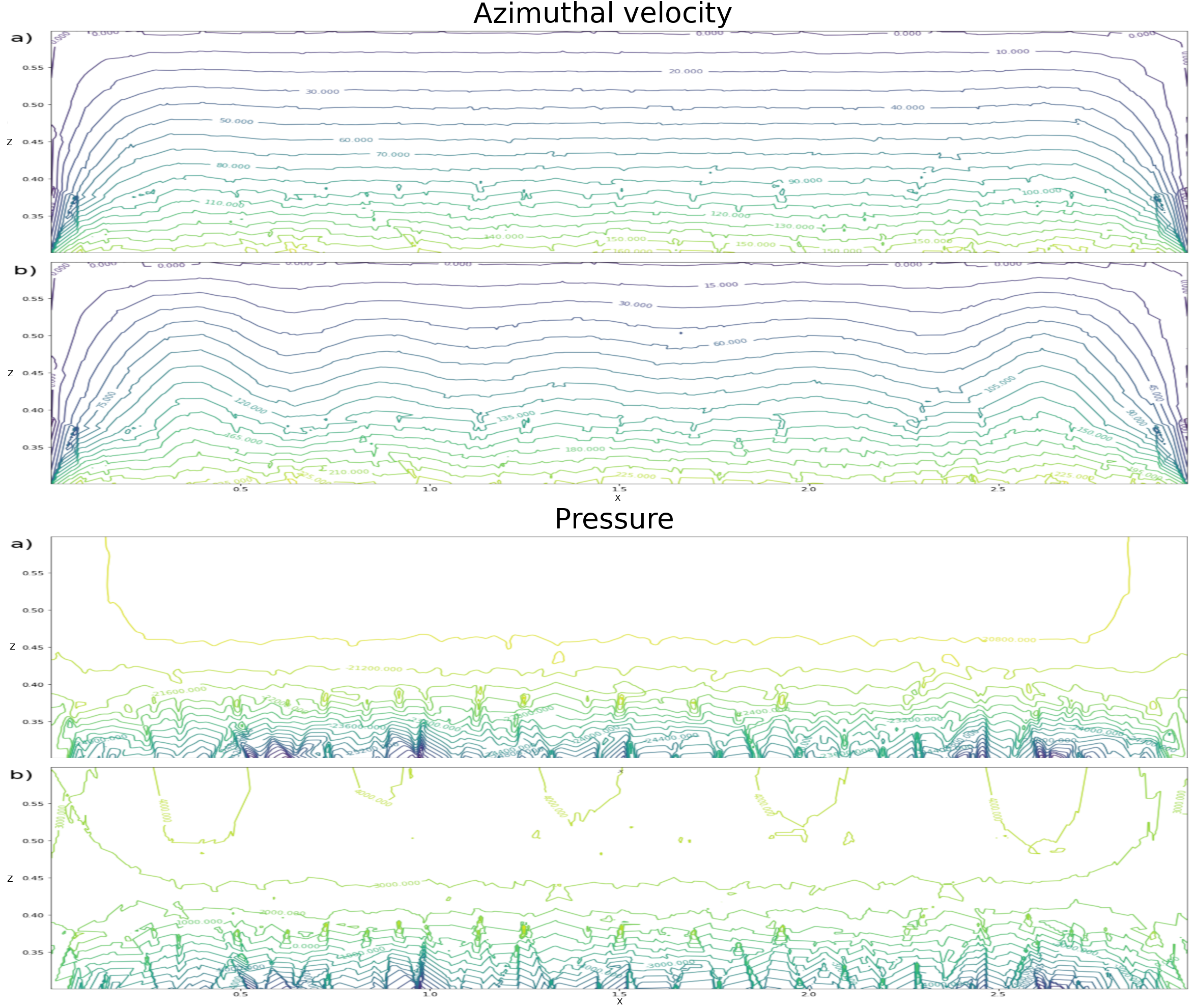}
\caption{Isolines for azimuthal velocity and pressure at $\eta = 0.5$, $Re_o = 0$, $Re_i = 50$ (a) and $Re_i = 72$ (b). }
\label{fig:5072}
\end{minipage}
\begin{minipage}[b]{0.3\textwidth}
\centering
\includegraphics[width=2\textwidth, angle=90]{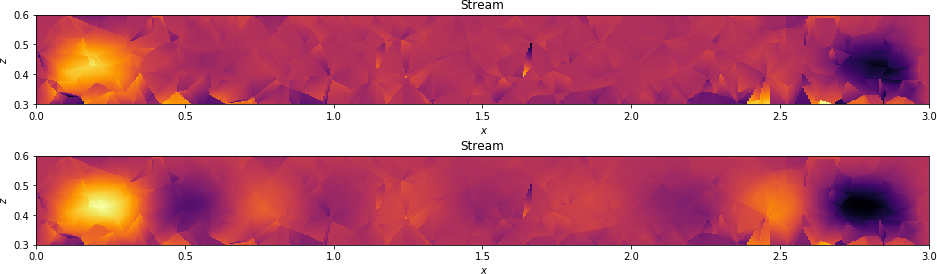}
\caption{Corresponding stream function.
}
\label{fig:Stream}
\end{minipage}
\end{figure}


\begin{figure}
\begin{minipage}[b]{0.48\textwidth}
\centering
\includegraphics[width=1.0\linewidth]{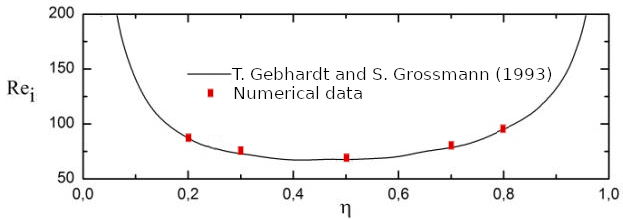}
\vspace*{-1ex}
\caption{Comparison of the critical value of $Re_i$ at $Re_o = 0$ for various $\eta$.}
\label{fig:3}
\end{minipage}
\begin{minipage}[b]{0.48\textwidth}
\centering
\includegraphics[width=1.0\linewidth]{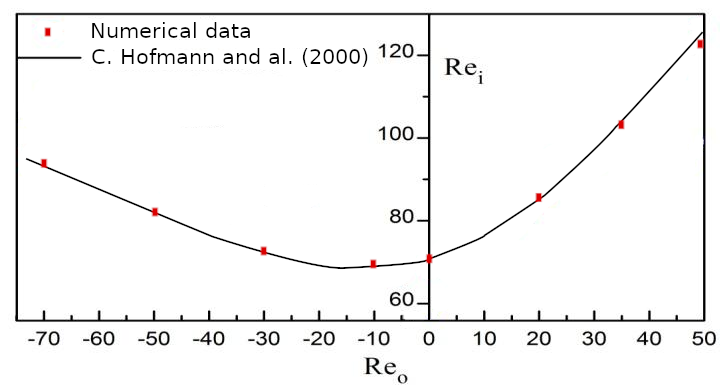}
\vspace*{-1ex}
\caption{Comparison of the critical value of $Re_i$ at $\eta = 0.5$ for various $Re_o$.}
\label{fig:5}
\end{minipage}
\end{figure}
This test focuses on Taylor-Couette flow.
We follow the work of Gebhardt et al.\ \cite{aitmoussa,Gebhardt1993,Hoffmann2005}.
The geometry consists in two concentric cylinders of constant radius $R_i$ for the inner and $R_o$ for the outer,
rotating at angular velocities $\Omega_i$ and $\Omega_o$ respectively, and both of height $a$.
The system is closed by two fixed lids at the bottom and top ends. We characterize the system by two geometric parameters: 
$\eta := R_i/R_o$ and $\Lambda := a/d$ with the gap $d := R_o - R_i$.
We also need to define two quantities: the inner Reynolds number $Re_i := \Omega_i R_i d /\nu$ and the outer Reynolds number $Re_o := \Omega_o R_o d/ \nu$ 
where $\nu$ is the kinematic viscosity.
For an infinite height $a$, it is a well known fact that,
at low speed the flow is steady and fully azimuthal,
and that vortices start to form at a critical speed.
Since $a$ is finite we expect to see vortices near the lids for speeds way under the critical value (they are however fundamentally different from the Taylor vortices, see \cite{Hoffmann2005}).

We compare the results obtained from our code with the reference \cite{Gebhardt1993,Hoffmann2005}. 
The simulations are done starting from a fluid at rest with a no slip boundary condition,
and studied at $t = 0.1$. 
Typically each simulation is done with a constant time step $\delta t = 0.001$. 
The time step was decreased for the simulations with the highest velocities.
We check the value of $Re_i$ at which the transition occurs for various values of $Re_o$ and $\eta$.
We display some values taken on the half plane $y = 0, x > 0$ 
for two values of $Re_i$ at $\eta = 0.5$, $Re_o = 0$.
Figure \ref{fig:5072} shows the azimuthal velocity and the pressure,
and Figure \ref{fig:Stream} shows the stream function (the azimuthal component of the vector potential).
The transition toward a Taylor-Couette flow occurs suddenly.
However, an automatic detection would be difficult to implement,
mainly due to interference from the fixed lids.
Hence, we obtained an interval rather than a point for the critical value of $Re_i$ 
(for each value of $Re_o$ and $\eta$).
Our results are represented by the red rectangles in Figures \ref{fig:3} and \ref{fig:5}.
Their height is given by the length of the interval (and their width is arbitrary).
The reference data are represented by the black curves.
We find very good agreement with the reference, 
even though we used a much coarser mesh and a smaller aspect ratio $\Lambda$ of $10$ instead of $20$, for computational cost reasons.

\appendix
\section{Improved error estimates} \label{Improvederrorestimates}
This section is dedicated to the proofs of the improved results of Theorem \ref{th:unpertubedestimate}
which are used in the proof of Lemma \ref{lemma:estimatesforzh}.
We will make use of the following lemma (see \cite[Lemma~3.12]{Arnold_2010}):
\begin{lemma} \label{lemma:lemma12}
Let $v_h \in \mathfrak{Z}_h^{k \perp}$ and $v = P_{\mathfrak{B}^*} v_h$. Then 
\begin{equation*}
\Vert v - v_h \Vert \leq \Vert (I - \pi_h^k) v \Vert \leq \eta_0' \Vert d v_h \Vert ,
\end{equation*}
where $\eta_0'$ is the scalar given in Lemma \ref{lemma:PIh}.
\end{lemma}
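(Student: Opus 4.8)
The plan is to establish the two inequalities separately. The first, $\Vert v - v_h \Vert \le \Vert (I-\pi_h^k) v\Vert$, is purely a matter of the $L^2$-orthogonality built into the Hodge decomposition together with the cochain property of $\pi_h$; the second, $\Vert (I-\pi_h^k)v\Vert \le \eta_0' \Vert dv_h\Vert$, is the coexact approximation estimate encoded by $\eta_0'$, once I observe that $dv = dv_h$. First I would record the structural facts. Decomposing $v_h = P_{\mathfrak{Z}} v_h + P_{\mathfrak{B}^*} v_h$ with respect to $W^k = \mathfrak{Z}^k \operp \mathfrak{B}^*_k$, the cycle part $P_{\mathfrak{Z}} v_h$ lies in $\mathfrak{Z}^k \subset V^k$ by definition of $\mathfrak{Z}^k$, so that $v = P_{\mathfrak{B}^*} v_h = v_h - P_{\mathfrak{Z}} v_h \in V^k$ and $\pi_h v$ is well defined. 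Moreover $v - v_h = -P_{\mathfrak{Z}} v_h \in \mathfrak{Z}^k$, hence $d v = d v_h$.

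For the first inequality I would show that $\pi_h v - v_h \in \mathfrak{Z}_h^k$. Since $\pi_h$ is a cochain projection and $V_h$ is a subcomplex, $d\pi_h v = \pi_h d v = \pi_h d v_h = d v_h$, the last equality because $d v_h \in V_h^{k+1}$; thus $d(\pi_h v - v_h) = 0$. By hypothesis $v_h \perp \mathfrak{Z}_h^k$, so $\langle v_h, \pi_h v - v_h\rangle = 0$. Because $P_{\mathfrak{Z}}$ is self-adjoint and fixes the continuous cycle $\pi_h v - v_h \in \mathfrak{Z}_h^k \subset \mathfrak{Z}^k$, I then obtain $\langle v - v_h, \pi_h v - v_h\rangle = -\langle P_{\mathfrak{Z}} v_h, \pi_h v - v_h\rangle = -\langle v_h, \pi_h v - v_h\rangle = 0$. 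Consequently the splitting $v - \pi_h v = (v - v_h) + (v_h - \pi_h v)$ is $L^2$-orthogonal, and Pythagoras gives $\Vert (I-\pi_h) v\Vert^2 = \Vert v - v_h\Vert^2 + \Vert v_h - \pi_h v\Vert^2 \ge \Vert v - v_h\Vert^2$, which is the first inequality.

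For the second inequality, $v = P_{\mathfrak{B}^*} v_h \in \mathfrak{B}^*_k \cap V^k$ is coexact, and the scalar $\eta_0'$ of Lemma \ref{lemma:PIh} is, by its construction in \cite{Arnold2016}, a bound for the $\pi_h$-approximation of such coexact forms in terms of their exterior derivative; since $d v = d v_h$ this yields $\Vert (I-\pi_h) v\Vert \le \eta_0' \Vert d v\Vert = \eta_0' \Vert d v_h\Vert$, with $\eta_0' \to 0$ as $h \to 0$ thanks to the compactness property. The main obstacle is the orthogonality bookkeeping of the first inequality: one must check that $v$ actually lands in the domain $V^k$ of $\pi_h$ (which is why the decomposition $v_h = P_{\mathfrak{Z}} v_h + P_{\mathfrak{B}^*} v_h$ is recorded first) and that $\pi_h v - v_h$ is a genuine discrete cycle, so that the two error contributions decouple; the second inequality is then essentially imported from the reference through the identity $d v = d v_h$.
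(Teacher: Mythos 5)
Your proof is correct and coincides with the paper's own treatment of this statement: the paper offers no proof but simply cites \cite[Lemma~3.12]{Arnold_2010}, and your argument --- showing $\pi_h^k v - v_h \in \mathfrak{Z}_h^k$ via the cochain property and $dv = dv_h$, deducing $\langle v - v_h, \pi_h^k v - v_h\rangle = 0$ from $v_h \perp \mathfrak{Z}_h^k$ (and $v \perp \mathfrak{Z}^k$), and concluding by Pythagoras --- is precisely the standard proof from that reference, with the harmless refinement that you get the first inequality from orthogonality of the splitting rather than Cauchy--Schwarz. The one step you import rather than derive, namely $\Vert (I-\pi_h^k)v\Vert \leq \eta_0' \Vert dv\Vert$ for coexact $v$ (which in the reference rests on writing $v = d^*K\,dv$ so that $\eta_0'$ bounds $\Vert (I-\pi_h^k)d^*K\Vert$), is exactly the step the paper itself outsources to \cite{Arnold2016}, so your proposal is at the same level of completeness as the source.
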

\begin{remark} \label{rem:th37HodgeTheory}
As a consequence of the proof of \cite[Theorem~3.7]{Arnold_2010} we also see that 
we can take $\pi_h$ such that
for $u \in V^k$, 
$\Vert (I - P_{\mathfrak{B}_h}) d u \Vert = \Vert (I - \pi_h) d u \Vert \lesssim E(d u)$.
\end{remark}
We can now proceed to the various proofs.
\begin{theorem} \label{th:improvedestimates}
Let $(u_1,u_2,u_3,\phi_2,\phi_3) \in V^1 \times V^2 \times V^3 \times \mathfrak{H}^2 \times \mathfrak{H}^3$ be the solution of the continuous problem \eqref{eq:pbB0} 
and let $(u_{1h},u_{2h},u_{3h},\phi_{2h},\phi_{3h}) \in V^1_h \times V^2_h \times V^3_h \times \mathfrak{H}^2_h \times \mathfrak{H}^3_h$ be the solution of the discrete problem \eqref{eq:discretelinearsteady}.
Then the following estimates hold:
\begin{align}
\Vert u_1 - u_{1h} \Vert \lesssim\,& E(u_1) + \eta_0 E(d u_1), \label{eq:impest1}\\
\Vert du_1 - du_{1h} \Vert \lesssim\,& E(d u_1), \label{eq:impest2}\\
\Vert u_3 - u_{3h} \Vert \lesssim\,& E(u_3) + \eta_0' (E(d u_1) + E(\phi_2)), \label{eq:impest3}\\
\Vert P_{\mathfrak{B}_h} (u_2 - u_{2h}) \Vert \lesssim\,& \eta_0 E(u_1) + (\eta_0^2 + \delta_0) E(du_1), \label{eq:impest4}\\
\Vert P_{\mathfrak{B}^*_h} (u_2 - u_{2h}) \Vert \lesssim\,& E(u_2) + E(du_2), \label{eq:impest5}\\
\Vert du_2 - du_{2h} \Vert \lesssim\,& E(du_2). \label{eq:impest6}
\end{align}
\end{theorem}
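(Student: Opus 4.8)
The plan is to follow the roadmap of the Hodge-theoretic error analysis of \cite[Theorem~3.11]{Arnold_2010}, adapting it to the bilinear form $B_0$. The starting point is a quasi-orthogonality relation: subtracting the discrete problem \eqref{eq:discretelinearsteady} from the continuous problem \eqref{eq:pbB0} tested against discrete functions, and using that $P_\mathfrak{H} u_2 = P_\mathfrak{H} u_3 = 0$ (the last two lines of \eqref{eq:pbB0}), I would obtain that for every $\mathbf{v}_h \in V_h$ the value of $B_0$ on the error and $\mathbf{v}_h$ reduces to the harmonic consistency terms $\langle P_{\mathfrak{H}_h} u_2, \chi_{2h}\rangle + \langle P_{\mathfrak{H}_h} u_3, \chi_{3h}\rangle$, exactly as in the proof of Theorem \ref{th:unpertubedestimate}. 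I would then split each error component along the discrete Hodge decomposition $V_h^k = \mathfrak{B}_h^k \operp \mathfrak{H}_h^k \operp \mathfrak{B}^*_{k,h}$ and estimate the pieces separately, treating the six bounds in the order $\eqref{eq:impest2},\eqref{eq:impest6}\to\eqref{eq:impest5},\eqref{eq:impest1}\to\eqref{eq:impest4}\to\eqref{eq:impest3}$ dictated by their mutual dependencies.

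The two cheapest estimates are \eqref{eq:impest2} and \eqref{eq:impest6}, the errors on $d u_1$ and $d u_2$. Both follow from the commuting cochain projection $d\pi_h = P_{\mathfrak{B}_h} d$ (Remark \ref{rem:th37HodgeTheory}): since we work on a subcomplex, $d u_{1h}$ and $d u_{2h}$ are forced to be quasi-optimal approximations of $du_1$ and $du_2$ in $\mathfrak{B}_h$, which yields $\Vert d u_i - d u_{ih}\Vert \lesssim E(d u_i)$ after disposing of the harmonic contribution in the third line of $B_0$. Next, \eqref{eq:impest5}, the coexact part $P_{\mathfrak{B}^*_h}(u_2 - u_{2h})$, is obtained by applying Lemma \ref{lemma:lemma12} to $v_h = P_{\mathfrak{B}^*_h}(u_2-u_{2h}) \in \mathfrak{Z}_h^{2\perp}$, whose exterior derivative is controlled by \eqref{eq:impest6}, together with the direct best-approximation contribution $E(u_2)$. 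The bound \eqref{eq:impest1} for $u_1$ then follows by decomposing $u_1 - u_{1h}$: its $\mathfrak{B}_h$-part is handled by the discrete Poincaré inequality applied to \eqref{eq:impest2}, while the coexact and harmonic parts are treated by a first duality argument against the dual solution operator $K_0$, which is precisely what produces the factor $\eta_0 E(du_1)$ alongside the direct term $E(u_1)$.

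The technical core, and the step I expect to be the main obstacle, is \eqref{eq:impest4}, the exact part $P_{\mathfrak{B}_h}(u_2 - u_{2h})$ of the velocity error. Here I would run an Aubin--Nitsche type duality argument using $K_0$: write $\Vert P_{\mathfrak{B}_h}(u_2-u_{2h})\Vert$ as a supremum over data, solve the auxiliary dual problem, and insert the cochain projection of its solution as a Galerkin test function. The mismatch between $K_0$ and its projection is measured exactly by $\delta_0$ and $\eta_0$, and the quadratic factor $\eta_0^2$ arises from chaining the $u_1-u_{1h}$ bound \eqref{eq:impest1} with a further factor $\eta_0$ coming from the approximation of $d^*(K_0)_2$. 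The delicate point is to organise this estimate so that no $E(u_2)$ or $\delta_0$-free contribution survives, since the exact part is super-convergent and one must track precisely which approximation error attaches to which factor. Finally, \eqref{eq:impest3} for the pressure $u_3$ is recovered from the inf-sup (second-line) relation of $B_0$: the error $u_3 - u_{3h}$ is controlled through a $d^*$-identity whose data involve $d u_1 - d u_{1h}$ and the harmonic term $\phi_2$, and estimating the resulting coexact correction with Lemma \ref{lemma:lemma12} and Lemma \ref{lemma:PIh} produces the factor $\eta_0'(E(d u_1) + E(\phi_2))$ on top of the best-approximation term $E(u_3)$.
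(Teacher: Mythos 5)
Your overall strategy coincides with the paper's: the identities $du_{ih}=P_{\mathfrak{B}_h}du_i$ obtained from the commuting projection (Remark \ref{rem:th37HodgeTheory}) for \eqref{eq:impest2} and \eqref{eq:impest6}, the Aubin--Nitsche duality with $K_0$ for \eqref{eq:impest4} (where your account of the factors is exactly the paper's mechanism: $\delta_0$ from approximating $(K_0)_2$, and $\eta_0^2$ from chaining \eqref{eq:impest1} with the $\eta_0$-approximation of $d^*(K_0)_2$), and for \eqref{eq:impest3} the choice of a coexact discrete test function $v_{2h}\in\mathfrak{B}^*_{2h}$ with $dv_{2h}=P_{\mathfrak{B}_h}u_3-u_{3h}$, compared to $P_{\mathfrak{B}^*}v_{2h}$ via Lemma \ref{lemma:lemma12}, which is where the $\eta_0'$ comes from. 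Your ordering of the six bounds is also consistent with their mutual dependencies. However, two of your described mechanisms would fail as stated.

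First, in \eqref{eq:impest1} you propose to handle the ``$\mathfrak{B}_h$-part'' of $u_1-u_{1h}$ by the discrete Poincar\'e inequality applied to \eqref{eq:impest2}. The Poincar\'e inequality only controls elements of $\mathfrak{Z}_h^{\perp}$ by their exterior derivative; $\mathfrak{B}_h\subset\mathfrak{Z}_h$ lies in the kernel of $d$, so no such bound exists for that component. (The step is in fact unnecessary: the first equations of \eqref{eq:pbB0} and \eqref{eq:discretelinearsteady} give $u_1\perp\mathfrak{Z}^1\supset\mathfrak{Z}_h^1$ and $u_{1h}\perp\mathfrak{Z}_h^1$, so $P_{\mathfrak{Z}_h}(u_1-u_{1h})=0$.) More importantly, any route that controls part of the error by $\Vert d(u_1-u_{1h})\Vert\lesssim E(du_1)$ loses the factor $\eta_0$, which is the whole content of the improved estimate. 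The paper instead splits $u_1 = u_1^1+u_1^2$ with $u_1^1=d^*(K_0)_2((I-P_{\mathfrak{B}_h})du_1,0)$, $u_1^2=d^*(K_0)_2(P_{\mathfrak{B}_h}du_1,0)$, observes that $v_{1h}:=\pi_h u_1^2-u_{1h}$ satisfies $dv_{1h}=0$ and hence $\langle u_1-u_{1h},v_{1h}\rangle=0$ by Galerkin orthogonality, so the full error is bounded by $\Vert u_1-\pi_h u_1^2\Vert\lesssim E(u_1)+\Vert\pi_h u_1^1\Vert$, and the duality computation produces $\Vert u_1^1\Vert\lesssim\eta_0 E(du_1)$; your sketch does not supply this splitting. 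Second, in \eqref{eq:impest5} applying Lemma \ref{lemma:lemma12} to $v_h=P_{\mathfrak{B}^*_h}(u_2-u_{2h})$ only compares $v_h$ with $P_{\mathfrak{B}^*}v_h$; it does not bound $\Vert v_h\Vert$ itself. Moreover $dP_{\mathfrak{B}^*_h}u_2$ is not controlled by \eqref{eq:impest6}: the $L^2$-projection onto $\mathfrak{B}^*_h$ does not commute with $d$ for $u_2\notin V_h^2$. The correct tool is the discrete Poincar\'e inequality applied to $v_{2h}=P_{\mathfrak{B}^*_h}(\pi_h u_2-u_{2h})$, for which $dv_{2h}=d\pi_h u_2-du_{2h}=P_{\mathfrak{B}_h}du_2-du_{2h}$ is controlled by \eqref{eq:impest6}, followed by a triangle inequality with $\Vert P_{\mathfrak{B}^*_h}(u_2-\pi_h u_2)\Vert\lesssim E(u_2)$; inserting $\pi_h u_2$ before projecting is the step your sketch is missing.
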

For the sake of readability we divide the proof of Theorem \ref{th:improvedestimates} in the following lemmas.
\begin{lemma}
The estimate \eqref{eq:impest1} holds.
\end{lemma}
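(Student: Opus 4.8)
The plan is to estimate the error after subtracting the cochain projection and then to control the remaining discrete quantity by splitting it along the discrete Hodge decomposition. Since $\pi_h$ is a bounded cochain projection, for any $\tau \in V^1_h$ one has $(I-\pi_h)u_1 = (I-\pi_h)(u_1-\tau)$, so $\Vert(I-\pi_h)u_1\Vert \lesssim E(u_1)$; hence by the triangle inequality it suffices to bound $e := \pi_h u_1 - u_{1h} \in V^1_h$. The basic tool is the Galerkin orthogonality coming from the first equations of \eqref{eq:pbB0} and \eqref{eq:discretelinearsteady}: subtracting them gives $\langle u_1 - u_{1h}, v_{1h}\rangle = \langle u_2 - u_{2h}, d v_{1h}\rangle$ for every $v_{1h}\in V^1_h$.

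First I would write $e = e_{\mathfrak Z} + e_\perp$ with $e_{\mathfrak Z} = P_{\mathfrak Z_h} e$ and $e_\perp = P_{\mathfrak Z_h^\perp} e$, these being $L^2$-orthogonal. Testing the orthogonality relation with $v_{1h} = e_{\mathfrak Z}$, for which $d e_{\mathfrak Z}=0$, kills the right-hand side, so $\Vert e_{\mathfrak Z}\Vert^2 = \langle \pi_h u_1 - u_1, e_{\mathfrak Z}\rangle \le \Vert(I-\pi_h)u_1\Vert\,\Vert e_{\mathfrak Z}\Vert$, whence $\Vert e_{\mathfrak Z}\Vert \lesssim E(u_1)$. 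I would also record the auxiliary bound $\Vert d e\Vert \lesssim E(du_1)$: taking $v_{2h}=de \in \mathfrak B^2_h$ in the subtracted second equation annihilates the $d u_3$ and harmonic terms, because $d(de)=0$ and $\mathfrak H^2 \perp \mathfrak B^2 \supseteq \mathfrak B^2_h$, leaving $\langle du_1 - du_{1h}, de\rangle = 0$; since $d e = \pi_h du_1 - du_{1h}$ and $du_1-du_{1h}=(I-\pi_h)du_1+de$, this gives $\Vert d e\Vert \le \Vert(I-\pi_h)du_1\Vert \lesssim E(du_1)$ (this is also estimate \eqref{eq:impest2}).

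The delicate term is $e_\perp \in \mathfrak Z_h^\perp$. Applying Lemma \ref{lemma:lemma12} with $v_h = e_\perp$ and $v = P_{\mathfrak B^*} e_\perp$ gives $\Vert e_\perp - v\Vert \le \eta_0' \Vert d e_\perp\Vert = \eta_0' \Vert d e\Vert \lesssim \eta_0' E(du_1)$, so that $\Vert e_\perp\Vert \le \Vert P_{\mathfrak B^*} e\Vert + \eta_0' E(du_1)$ (using $P_{\mathfrak B^*}e_{\mathfrak Z}=0$). Since $u_1 = d^* u_2 \in \mathfrak B^*_1$ we have $P_{\mathfrak B^*}u_1 = u_1$, hence $\Vert P_{\mathfrak B^*} e\Vert \le \Vert(I-\pi_h)u_1\Vert + \Vert P_{\mathfrak B^*}(u_1 - u_{1h})\Vert \lesssim E(u_1) + \Vert P_{\mathfrak B^*}(u_1-u_{1h})\Vert$. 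Collecting the three pieces reduces the entire statement to the single bound $\Vert P_{\mathfrak B^*}(u_1-u_{1h})\Vert \lesssim \eta_0\, E(du_1)$.

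The hard part is precisely this last bound, and it is where the factor $\eta_0$, rather than an $O(1)$ constant, is earned. I would obtain it by a duality argument against the dual solution operator $K_0$, exactly along the lines of \cite[Theorem~3.11]{Arnold_2010}: for a normalized $\psi \in \mathfrak B^*_1$ one represents $\psi$ through the dual problem, uses the Galerkin orthogonality once more, and bounds the resulting consistency error by $\Vert(I-\pi_h)d^*(K_0)_2\Vert$ times the primal residual $\Vert d e\Vert$. Since $\Vert(I-\pi_h)d^*(K_0)_2\Vert \le \eta_0$ and $\Vert d e\Vert \lesssim E(du_1)$, this yields $\Vert P_{\mathfrak B^*}(u_1-u_{1h})\Vert \lesssim \eta_0\, E(du_1)$ and hence the claim. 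This duality step is the main obstacle, both because it requires the approximation of the codifferential of the dual velocity encoded in $\eta_0$ and because one must verify that the harmonic and cochain-projection corrections do not degrade the estimate.
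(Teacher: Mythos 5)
Your reduction is correct as far as it goes, and it is organized differently from the paper: the paper never splits the discrete error $e=\pi_h u_1-u_{1h}$ along the discrete Hodge decomposition. Instead it splits the \emph{continuous} solution, $u_1=u_1^1+u_1^2$ with $u_1^1=d^*(K_0)_2((I-P_{\mathfrak{B}_h})du_1,0)$ and $u_1^2=d^*(K_0)_2(P_{\mathfrak{B}_h}du_1,0)$, checks that $v_{1h}:=\pi_h u_1^2-u_{1h}$ satisfies $dv_{1h}=0$, so that Galerkin orthogonality gives $\langle u_1-u_{1h},v_{1h}\rangle=0$ and hence $\Vert u_1-u_{1h}\Vert\le\Vert u_1-\pi_h u_1^2\Vert\lesssim E(u_1)+\Vert u_1^1\Vert$, and then finishes with a single duality computation $\Vert u_1^1\Vert^2=\langle(I-P_{\mathfrak{B}_h})(K_0)_2(du_1^1,0),(I-P_{\mathfrak{B}_h})du_1\rangle\lesssim \eta_0\Vert u_1^1\Vert\,E(du_1)$. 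One side effect of your detour through Lemma \ref{lemma:lemma12} is an extra $\eta_0'E(du_1)$ term: even if completed, your argument proves $\Vert u_1-u_{1h}\Vert\lesssim E(u_1)+(\eta_0+\eta_0')E(du_1)$, which is weaker than \eqref{eq:impest1} as stated (harmless for the paper's later uses, since $\alpha_0$ already absorbs $\eta_0'$, but not the claimed estimate, which the paper obtains without Lemma \ref{lemma:lemma12}).

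The genuine gap is the final step, which you yourself flag as the hard part and then only gesture at; moreover, the mechanism you name is not the correct one. Carrying out the duality for $\rho:=P_{\mathfrak{B}^*}(u_1-u_{1h})$, one sets $W:=(K_0)_2(d\rho,0)\in\mathfrak{B}^2$, checks $d^*W=\rho$, and uses $du_{1h}=P_{\mathfrak{B}_h}du_1$ to get $\Vert\rho\Vert^2=\langle d(u_1-u_{1h}),W\rangle=\langle(I-P_{\mathfrak{B}_h})du_1,(I-P_{\mathfrak{B}_h})W\rangle$. Closing the loop then requires an approximation bound of the form $\Vert(I-P_{\mathfrak{B}_h})W\Vert\lesssim\eta_0\Vert\rho\Vert$ on the dual \emph{velocity} $W$, a $2$-form; this is exactly the quantity $\Vert(I-\pi_h)(K_0)_2(du_1^1,0)\Vert$ that the paper estimates, and it is not controlled by the operator norm you invoke, $\Vert(I-\pi_h)d^*(K_0)_2\Vert$, which measures the approximability of the $1$-form $d^*W$, not of $W$ itself. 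In addition, pairing the consistency error against the discrete residual $\Vert de\Vert$ cannot be right: for the cochain projection of Remark \ref{rem:th37HodgeTheory} one has $de=\pi_h du_1-du_{1h}=P_{\mathfrak{B}_h}du_1-du_{1h}=0$ while $\rho\neq 0$ in general, so any estimate of the form $\Vert\rho\Vert\lesssim\eta_0\Vert de\Vert$ is false; the residual that actually appears is the projection-independent quantity $\Vert(I-P_{\mathfrak{B}_h})du_1\Vert=\Vert d(u_1-u_{1h})\Vert\lesssim E(du_1)$. Since the entire content of \eqref{eq:impest1}, namely the factor $\eta_0$ in front of $E(du_1)$, lives in precisely this step, the proposal is incomplete exactly where it matters, and as sketched the step would not go through.
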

\begin{proof}
We use the operator $K_0$ defined in Section \ref{Discretewellposedness} and recall the decomposition \eqref{eq:decompK}.
We write $u_1 := u_1^1 + u_1^2$ with $u_1^1$ and $u_1^2$ defined as follows:
\begin{equation*}
u_1 = d^* (K_0)_2(d u_1,0) = d^* (K_0)_2((I - P_{\mathfrak{B}_h}) d u_1,0) + d^* (K_0)_2(P_{\mathfrak{B}_h} d u_1,0) := u_1^1 + u_1^2.
\end{equation*}
Let $v_{1h} = \pi_h u_1^2 - u_{1h}$, we have:
\begin{equation*}
\begin{aligned}
d v_{1h} =\;& \pi_h d u_1^2 - d u_{1h}\\
=\;& \pi_h d d^* (K_0)_2 (P_{\mathfrak{B}_h} d u_1, 0) - d u_{1h}\\
=\;& \pi_h P_\mathfrak{B} P_{\mathfrak{B}_h} d u_1 - d u_{1h}\\
=\;& \frac{1}{\nu} (P_{\mathfrak{B}_h} f_2 - P_{\mathfrak{B}_h} f_2) \\
=\;& 0.
\end{aligned}
\end{equation*}
Moreover injecting $v_{1h}$ into \eqref{eq:pbB0} and \eqref{eq:discretelinearsteady} gives:
\[
\langle u_1 - u_{1h}, v_{1h} \rangle = \langle u_2 - u_{2h}, d v_{1h} \rangle = 0.
\]
Hence, we have:
\begin{equation*}
\begin{aligned}
\langle u_1 - u_{1h}, u_1 - u_{1h} \rangle =\;& \langle u_1 - u_{1h}, \pi_h u_1^2 - u_{1h} \rangle + \langle u_1 - u_{1h}, u_1 - \pi_h u_1^2 \rangle\\
=\;& 0 +  \langle u_1 - u_{1h}, u_1 - \pi_h u_1^2 \rangle\\
\leq\;& \Vert u_1 - u_{1h} \Vert \Vert u_1 - \pi_h u_1^2 \Vert
\end{aligned}
\end{equation*}
and
\begin{equation*}
\Vert u_1 - \pi_h u_1^2 \Vert \leq \Vert (I - \pi_h) u_1 \Vert + \Vert \pi_h u_1^1 \Vert \lesssim E(u_1) + \Vert \pi_h u_1^1 \Vert.
\end{equation*}
It remains to bound $\Vert \pi_h u_1^1 \Vert$, and since 
\[ u_1^1 = d^* (K_0)_2 (d u_1^1, 0) = d^* (K_0)_2((I - P_{\mathfrak{B}_h}) d u_1, 0),
\]
Remark \ref{rem:th37HodgeTheory} gives:
\begin{equation*}
\begin{aligned}
\langle u_1^1, u_1^1 \rangle =\;& \langle (K_0)_2 (d u_1^1,0), d u_1^1 \rangle \\
=\;& \langle (K_0)_2 (d u_1^1,0),(I - P_{\mathfrak{B}_h}) d u_1 \rangle \\
=\;& \langle (I - P_{\mathfrak{B}_h}) (K_0)_2 (d u_1^1,0),(I - P_{\mathfrak{B}_h}) d u_1 \rangle \\
\leq\;& \Vert (I - \pi_h) (K_0)_2 (d u_1^1,0)\Vert \Vert(I - \pi_h) d u_1 \Vert \\
\lesssim\;& \eta_0 \Vert  u_1^1 \Vert E(d u_1).
\end{aligned}
\end{equation*}
\end{proof}
\begin{lemma} \label{lemma:impest2}
The estimate \eqref{eq:impest2} holds.
\end{lemma}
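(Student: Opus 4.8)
The plan is to identify the quantities $du_1$ and $du_{1h}$ \emph{exactly}, rather than estimating them through a general interpolation argument. Testing the second equation of the continuous problem \eqref{eq:pbB0} against $v_2 \in \mathfrak{B}^2 = d(V^1)$ (so that $dv_2 = 0$), the term $\langle u_3, dv_2\rangle$ vanishes, and since $\phi_2 \in \mathfrak{H}^2$ is $L^2$-orthogonal to $\mathfrak{B}^2$ the term $\langle \phi_2, v_2\rangle$ drops as well. This leaves $\nu\langle du_1, v_2\rangle = \langle f_2, v_2\rangle = \langle P_\mathfrak{B} f_2, v_2\rangle$ for all $v_2 \in \mathfrak{B}^2$, and because $du_1 \in \mathfrak{B}^2$ we conclude $\nu\, du_1 = P_\mathfrak{B} f_2$. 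Repeating the computation in the discrete problem \eqref{eq:discretelinearsteady} with $v_{2h} \in \mathfrak{B}_h^2 = d(V_h^1)$, using $\phi_{2h} \perp \mathfrak{B}_h^2$, gives the discrete analogue $\nu\, du_{1h} = P_{\mathfrak{B}_h} f_2$.

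The key structural observation is that we work with a subcomplex, so $V_h^1 \subset V^1$ forces $\mathfrak{B}_h^2 = d(V_h^1) \subset d(V^1) = \mathfrak{B}^2$. Consequently $P_{\mathfrak{B}_h} P_\mathfrak{B} = P_{\mathfrak{B}_h}$, and applying $P_{\mathfrak{B}_h}$ to the continuous identity $\nu\, du_1 = P_\mathfrak{B} f_2$ yields $\nu\, P_{\mathfrak{B}_h} du_1 = P_{\mathfrak{B}_h} f_2 = \nu\, du_{1h}$. Hence $du_{1h} = P_{\mathfrak{B}_h} du_1$ exactly, so that
\[
\Vert du_1 - du_{1h} \Vert = \Vert (I - P_{\mathfrak{B}_h}) du_1 \Vert.
\]
The conclusion follows immediately from Remark \ref{rem:th37HodgeTheory}, which furnishes a cochain projection with $\Vert (I - P_{\mathfrak{B}_h}) du_1 \Vert = \Vert (I - \pi_h) du_1 \Vert \lesssim E(du_1)$.

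Since the identities for $du_1$ and $du_{1h}$ are exact, there is essentially no error analysis to perform here, and this is why \eqref{eq:impest2} has no $\eta_0$ factor unlike the neighbouring estimates. The only points requiring care are the correct choice of closed test functions so that the pressure and harmonic contributions cancel, and the commutation of projections afforded by the subcomplex inclusion $\mathfrak{B}_h^2 \subset \mathfrak{B}^2$. I expect the sole (minor) obstacle to be verifying that $du_1$ and $du_{1h}$ genuinely lie in $\mathfrak{B}^2$ and $\mathfrak{B}_h^2$ respectively, which is what lets the projection identities be read off directly; this is immediate from $du_1 = d(u_1)$ with $u_1 \in V^1$ and its discrete counterpart.
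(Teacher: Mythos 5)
Your proof is correct and takes essentially the same route as the paper: the paper's two-line argument, $d u_{1h} = P_{\mathfrak{B}_h} f_2 = P_{\mathfrak{B}_h} P_{\mathfrak{B}} f_2 = P_{\mathfrak{B}_h} d u_1$ followed by Remark \ref{rem:th37HodgeTheory}, is exactly your chain of identities, with the step you derive by testing against closed forms $v_2 \in \mathfrak{B}^2$ (and $v_{2h} \in \mathfrak{B}_h^2$) left implicit under the phrase ``using the Hodge decomposition of $f_2$''. Your version is slightly more careful in that you retain the factor $\nu$, which the paper silently normalizes away.
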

\begin{proof}
Using the Hodge decomposition of $f_2$ we have $d u_{1h} = P_{\mathfrak{B}_h} f_2 = P_{\mathfrak{B}_h} P_\mathfrak{B} f_2 = P_{\mathfrak{B}_h} d u_1$.
So by Remark \ref{rem:th37HodgeTheory} we have $\Vert d u_1 - d u_{1h} \Vert = \Vert (I - P_{\mathfrak{B}_h}) d u_1 \Vert \lesssim E(du_1) $.
\end{proof}
\begin{lemma}
The estimate \eqref{eq:impest3} holds.
\end{lemma}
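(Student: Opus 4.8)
The plan is to reduce the estimate to a purely discrete quantity and then exploit Galerkin orthogonality together with Lemma \ref{lemma:lemma12} to gain the factor $\eta_0'$. First I would split
\[
u_3 - u_{3h} = (I - P_h) u_3 + (P_h u_3 - u_{3h}),
\]
so that $\Vert (I - P_h) u_3 \Vert = E(u_3)$ accounts for the first term. Testing \eqref{eq:pbB0} and \eqref{eq:discretelinearsteady} against $\chi_3$ shows $u_3 \perp \mathfrak{H}^3$ and $u_{3h} \perp \mathfrak{H}^3_h$, hence $u_3 \in \mathfrak{B}^3$ and $u_{3h} \in \mathfrak{B}^3_h$. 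Since the harmonic $3$-forms lie in $V^3_h$ (they are constants), $\mathfrak{H}^3 = \mathfrak{H}^3_h$, so $P_{\mathfrak{H}_h} P_h u_3 = P_{\mathfrak{H}} u_3 = 0$ and the remaining term $e := P_h u_3 - u_{3h}$ lies in $\mathfrak{B}^3_h = d V^2_h$.

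Next, by the discrete Poincaré inequality I would choose $v_{2h} \in \mathfrak{Z}_h^{2\perp}$ with $d v_{2h} = e$ and $\Vert v_{2h} \Vert_{V^2} \lesssim \Vert e \Vert$. Because $P_h$ is the $L^2$-projection onto $V^3_h \ni d v_{2h}$, we get $\Vert e \Vert^2 = \langle e, d v_{2h}\rangle = \langle u_3 - u_{3h}, d v_{2h}\rangle$. Subtracting the equations of \eqref{eq:pbB0} and \eqref{eq:discretelinearsteady} obtained by testing with $v_2 = v_{2h} \in V^2_h \subset V^2$ yields the Galerkin identity
\[
\langle u_3 - u_{3h}, d v_{2h}\rangle = \nu \langle d u_1 - d u_{1h}, v_{2h}\rangle + \langle \phi_2 - \phi_{2h}, v_{2h}\rangle .
\]

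The crux is to show that each term on the right carries a factor $\eta_0'$. For this I would introduce the continuous coexact projection $v := P_{\mathfrak{B}^*} v_{2h}$, for which Lemma \ref{lemma:lemma12} gives $\Vert v_{2h} - v\Vert \le \eta_0' \Vert d v_{2h}\Vert = \eta_0' \Vert e\Vert$. Since $d u_1 - d u_{1h} \in \mathfrak{B}^2$ and $\phi_2, \phi_{2h} \in \mathfrak{Z}^2$ are all $L^2$-orthogonal to the continuous coexact form $v \in \mathfrak{B}^*_2$, I may replace $v_{2h}$ by $v_{2h} - v$ in both pairings at no cost, so that
\[
\Vert e\Vert^2 \lesssim \eta_0' \Vert e\Vert\,\bigl(\Vert d u_1 - d u_{1h}\Vert + \Vert \phi_2 - \phi_{2h}\Vert\bigr).
\]
Dividing by $\Vert e\Vert$ and invoking the already established \eqref{eq:impest2}, that is $\Vert d u_1 - d u_{1h}\Vert \lesssim E(d u_1)$, together with the harmonic-form estimate $\Vert \phi_2 - \phi_{2h}\Vert \lesssim E(\phi_2)$ from Theorem \ref{th:unpertubedestimate}, gives $\Vert e\Vert \lesssim \eta_0'(E(d u_1) + E(\phi_2))$ and hence \eqref{eq:impest3}. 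I expect the main obstacle to be precisely this gain of $\eta_0'$: one must recognise that the two error quantities tested against $v_{2h}$ live in subspaces orthogonal to the continuous coexact forms, so that only the small difference $v_{2h} - P_{\mathfrak{B}^*} v_{2h}$ survives, which is then controlled by Lemma \ref{lemma:lemma12}. A secondary point requiring care is the vanishing of the discrete harmonic component of the error, which rests on the identity $\mathfrak{H}^3 = \mathfrak{H}^3_h$.
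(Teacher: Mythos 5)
Your overall mechanism is the paper's: choose $v_{2h}$ in the discrete coexact space with $d v_{2h}$ equal to the discrete part of the error, use the Galerkin identity from the second equation, pass from $v_{2h}$ to $v_{2h} - P_{\mathfrak{B}^*} v_{2h}$ by continuous Hodge orthogonality, and gain the factor $\eta_0'$ from Lemma \ref{lemma:lemma12} together with \eqref{eq:impest2}. But two steps do not go through as written. The first is your reduction to $e := P_h u_3 - u_{3h} \in \mathfrak{B}^3_h$, which rests on ``$\mathfrak{H}^3 = \mathfrak{H}^3_h$ since harmonic $3$-forms are constants.'' That is a feature of the particular spaces of Section \ref{Spaces}, not of the abstract framework in which Theorem \ref{th:improvedestimates} is stated and consumed (Lemma \ref{lemma:estimatesforzh}): the axioms (subcomplex, bounded cochain projection, compactness) do not force $\mathfrak{H}^3 \subset V^3_h$, and in general one only controls the gap between $\mathfrak{H}$ and $\mathfrak{H}_h$ through $\mu_0$. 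The paper avoids the identification: it solves $d v_{2h} = P_{\mathfrak{B}_h} u_3 - u_{3h}$ (legitimate because $u_{3h} \perp \mathfrak{H}^3_h$, as you correctly observed) and accounts for the discrete harmonic component separately via $\Vert P_h u_3 - P_{\mathfrak{B}_h} u_3 \Vert = \Vert P_{\mathfrak{H}_h} u_3 \Vert \lesssim \mu_0\, E(u_3) \lesssim E(u_3)$.

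The second, and more serious, gap is your appeal to ``$\Vert \phi_2 - \phi_{2h} \Vert \lesssim E(\phi_2)$ from Theorem \ref{th:unpertubedestimate}.'' The theorem contains no such componentwise bound: it controls $\Vert \phi_2 - \phi_{2h} \Vert$ only by the sum of \emph{all} approximation errors, including $\inf_{v_1 \in V^1_h} \Vert u_1 - v_1 \Vert_{V^1}$ and $\inf_{v_2 \in V^2_h} \Vert u_2 - v_2 \Vert_{V^2}$ (the step $\Vert \phi_i - \chi_i \Vert \lesssim E(\phi_i)$ inside its proof concerns the best approximation $\chi_i$, not the discrete solution component $\phi_{2h}$). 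Inserting the bound that the theorem actually provides into your chain yields only the weaker estimate $\Vert u_3 - u_{3h} \Vert \lesssim E(u_3) + \eta_0'\bigl(E(d u_1) + E(\phi_2) + \inf_{v_1} \Vert u_1 - v_1 \Vert_{V^1} + \cdots\bigr)$, which is not \eqref{eq:impest3} as stated. The fix is exactly the paper's move: since $v_{2h} \in \mathfrak{B}^*_{2h} \perp \mathfrak{Z}_h \supset \mathfrak{H}^2_h \ni \phi_{2h}$, the discrete solution drops out entirely, $\langle \phi_2 - \phi_{2h}, v_{2h} \rangle = \langle \phi_2, v_{2h} \rangle = \langle \phi_2 - P_{\mathfrak{H}_h} \phi_2, v_{2h} - v_2 \rangle$, and the harmonic-gap estimate $\Vert \phi_2 - P_{\mathfrak{H}_h} \phi_2 \Vert \lesssim E(\phi_2)$ (from \cite{Arnold_2010}) then produces precisely the $\eta_0'\, E(\phi_2)$ term. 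With these two repairs your argument coincides with the paper's proof.
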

\begin{proof}
For any $v_{2h} \in V^2_h$ we have 
\[
\nu \langle d (u_1 - u_{1h}), v_{2h} \rangle - \langle u_3 - u_{3h}, d v_{2h} \rangle + \langle \phi_2 - \phi_{2h}, v_{2h} \rangle = 0.
\]
Since $u_{3h} \perp \mathfrak{H}_h^3$, 
there is 
$v_{2h} \in \mathfrak{B}_{2h}^*$ such that $d v_{2h} = P_{\mathfrak{B}_h} u_3 - u_{3h}$.
Let $v_2 = P_{\mathfrak{B}^*} v_{2h}$.
Using the orthogonality of the Hodge decomposition we have $\langle d (u_1 - u_{1h}), v_2 \rangle = 0$ and $\langle \phi_2 - P_{\mathfrak{H}_h} \phi_2,v_2 \rangle = 0$ 
(since $\mathfrak{H}_h \subset \mathfrak{Z}_h \subset \mathfrak{Z} \perp \mathfrak{B}^* \ni v_2$). \\
Moreover $\langle \phi_2 - \phi_{2h}, v_{2h} \rangle = \langle \phi_2, v_{2h} \rangle = \langle \phi_2 - P_{\mathfrak{H}_h} \phi_2, v_{2h} \rangle$, so it holds:
\begin{equation*}
\begin{aligned}
\langle u_3 - u_{3h}, P_{\mathfrak{B}_h} u_3 - u_{3h} \rangle 
=\;& \langle d(u_1 - u_{1h}), v_{2h} \rangle + \langle \phi_2 - \phi_{2h}, v_{2h} \rangle \\
=\;& \langle d(u_1 - u_{1h}), v_{2h} - v_2 \rangle + \langle \phi_2 - P_{\mathfrak{H}_h} \phi_{2}, v_{2h} - v_2\rangle \\
\leq\;&  ( \Vert d(u_1 - u_{1h}) \Vert + \Vert \phi_2 - P_{\mathfrak{H}_h} \phi_2 \Vert ) \Vert v_2 - v_{2h} \Vert \\
\lesssim\;& (E( d u_1) + E(\phi_2)) \eta_0' \Vert P_{\mathfrak{B}_h} u_3 - u_{3h} \Vert.
\end{aligned}
\end{equation*}
We used Lemma \ref{lemma:lemma12} for the last inequality. 
Hence we have:
\begin{equation*}
\begin{aligned}
\Vert P_{\mathfrak{B}_h} u_3 - u_{3h} \Vert^2 
=\;& \langle P_{\mathfrak{B}_h} u_3 - u_3 , P_{\mathfrak{B}_h} u_3 - u_{3h} \rangle + \langle u_3 - u_{3h} , P_{\mathfrak{B}_h} u_3 - u_{3h} \rangle\\
\lesssim\;& (\Vert P_{\mathfrak{B}_h} u_3 - u_3 \Vert + \eta_0' (E(d u_1) + E( \phi_2)) ) \Vert P_{\mathfrak{B}_h} u_3 - u_{3h} \Vert\\
\lesssim\;& (E(u_3) + \eta_0' (E(d u_1) + E(\phi_2)) ) \Vert P_{\mathfrak{B}_h} u_3 - u_{3h} \Vert.
\end{aligned}
\end{equation*}
Now, since $\Vert P_h u_3 - P_{\mathfrak{B}_h} u_3 \Vert = \Vert P_{\mathfrak{H}_h} u_3 \Vert \lesssim \mu_0 E(u_3)$ and $\mu_0 \lesssim 1$ we have:
\begin{equation*}
\begin{aligned}
\Vert u_3 - u_{3h} \Vert \lesssim\;& \Vert u_3 - P_h u_3 \Vert + \Vert P_h u_3 - P_{\mathfrak{B}_h} u_3 \Vert + \Vert P_{\mathfrak{B}_h} u_3 - u_{3h} \Vert \\
\lesssim\;& E(u_3) + \eta_0' (E(d u_1) + E(\phi_2)) .
\end{aligned}
\end{equation*}
\end{proof}
\begin{lemma}
The estimate \eqref{eq:impest4} holds.
\end{lemma}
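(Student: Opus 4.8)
The plan is to estimate the exact part of the velocity error by testing the first lines of \eqref{eq:pbB0} and \eqref{eq:discretelinearsteady} against a discrete coexact $1$-form. Since $P_{\mathfrak{B}_h}(u_2 - u_{2h}) \in \mathfrak{B}_h$, I would write $P_{\mathfrak{B}_h}(u_2 - u_{2h}) = d w_h$ with $w_h \in \mathfrak{B}_{1h}^*$, so that the discrete Poincar\'e inequality gives $\Vert w_h \Vert \lesssim \Vert d w_h \Vert = \Vert P_{\mathfrak{B}_h}(u_2 - u_{2h}) \Vert$. Subtracting the first equations of the continuous and discrete problems with the test function $w_h \in V_h^1 \subset V^1$ yields
\[
\langle u_1 - u_{1h}, w_h \rangle = \langle u_2 - u_{2h}, d w_h \rangle = \Vert P_{\mathfrak{B}_h}(u_2 - u_{2h}) \Vert^2 ,
\]
so the estimate reduces to showing $\langle u_1 - u_{1h}, w_h \rangle \lesssim \big(\eta_0 E(u_1) + (\eta_0^2 + \delta_0) E(d u_1)\big)\Vert w_h \Vert$; dividing by $\Vert P_{\mathfrak{B}_h}(u_2 - u_{2h}) \Vert$ then concludes.

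The key point is that $\langle u_1 - u_{1h}, w_h \rangle$ must not be bounded crudely: the naive $\Vert u_1 - u_{1h}\Vert\,\Vert w_h\Vert$ only recovers $E(u_1)$ without the gain of a factor $\eta_0$. Instead I would introduce the continuous coexact representative $w := P_{\mathfrak{B}^*} w_h \in \mathfrak{B}^*_1 \cap V^1$ and split $\langle u_1 - u_{1h}, w_h \rangle = \langle u_1 - u_{1h}, w_h - w \rangle + \langle u_1 - u_{1h}, w \rangle$. For the first term Lemma \ref{lemma:lemma12} gives $\Vert w_h - w \Vert \leq \eta_0' \Vert d w_h \Vert$, which combined with the already proved bound \eqref{eq:impest1}, $\Vert u_1 - u_{1h} \Vert \lesssim E(u_1) + \eta_0 E(d u_1)$, contributes $\lesssim \eta_0'\big(E(u_1) + \eta_0 E(d u_1)\big)\Vert w_h \Vert$, i.e.\ a term of order $\eta_0 E(u_1) + \eta_0^2 E(d u_1)$ once the approximation factor $\eta_0'$, which is of the same first order in $h$ as $\eta_0$, is accounted for.

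The duality gain lives in the second term. Since $w$ is coexact I would write $w = d^* \Theta$ for the exact $2$-form $\Theta \in \mathfrak{B}^2 \cap V^*_2$ of the associated dual solve, so that $\Vert \Theta \Vert \lesssim \Vert w \Vert \leq \Vert w_h \Vert$ and, by the regularity of the dual solution operator (encoded in $K_0$), $E(\Theta) \lesssim \delta_0 \Vert w_h \Vert$. Integrating by parts and using $d u_{1h} = P_{\mathfrak{B}_h} d u_1$ (Lemma \ref{lemma:impest2}), hence $d(u_1 - u_{1h}) = (I - P_{\mathfrak{B}_h}) d u_1 \perp \mathfrak{B}_h$, gives
\[
\langle u_1 - u_{1h}, w \rangle = \langle d(u_1 - u_{1h}), \Theta \rangle = \langle (I - P_{\mathfrak{B}_h}) d u_1,\, (I - P_{\mathfrak{B}_h}) \Theta \rangle \lesssim E(d u_1)\, \delta_0 \Vert w_h \Vert ,
\]
where I use \eqref{eq:impest2} for the first factor and $\Vert (I - P_{\mathfrak{B}_h}) \Theta \Vert \lesssim E(\Theta)$ (Remark \ref{rem:th37HodgeTheory}) for the second. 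Collecting the two contributions and dividing by $\Vert P_{\mathfrak{B}_h}(u_2 - u_{2h}) \Vert$ yields \eqref{eq:impest4}.

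The main obstacle is the sharp handling of the duality term: one must represent $w$ through the dual solution operator $K_0$ so that the relevant discrete approximation error is measured by $\delta_0$ (and not by a generic, only $O(h)$, best-approximation error $E(\Theta)$), and then carefully verify that every prefactor $\eta_0$, $\eta_0^2$, $\delta_0$ is genuinely $o(1)$, so that the error on the exact part of the velocity is of strictly higher order than the energy-norm error. The delicate part is this bookkeeping of the small constants $\eta_0'$, $\eta_0$ and $\delta_0$, following the organization of \cite[Theorem~3.11]{Arnold_2010}, rather than any single inequality.
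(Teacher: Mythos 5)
Your construction is, object for object, the paper's own proof in different notation: your $w_h$ (the element of $\mathfrak{B}^{*}_{1h}$ with $d w_h = e := P_{\mathfrak{B}_h}(u_2-u_{2h})$) is exactly the paper's $\psi_h = d_h^*(K_{0h})_2(e,0)$ (both lie in $\mathfrak{Z}_h^{1\perp}$ and have image $e$ under $d$, and $d$ is injective there), your $w = P_{\mathfrak{B}^*}w_h$ is the paper's $\psi = d^*(K_0)_2(e,0)$, and your $\Theta=(K_0)_2(e,0)$ is what the paper calls $w$. The identity $\Vert e\Vert^2 = \langle u_1-u_{1h}, w_h\rangle$, the splitting into a near-orthogonality term and a duality term, and the duality bound $\langle (I-P_{\mathfrak{B}_h})du_1,(I-P_{\mathfrak{B}_h})\Theta\rangle \lesssim E(du_1)\,\delta_0\,\Vert e\Vert$ via \eqref{eq:impest2} and Remark \ref{rem:th37HodgeTheory} all match the paper. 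One cosmetic slip: the correct bound is $E(\Theta)\le \Vert(I-\pi_h)(K_0)_2(e,0)\Vert \le \delta_0\Vert e\Vert = \delta_0\Vert dw_h\Vert$, not $\delta_0\Vert w_h\Vert$ (there is no inverse inequality $\Vert dw_h\Vert\lesssim\Vert w_h\Vert$); since you divide by $\Vert e\Vert$ at the end and $\Vert w_h\Vert\le c_p\Vert e\Vert$, this does not affect the conclusion.

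There is, however, one genuine gap: for the near-orthogonality term you invoke Lemma \ref{lemma:lemma12} to get $\Vert w-w_h\Vert\le\eta_0'\Vert dw_h\Vert$ and then assert that $\eta_0'$ is \emph{of the same first order in $h$ as $\eta_0$}. Nothing in the paper establishes $\eta_0'\lesssim\eta_0$: these are distinct quantities ($\eta_0'$ comes from the Arnold--Li projection $\Pi_h$ of Lemma \ref{lemma:PIh}, $\eta_0$ from Definition \ref{def:estimates}), and the author deliberately keeps them separate --- compare \eqref{eq:impest3}, which carries $\eta_0'$, with \eqref{eq:impest4}, which carries only $\eta_0$ and $\delta_0$, and note that $\alpha_0$ lists $\eta_0'$ as a separate summand. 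As written, your argument proves $\Vert P_{\mathfrak{B}_h}(u_2-u_{2h})\Vert \lesssim \eta_0' E(u_1) + (\eta_0'\eta_0+\delta_0)E(du_1)$, which is not the stated estimate (it would still suffice for the only downstream use, Lemma \ref{lemma:estimatesforzh}, since $\alpha_0$ already contains $\eta_0'$, but it is not the lemma as stated). The repair is already in your hands: you proved $w = d^*\Theta = d^*(K_0)_2(e,0)$, whence $\Vert(I-\pi_h)w\Vert \le \Vert(I-\pi_h)d^*(K_0)_2\Vert\,\Vert e\Vert \le \eta_0\Vert e\Vert$ by Definition \ref{def:estimates}; combining this with the \emph{first} inequality of Lemma \ref{lemma:lemma12}, $\Vert w-w_h\Vert\le\Vert(I-\pi_h)w\Vert$ (equivalently, with the paper's orthogonality observation $\pi_h w - w_h \perp w - w_h$, which follows from $d\pi_h w = \pi_h e = e = dw_h$ together with $w = d^*\Theta$ and $w_h = d_h^*\Theta_h$ for some $\Theta_h\in V_h^2$), gives $\Vert w-w_h\Vert\le\eta_0\Vert e\Vert$ and hence exactly $\eta_0 E(u_1) + (\eta_0^2+\delta_0)E(du_1)$ after combining with \eqref{eq:impest1} and dividing by $\Vert e\Vert$.
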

\begin{proof}
Let $e = P_{\mathfrak{B}_h}(u_2 - u_{2h})$, 
$w = (K_0)_2(e,0)$,
$\psi = d^* w$,
$w_h = (K_{0h})_2(e,0)$ and
$\psi_h = d_h^* w_h$.
First notice that 
\[ d \psi = dd^*(K_0)_2 (e,0) = P_\mathfrak{B} e = e = d \psi_h,
\]
so $d \psi \in V_h^2$ and $d \psi = \pi_h d \psi = d \pi_h \psi = d \psi_h$.
Moreover $\pi_h \psi - \psi_h \perp \psi - \psi_h$, indeed:
\begin{equation*}
\begin{aligned}
\langle \pi_h \psi - \psi_h, \psi \rangle &= \langle \pi_h \psi - \psi_h, d^* w \rangle = \langle d \pi_h \psi - d\psi_h, w \rangle = 0,\\
\langle \pi_h \psi - \psi_h, \psi_h \rangle &= \langle \pi_h \psi - \psi_h, d_h^* w_h \rangle = \langle d \pi_h \psi - d\psi_h, w_h \rangle = 0.
\end{aligned}
\end{equation*}
This allows us to derive the following bounds:
\begin{equation*}
\begin{aligned}
\langle \psi - \psi_h, \psi - \psi_h \rangle 
=&\; \langle \psi - \pi_h \psi, \psi - \psi_h \rangle + \langle \pi_h \psi - \psi_h, \psi - \psi_h\rangle\\
=&\; \langle \psi - \pi_h \psi, \psi - \psi_h \rangle\\
\leq&\; \Vert (I - \pi_h) \psi \Vert \Vert \psi - \psi_h \Vert,
\end{aligned}
\end{equation*}
\begin{equation*} 
\Vert \psi - \psi_h \Vert \leq \Vert (I - \pi_h) \psi \Vert = \Vert (I - \pi_h) d^*(K_0)_2(e,0) \Vert \leq \eta_0 \Vert e \Vert .
\end{equation*}
Finally, recalling that $d u_{1h} = P_{\mathfrak{B}_h} d u_1$ (Lemma \ref{lemma:impest2}), we have:
\begin{equation*}
\begin{aligned}
\Vert e \Vert^2 =\;& \langle d \psi_h, e \rangle = \langle d \psi_h, u_2 - u_{2h} \rangle = \langle u_1 - u_{1h}, \psi_h \rangle \\
=\;& \langle u_1 - u_{1h}, \psi_h - \psi \rangle + \langle u_1 - u_{1h}, \psi \rangle\\
=\;& \langle u_1 - u_{1h}, \psi_h - \psi \rangle + \langle du_1 - du_{1h}, w \rangle\\
=\;& \langle u_1 - u_{1h}, \psi_h - \psi \rangle + \langle (I - P_{\mathfrak{B}_h}) du_1, (I - P_{\mathfrak{B}_h}) w \rangle\\
\leq\;& \Vert u_1 - u_{1h} \Vert \Vert \psi_h - \psi \Vert + \Vert (I - P_{\mathfrak{B}_h}) du_1 \Vert \Vert (I - P_{\mathfrak{B}_h}) w \Vert\\
\lesssim\;& \Vert u_1 - u_{1h} \Vert \Vert \psi_h - \psi \Vert + \Vert (I - \pi_h) du_1 \Vert \Vert (I - \pi_h) w \Vert\\
\lesssim\;& \Vert u_1 - u_{1h} \Vert \eta_0 \Vert e \Vert + E(d u_1) \delta_0 \Vert e \Vert.\\
\end{aligned}
\end{equation*}
We conclude with the estimate \eqref{eq:impest1}.
\end{proof}
\begin{lemma}
The estimate \eqref{eq:impest6} holds.
\end{lemma}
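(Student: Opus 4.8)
The plan is to imitate the proof of estimate \eqref{eq:impest2} (Lemma \ref{lemma:impest2}): I first reduce the claim to the single identity $d u_{2h} = P_{\mathfrak{B}_h} d u_2$, where $\mathfrak{B}_h := d(V^2_h)$, and then close the argument with Remark \ref{rem:th37HodgeTheory}. Indeed, once that identity is available one has $d u_2 - d u_{2h} = (I - P_{\mathfrak{B}_h}) d u_2$, and since $u_2 \in V^2$, Remark \ref{rem:th37HodgeTheory} yields $\Vert (I - P_{\mathfrak{B}_h}) d u_2 \Vert = \Vert (I - \pi_h) d u_2 \Vert \lesssim E(d u_2)$, which is exactly \eqref{eq:impest6}.

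To obtain the identity, I would test both problems against their third component. Choosing $(v_1,v_2,v_3,\chi_2,\chi_3) = (0,0,v_3,0,0)$ in the continuous problem \eqref{eq:pbB0} gives $\langle d u_2 + \phi_3, v_3 \rangle = \langle f_3, v_3 \rangle$ for every $v_3 \in V^3$, and likewise the discrete problem \eqref{eq:discretelinearsteady} gives $\langle d u_{2h} + \phi_{3h}, v_{3h} \rangle = \langle f_3, v_{3h} \rangle$ for every $v_{3h} \in V^3_h$. Restricting the continuous relation to $v_{3h} \in V^3_h \subset V^3$ and subtracting yields the Galerkin orthogonality $\langle (d u_2 + \phi_3) - (d u_{2h} + \phi_{3h}), v_{3h} \rangle = 0$ for all $v_{3h} \in V^3_h$.

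The final step is to test this orthogonality against $v_{3h} \in \mathfrak{B}_h \subset V^3_h$. The harmonic contributions then vanish: $\phi_{3h} \in \mathfrak{H}^3_h$ is orthogonal to $\mathfrak{B}_h$ by the discrete Hodge decomposition, while $\phi_3 \in \mathfrak{H}^3 = (\mathfrak{B}^3)^\perp$ is orthogonal to $\mathfrak{B}^3 \supset \mathfrak{B}_h$ (using $V^2_h \subset V^2$). What remains is $\langle d u_2 - d u_{2h}, v_{3h} \rangle = 0$ for all $v_{3h} \in \mathfrak{B}_h$, and since $d u_{2h} \in \mathfrak{B}_h$ this is precisely the statement $d u_{2h} = P_{\mathfrak{B}_h} d u_2$. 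The only point requiring care, and the sole minor obstacle, is justifying that both harmonic forms annihilate $\mathfrak{B}_h$; this hinges on the inclusion $\mathfrak{B}_h \subset \mathfrak{B}^3$, which lets the continuous harmonic form $\phi_3$ be discarded even though it is not a discrete form.
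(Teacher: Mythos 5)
Your proof is correct and takes essentially the same route as the paper: both arguments reduce the claim to the identity $d u_{2h} = P_{\mathfrak{B}_h} d u_2$ and then conclude with Remark \ref{rem:th37HodgeTheory}. The paper reads this identity off the Hodge decomposition of the data, $d u_2 = P_{\mathfrak{B}} f_3$ and $d u_{2h} = P_{\mathfrak{B}_h} f_3 = P_{\mathfrak{B}_h} d u_2$, whereas you derive it via Galerkin orthogonality and the vanishing of the harmonic contributions (using $\mathfrak{B}_h \subset \mathfrak{B}^3$) --- a trivially equivalent derivation.
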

\begin{proof}
We know that $du_2 = P_{\mathfrak{B}} f_3$ and $d u_{2h} = P_{\mathfrak{B}_h} f_3 = P_{\mathfrak{B}_h} d u_2$,
hence 
\[\Vert d u_2 - d u_{2h} \Vert = \Vert (I - P_{\mathfrak{B}_h}) d u_2 \Vert \lesssim E(d u_2). 
\]
\end{proof}
\begin{lemma}
The estimate \eqref{eq:impest5} holds.
\end{lemma}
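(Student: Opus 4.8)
The plan is to control the discrete coexact part of the velocity error by the discrete Poincaré inequality, after inserting a carefully chosen discrete approximation of $u_2$. First I would fix an arbitrary $\tau_h \in V^2_h$ and split
\[ P_{\mathfrak{B}^*_h}(u_2 - u_{2h}) = P_{\mathfrak{B}^*_h}(u_2 - \tau_h) + P_{\mathfrak{B}^*_h}(\tau_h - u_{2h}). \]
Since the $L^2$-orthogonal projection $P_{\mathfrak{B}^*_h}$ is a contraction, the first term is bounded by $\Vert u_2 - \tau_h \Vert$.

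For the second term, I would use that $\tau_h - u_{2h} \in V^2_h$, so $P_{\mathfrak{B}^*_h}(\tau_h - u_{2h}) \in \mathfrak{B}^*_h \subset \mathfrak{Z}_h^\perp$, and the discrete Poincaré inequality (which holds with the constant $c_p$) gives $\Vert P_{\mathfrak{B}^*_h}(\tau_h - u_{2h}) \Vert \lesssim \Vert d\, P_{\mathfrak{B}^*_h}(\tau_h - u_{2h}) \Vert$. Because $d$ annihilates the $\mathfrak{Z}_h$-component of any element of $V^2_h$, I have $d\, P_{\mathfrak{B}^*_h}(\tau_h - u_{2h}) = d(\tau_h - u_{2h})$, and a triangle inequality yields $\Vert d(\tau_h - u_{2h}) \Vert \leq \Vert d(u_2 - \tau_h) \Vert + \Vert d u_2 - d u_{2h} \Vert$, whose last term is $\lesssim E(d u_2)$ by the already established estimate \eqref{eq:impest6}.

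The decisive step is the choice $\tau_h = \pi_h u_2$, where $\pi_h$ is the bounded commuting cochain projection of \cite{Arnold_2010}. Its uniform $L^2$-boundedness gives $\Vert u_2 - \pi_h u_2 \Vert \lesssim E(u_2)$, while the commuting property $d \pi_h = \pi_h d$ turns $\Vert d(u_2 - \pi_h u_2) \Vert$ into $\Vert (I - \pi_h) d u_2 \Vert \lesssim E(d u_2)$. Substituting these two bounds into the two pieces above gives $\Vert P_{\mathfrak{B}^*_h}(u_2 - u_{2h}) \Vert \lesssim E(u_2) + E(d u_2)$, which is the claim.

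The one genuinely delicate point is that a single approximant $\tau_h$ must be simultaneously close to $u_2$ in $L^2$ and have $d \tau_h$ close to $d u_2$: the plain $L^2$-best approximation controls only the former, so it is essential to take $\tau_h$ to be the commuting cochain projection rather than an arbitrary element of $V^2_h$.
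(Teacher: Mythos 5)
Your proof is correct and follows essentially the same route as the paper's: the paper likewise sets $v_{2h} = P_{\mathfrak{B}^*_h}(\pi_h u_2 - u_{2h})$, bounds it by the discrete Poincar\'e inequality together with the commuting property $d\pi_h = \pi_h d$ and the already proved estimate \eqref{eq:impest6}, and handles the remaining piece $P_{\mathfrak{B}^*_h}(u_2 - \pi_h u_2)$ by the contractivity of the projection and the uniform boundedness of $\pi_h$. The only difference is presentational: you introduce a general approximant $\tau_h$ and then specialize to $\tau_h = \pi_h u_2$, whereas the paper fixes this choice from the outset.
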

\begin{proof}
Let $v_{2h} = P_{\mathfrak{B}_h^*} (\pi_h u_2 - u_{2h} )$.
The result follows from triangular inequalities, Poincaré inequalities and \eqref{eq:impest6} since:
\begin{equation*}
\begin{aligned}
\Vert v_{2h} \Vert \leq\;& c_p \Vert d (\pi_h u_2 - u_{2h}) \Vert \\
\leq\;& c_p \Vert \pi_h d u_2 - d u_2 + d u_2 - d u_{2h} \Vert\\
\leq\;& c_p \Vert (I - \pi_h) d u_2 \Vert + c_p \Vert d u_2 - d u_{2h} \Vert \\
\lesssim\;& E(d u_2).
\end{aligned}
\end{equation*}
Hence: 
\begin{equation*}
\begin{aligned}
\Vert P_{\mathfrak{B}_h^*} ( u_2 - u_{2h}) \Vert 
\leq\;& \Vert P_{\mathfrak{B}_h^*} (u_2 - \pi_h u_2) \Vert + \Vert v_{2h} \Vert \\
\lesssim\;& E(u_2) + E(d u_2).
\end{aligned}
\end{equation*}
\mbox{}
\end{proof}
\begin{corollary}
Keeping the notation of Theorem \ref{th:improvedestimates}, it holds:
\begin{equation*}
\Vert u_2 - u_{2h} \Vert \lesssim E(u_2) + E (d u_2) + \eta_0 E( u_1) + (\eta_0^2 + \delta_0) E(d u_1) .
\end{equation*}
\end{corollary}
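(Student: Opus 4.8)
The plan is to measure the full $L^2$ error $\Vert u_2 - u_{2h}\Vert$ by decomposing it along the discrete Hodge decomposition of $V_h^2$, supplemented by the $L^2$-orthogonal complement of $V_h^2$ in $W^2$, and then to read off three of the four resulting pieces directly from Theorem~\ref{th:improvedestimates}. Concretely, since $u_{2h}\in V_h^2$ we have $(I-P_h)u_{2h}=0$, and since $\mathfrak{B}_h \operp \mathfrak{H}_h \operp \mathfrak{B}^*_h = V_h^2$, I would write
\[
u_2 - u_{2h} = (I-P_h)u_2 + P_{\mathfrak{B}_h}(u_2-u_{2h}) + P_{\mathfrak{H}_h}(u_2-u_{2h}) + P_{\mathfrak{B}^*_h}(u_2-u_{2h}).
\]
These four summands are mutually $L^2$-orthogonal: the first lies in the orthogonal complement of $V_h^2$, while the other three are the blocks of the discrete Hodge decomposition. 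Hence $\Vert u_2-u_{2h}\Vert$ is controlled by the sum of the four individual norms, and it suffices to bound each block separately.

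Two of the blocks are exactly the content of the improved estimates: \eqref{eq:impest4} bounds $\Vert P_{\mathfrak{B}_h}(u_2-u_{2h})\Vert$ by $\eta_0 E(u_1)+(\eta_0^2+\delta_0)E(du_1)$, and \eqref{eq:impest5} bounds $\Vert P_{\mathfrak{B}^*_h}(u_2-u_{2h})\Vert$ by $E(u_2)+E(du_2)$. The complement block is immediate from the definition of the $L^2$-projection, namely $\Vert (I-P_h)u_2\Vert = E(u_2)$. Adding these three contributions already reproduces precisely the right-hand side claimed in the corollary, so no fresh analysis is required for them.

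The only block not handed to us by Theorem~\ref{th:improvedestimates} is the discrete harmonic part $P_{\mathfrak{H}_h}(u_2-u_{2h})$, and this is where I expect the single genuine subtlety to sit. Testing \eqref{eq:pbB0} against $\chi_2\in\mathfrak{H}^2$ and \eqref{eq:discretelinearsteady} against $\chi_{2h}\in\mathfrak{H}^2_h$ forces $P_{\mathfrak{H}}u_2=0$ and $P_{\mathfrak{H}_h}u_{2h}=0$, whence $P_{\mathfrak{H}_h}(u_2-u_{2h})=P_{\mathfrak{H}_h}u_2$: the discrete harmonic projection of a field that is \emph{continuously} orthogonal to the harmonic forms. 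Applying Lemma~\ref{lemma:erroronharmonics} to $u_2\in\mathfrak{H}^{\perp}$ gives $\Vert P_{\mathfrak{H}_h}u_2\Vert \lesssim \mu_0\Vert u_2\Vert$, and tracing that lemma's proof, where the auxiliary quantity $d\sigma$ equals $P_{\mathfrak{B}}u_2$, sharpens this to $\mu_0 E(P_{\mathfrak{B}}u_2)$. Since $\mu_0\lesssim 1$ and $\mu_0\to0$ as $h\to0$, this harmonic contribution is subdominant and gets folded into the $E(u_2)$ term already present (matching the $(1+\mu_0)E(u_2)$ factor that later surfaces in Lemma~\ref{lemma:estimatesforzh}); in the typical application $\mathfrak{H}^2=\{0\}$ and the block simply vanishes. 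The main obstacle is therefore one of careful bookkeeping rather than of analysis: correctly isolating the harmonic block, justifying that the continuous and discrete solutions are orthogonal to their respective harmonic spaces, and checking that its $\mu_0$-order size is absorbed into the stated bound, after which collecting the four blocks yields the estimate.
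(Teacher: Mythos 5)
Your proposal is correct and follows essentially the same route as the paper: the paper's proof is exactly the triangle inequality over the four blocks $u_2 - P_h u_2$, $P_{\mathfrak{B}_h}(u_2-u_{2h})$, $P_{\mathfrak{H}_h}u_2$ and $P_{\mathfrak{B}_h^*}(u_2-u_{2h})$, bounded respectively by $E(u_2)$, estimate \eqref{eq:impest4}, Lemma~\ref{lemma:erroronharmonics} (with the $\mu_0$ factor absorbed since $\mu_0\lesssim 1$), and estimate \eqref{eq:impest5}. Your only additions --- the mutual $L^2$-orthogonality of the four blocks and the explicit verification that $P_{\mathfrak{H}}u_2=0$ and $P_{\mathfrak{H}_h}u_{2h}=0$ from the variational formulations --- are steps the paper uses implicitly.
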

\begin{proof}
This is a direct consequence of the estimates \eqref{eq:impest4}, \eqref{eq:impest5} and Lemma \ref{lemma:erroronharmonics}.
We simply write:
\begin{equation*}
\begin{aligned}
\Vert u_2 - u_{2h} \Vert \leq\;& \Vert u_2 - P_h u_2 \Vert + \Vert P_{\mathfrak{B}_h}(u_2 - u_{2h}) \Vert + \Vert P_{\mathfrak{H}_h} u_2 \Vert + \Vert P_{\mathfrak{B}_h^*}(u_2 - u_{2h})\Vert \\
\lesssim\;& E(u_2) + \eta_0 E(u_1) +(\eta_0^2 + \delta_0) E(d u_1) + \mu_0 E(u_2) + E(u_2) + E(d u_2).
\end{aligned}
\end{equation*}
\mbox{}
\end{proof}

\section{Time step independent estimates}
When we apply the scheme to solve iteratively a problem
some details are not accounted for in the estimate of Corollary \ref{corollary:pertubedestimate}.
Indeed $\Vert l_5\Vert$ and $\Vert f_2 \Vert$ depend on $\delta t$, 
and the linear maps $l_3$ and $l_5$ are not the same in the continuous and in the discrete problem.

Here we provide an estimate independent of $\delta t$,
and with two different sets of linear maps for a single step in time.
More precisely let $(u_1,u_2,u_3,u_p)$ be the solution of the continuous problem \eqref{eq:defmixedcontinuous} for
$f_2 := \frac{1}{\delta t} f^c + g$, $f_3 := 0$, $l_3 := l_3^c$, $l_5 := \frac{1}{\delta t} I + l_5^c$.
And let $(u_{1h},u_{2h},u_{3h},u_{ph})$ be the solution of the discrete problem \eqref{eq:defmixeddiscrete} for 
$f_2 := \frac{1}{\delta t} f^d + g$, $f_3 := 0$, $l_3 := l_3^d$, $l_5 := \frac{1}{\delta t} I + l_5^d$.
For $i \in \lbrace 1,2,3 \rbrace$ we define $e_i := u_i - u_{ih}$ 
as well as $e_f := f^c - f^d$, $e_{l_3} := l_3^c - l_3^d$ and $e_{l_5} := l_5^c - l_5^d$.
\begin{theorem} \label{th:timeindeest}
There exists $\delta t_0 > 0$ depending only on $\Vert l_3^d\Vert$, $\Vert l_5^d\Vert$ and $\nu$ 
such that $\forall \delta t := \alpha\, \delta t_0$ with $0 < \alpha < 1$, 
\begin{equation*}
\begin{aligned}
\Vert P_{\mathfrak{Z}_h} e_2 \Vert \leq&\; 
 \frac{\delta t}{1 - \alpha} \bigg( \Vert l_5^d \Vert \Vert (I - P_{\mathfrak{Z}_h}) u_2 \Vert + \Vert l_3^d \Vert \Vert (I - P_h) u_1 \Vert  \\
&\quad\  + \Vert e_{l_3} \Vert \Vert u_1 \Vert + \Vert e_{l_5} \Vert \Vert u_2 \Vert \bigg) + \frac{1}{1 - \alpha} \Vert e_f \Vert ,\\
\Vert e_2 \Vert \lesssim&\; \Vert P_{\mathfrak{Z}_h} e_2 \Vert + E(u_2) .
\end{aligned}
\end{equation*}
\end{theorem}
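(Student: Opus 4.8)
The plan is to project the error onto the discrete divergence-free subspace $\mathfrak{Z}^2_h$ and exploit that, for small $\delta t$, the mass term $\tfrac1{\delta t}\langle\cdot,\cdot\rangle$ dominates the whole momentum operator. Since $f_3=0$, both $u_2$ and $u_{2h}$ are exactly divergence free, so $u_{2h}\in\mathfrak{Z}^2_h$ and $w:=P_{\mathfrak{Z}_h}e_2=P_{\mathfrak{Z}_h}u_2-u_{2h}\in\mathfrak{Z}^2_h$. First I would test the discrete momentum equation \eqref{eq:defmixeddiscrete} with $v_2=w$ (and $v_1=v_3=v_p=0$); as $dw=0$ the pressure term $\langle u_{3h},dw\rangle$ drops, and writing $u_{1h}=d_h^*u_{2h}$ the equation reduces on $\mathfrak{Z}^2_h$ to the bilinear form $a_h(v,v'):=\nu\langle d_h^*v,d_h^*v'\rangle+\langle l_3^d d_h^*v,v'\rangle+\tfrac1{\delta t}\langle v,v'\rangle+\langle l_5^d v,v'\rangle$. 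The second inequality of the statement is then immediate: $e_2=w+(I-P_{\mathfrak{Z}_h})u_2$ because $u_{2h}\in\mathfrak{Z}^2_h$, and $\|(I-P_{\mathfrak{Z}_h})u_2\|\le\|u_2-\pi_h u_2\|\lesssim E(u_2)$ since $\pi_h u_2\in\mathfrak{Z}^2_h$ (as $du_2=0$); so only the bound on $\|w\|$ remains.

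Next I would establish coercivity of $a_h$ on $\mathfrak{Z}^2_h$. Bounding the $l_3^d$ term with Young's inequality, $|\langle l_3^d d_h^*v,v\rangle|\le\tfrac\nu2\|d_h^*v\|^2+\tfrac{\|l_3^d\|^2}{2\nu}\|v\|^2$, gives
\[
a_h(v,v)\ \ge\ \tfrac\nu2\|d_h^*v\|^2+\Big(\tfrac1{\delta t}-\|l_5^d\|-\tfrac{\|l_3^d\|^2}{2\nu}\Big)\|v\|^2 .
\]
Setting $\delta t_0:=\big(\|l_5^d\|+\tfrac{\|l_3^d\|^2}{2\nu}\big)^{-1}$, which depends only on $\|l_3^d\|$, $\|l_5^d\|$ and $\nu$, and taking $\delta t=\alpha\,\delta t_0$ with $0<\alpha<1$, the coefficient becomes exactly $\tfrac1{\delta t}-\tfrac1{\delta t_0}=\tfrac{1-\alpha}{\delta t}$, so $a_h(w,w)\ge\tfrac{1-\alpha}{\delta t}\|w\|^2$. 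This is the origin of the $\tfrac{\delta t}{1-\alpha}$ prefactor.

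The heart of the argument is the consistency estimate for $a_h(w,w)=a_h(P_{\mathfrak{Z}_h}u_2,w)-\big(\tfrac1{\delta t}\langle f^d,w\rangle+\langle g,w\rangle\big)$. I would evaluate $a_h(P_{\mathfrak{Z}_h}u_2,w)$ with the relation $d_h^*P_{\mathfrak{Z}_h}u_2=P_h u_1$, obtained by testing the first line of \eqref{eq:defmixedcontinuous} against $\tau\in V^1_h$, and eliminate the discrete data using the continuous momentum equation tested with $w$ (the pressure drops again since $dw=0$), after peeling off $\tfrac1{\delta t}\langle e_f,w\rangle$. Matching term by term, the mass contribution $\tfrac1{\delta t}\langle(I-P_{\mathfrak{Z}_h})u_2,w\rangle$ vanishes because $(I-P_{\mathfrak{Z}_h})u_2\perp\mathfrak{Z}^2_h\ni w$; the zeroth-order contributions produce $\langle l_5^d(I-P_{\mathfrak{Z}_h})u_2,w\rangle$ and $\langle l_3^d(I-P_h)u_1,w\rangle$ together with the operator-mismatch terms $\langle e_{l_3}u_1,w\rangle$ and $\langle e_{l_5}u_2,w\rangle$, each bounded by Cauchy--Schwarz to give precisely the four terms in the statement. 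Dividing by $\tfrac{1-\alpha}{\delta t}\|w\|$ then yields the first inequality.

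I expect the main obstacle to be the viscous (curl--curl) consistency term, $\nu\big(\langle d_h^*P_{\mathfrak{Z}_h}u_2,d_h^*w\rangle-\langle du_1,w\rangle\big)=-\nu\langle d(I-P_h)u_1,w\rangle$: unlike the mass term it is not killed by $L^2$-orthogonality, and it cannot be integrated by parts since $w\in V^2_h\not\subset V^*_2$. The resolution I would pursue uses $w\in\mathfrak{B}^2_h$ together with the commuting relation $d\pi_h=P_{\mathfrak{B}_h}d$, which gives $\langle du_1,w\rangle=\langle P_{\mathfrak{B}_h}du_1,w\rangle=\langle d\pi_h u_1,w\rangle=\langle\pi_h u_1,d_h^*w\rangle$, so that matching the continuous and discrete curl--curl contributions via the commuting cochain projection makes this term cancel and no viscous error survives in the final bound. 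Controlling this curl--curl consistency exactly --- rather than absorbing it into the $\tfrac\nu2\|d_h^*w\|^2$ part of the coercivity, which would leave an unwanted $\nu$-weighted residual incompatible with the stated linear dependence on $\delta t$ --- is the delicate point of the proof.
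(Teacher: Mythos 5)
Your proposal reproduces the paper's architecture faithfully: the same error equations, the same test function $w=P_{\mathfrak{Z}_h}e_2$ with the pressure killed by $dw=0$, the same Young split of the $l_3^d$ term against $\tfrac{\nu}{2}\Vert d_h^* w\Vert^2$, the identical threshold $\delta t_0=\bigl(\Vert l_5^d\Vert+\Vert l_3^d\Vert^2/(2\nu)\bigr)^{-1}$ with coefficient $\tfrac{1-\alpha}{\delta t}$, the observations $(I-P_h)e_1=(I-P_h)u_1$ and $(I-P_{\mathfrak{Z}_h})e_2=(I-P_{\mathfrak{Z}_h})u_2$, and an equivalent route to the second inequality. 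But the step you yourself single out as the delicate point is where the proposal fails: the claimed exact cancellation of the curl--curl consistency term is false.

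Granting $w\in\mathfrak{B}^2_h$ and a cochain projection with $\pi_h d=P_{\mathfrak{B}_h}d$, your chain $\langle du_1,w\rangle=\langle P_{\mathfrak{B}_h}du_1,w\rangle=\langle d\pi_h u_1,w\rangle=\langle\pi_h u_1,d_h^*w\rangle$ is correct. However, the discrete curl--curl contribution you must match it against is
\[
\nu\langle d_h^* P_{\mathfrak{Z}_h}u_2,\,d_h^* w\rangle=\nu\langle P_h u_1,\,d_h^* w\rangle=\nu\langle u_1,\,d_h^* w\rangle,
\]
using $d_h^* P_{\mathfrak{Z}_h}u_2=P_h u_1$ and $d_h^* w\in V^1_h$. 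The mismatch is therefore
\[
\nu\langle u_1-\pi_h u_1,\,d_h^* w\rangle=\nu\langle (I-\pi_h)u_1,\,d_h^* w\rangle,
\]
which does not vanish: $\pi_h$ is a commuting projection, not the $L^2$-orthogonal one, so $(I-\pi_h)u_1$ is not orthogonal to $V^1_h$. Equivalently, your residual $-\nu\langle d(I-P_h)u_1,w\rangle$ equals $\nu\langle(I-\pi_h)u_1,\,P_h e_1\rangle$ (since $d_h^*w=P_he_1$), a term of size $\nu\,E(u_1)\,\Vert P_h e_1\Vert$. Absorbing it into the leftover $\tfrac{\nu}{2}\Vert d_h^*w\Vert^2$ of your coercivity produces precisely the $\nu$-weighted residual, of order $\sqrt{\nu\,\delta t/(1-\alpha)}\,\Vert(I-\pi_h)u_1\Vert$, that you correctly said is incompatible with the stated bound; the commuting-projection device does not remove it. A secondary issue: $w=P_{\mathfrak{Z}_h}e_2$ lies only in $\mathfrak{Z}^2_h=\mathfrak{B}^2_h\operp\mathfrak{H}^2_h$, so your use of $w\in\mathfrak{B}^2_h$ needs $\mathfrak{H}^2_h=\lbrace 0\rbrace$ or a separate argument for the discrete harmonic part.

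For comparison, the paper does not introduce your device at all: it tests the first error equation of \eqref{eq:thestimateindepdt1} with $v_{1h}=P_h e_1$ and $v_{1h}=d_h^* P_{\mathfrak{Z}_h}e_2$ to obtain \eqref{eq:thestimateindepdt2}, i.e.\ $P_h e_1=d_h^* P_{\mathfrak{Z}_h}e_2$ and $\langle dP_he_1,P_{\mathfrak{Z}_h}e_2\rangle=\Vert P_he_1\Vert^2$, and then in passing to \eqref{eq:thestimateindepdt3} replaces the viscous pairing $\nu\langle de_1,P_{\mathfrak{Z}_h}e_2\rangle$ by $\nu\Vert P_he_1\Vert^2$; the gap between the two is exactly the commutation defect $\nu\langle d(I-P_h)u_1,P_{\mathfrak{Z}_h}e_2\rangle$ you isolated, which \eqref{eq:thestimateindepdt2} controls only after $e_1$ has been replaced by $P_he_1$. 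So you have located the genuinely delicate term of this proof more explicitly than the paper's own terse treatment does, but the repair you propose does not close it: as it stands, your argument (and the stated estimate) would carry an additional additive term of the type $\sqrt{\nu\,\delta t}\,E(u_1)$ unless the vanishing of this defect is justified by some further structure.
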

\begin{proof}
Subtracting \eqref{eq:defmixeddiscrete} from \eqref{eq:defmixedcontinuous} gives: 
\begin{equation} \label{eq:thestimateindepdt1}
\begin{aligned}
\langle e_1, v_{1h} \rangle =& \langle e_2, d v_{1h} \rangle,\\
\langle \nu d e_1 + e_{l_3} u_1 + l_3^d e_1 + e_{l_5} u_2 + l_5^d e_2 + \frac{1}{\delta t} e_2, v_{2h} \rangle =& \langle e_3, d v_{2h} \rangle + \langle \frac{1}{\delta t} e_f, v_{2h} \rangle.
\end{aligned}
\end{equation}
We take respectively $v_{1h} = P_h e_1$ and $v_{1h} = d_h^* P_{\mathfrak{Z}_h} e_2$ in \eqref{eq:thestimateindepdt1} to get:
\begin{equation} \label{eq:thestimateindepdt2}
\begin{aligned}
\Vert P_h e_1 \Vert^2 = \langle e_1, P_h e_1 \rangle =& \langle e_2, d P_h e_1 \rangle = \langle P_{\mathfrak{Z}_h} e_2, d P_h e_1 \rangle,\\
\langle d P_h e_1, P_{\mathfrak{Z}_h} e_2 \rangle 
= \langle e_1, d_h^* P_{\mathfrak{Z}_h} e_2 \rangle =& \langle e_2, d d_h^* P_{\mathfrak{Z}_h} e_2 \rangle  = \Vert d_h^* P_{\mathfrak{Z}_h} e_2 \Vert^2.
\end{aligned}
\end{equation}
Taking $v_{2h} = P_{\mathfrak{Z}_h} e_2$ in \eqref{eq:thestimateindepdt1}
and making use of \eqref{eq:thestimateindepdt2}, $d P_{\mathfrak{Z}_h} e_2 = 0$ and \\ 
$\vert \langle l_3^d P_h e_1, P_{\mathfrak{Z}_h} e_2 \rangle \vert \leq \frac{\nu}{2} \Vert P_h e_1 \Vert^2 + \frac{\Vert l_3^d \Vert^2}{2 \nu} \Vert P_{\mathfrak{Z}_h} e_2\Vert^2$, 
we get:
\begin{equation} \label{eq:thestimateindepdt3}
\begin{gathered}
\frac{\nu}{2} \Vert P_h e_1 \Vert^2 + \left ( \frac{1}{\delta t} - \frac{\Vert l_3^d \Vert^2}{2 \nu} - \Vert l_5^d\Vert \right ) \Vert P_{\mathfrak{Z}_h} e_2 \Vert^2 
\leq \\
\left ( \Vert l_5^d \Vert \Vert (I - P_{\mathfrak{Z}_h}) e_2 \Vert + \Vert l_3^d \Vert \Vert ( I - P_h) e_1 \Vert
+ \Vert e_{l_3} \Vert \Vert u_1 \Vert + \Vert e_{l_5} \Vert \Vert u_2 \Vert + \frac{1}{\delta t} \Vert e_f \Vert 
\right ) \Vert P_{\mathfrak{Z}_h} e_2 \Vert .
\end{gathered}
\end{equation}
Let 
\[\delta t_0 := \frac{2 \nu}{\Vert l_3^d \Vert^2 + 2 \nu \Vert l_5^d\Vert}. \]
We conclude since 
it holds $(I - P_h) e_1 = (I - P_h) u_1$,
and since $u_{2h} \in \mathfrak{Z}_h$, so $(I - P_{\mathfrak{Z}_h}) e_2 = (I - P_{\mathfrak{Z}_h}) u_2$. 
Moreover $u_2 \in \mathfrak{Z}$ and $u_{2h} \in \mathfrak{Z}_h \subset \mathfrak{Z}$, so
\[\Vert e_2 \Vert = \Vert P_{\mathfrak{Z}} e_2 \Vert \leq \Vert (P_{\mathfrak{Z}} - P_{\mathfrak{Z}_h}) e_2 \Vert + \Vert P_{\mathfrak{Z}_h} e_2 \Vert ,\]
\[\Vert (P_{\mathfrak{Z} } - P_{\mathfrak{Z}_h}) e_2 \Vert = \Vert (P_{\mathfrak{Z}} - P_{\mathfrak{Z}_h}) u_2 \Vert \lesssim \Vert (I - \pi_h) P_{\mathfrak{Z}} u_2 \Vert 
\lesssim E(P_{\mathfrak{Z}} u_2) = E(u_2) .\]
\end{proof}

\section*{Acknowledgements} I thank Pascal Azerad for his
comments and helpful advice.
I also thank Kaibo Hu for his comments and suggestions
and the anonymous referees for many helpful remarks.

\printbibliography

\end{document}